\numberwithin{equation}{section}
\newcommand{\mathset}[1]{\mathbbm{#1}}
\newcommand{\morf}[4][\to]{ #2 \colon #3 #1 #4}
\newcommand{\F}{\mathcal{F}}
\newcommand{\Fi}{\mathbb{F}}
\newcommand{\V}{\mathcal{V}}
\newcommand{\A}{\textbf{A}}
\newcommand{\s}{\mathscr S}
\newcommand{\B}{\mathscr B}
\newcommand{\N}{\mathset{N}}
\newcommand{\ca}{c\`adl\`ag}
\newcommand{\R}{\mathset{R}}
\newcommand{\abs}[2][]{#1\lvert #2#1\rvert}
\newcommand{\p}[2][]{#1 ( #2_t #1)_{t\geq 0}}
\renewcommand{\B}{\mathscr B}
 \def \lt{[\hskip-1.3pt[ }
\def \rt{ ]\hskip-1.3pt]}
\def\E{\mathbb{E}}
\def\P{\mathbb{P}}
\def\1{\mathbf{1}}
\def\({\left(}
\def\){\right)}
\def\np{\par\noindent}
\def\X{\mathbf{X}}
\def\M{\mathbf{M}}
\def\Y{\mathbf{Y}}
\def\citemma{\cite[Proposition~3.5]{fv-mma}}
\theoremstyle{plain}
\newtheorem{lemma}{Lemma}[section]
\newtheorem{proposition}[lemma]{Proposition}
\newtheorem{theorem}[lemma]{Theorem}
\newtheorem{corollary}[lemma]{Corollary}
\newtheorem*{theorem*}{Theorem}
\theoremstyle{definition}
\newtheorem{example}[lemma]{Example}
\theoremstyle{definition}
\newtheorem{definition}[lemma]{Definition}
\newtheorem{remark}[lemma]{Remark}
\newcommand{\devnull}[1]{}
\title{On infinitely divisible semimartingales}
\author{Andreas Basse-O'Connor$^{\dagger}$ and Jan Rosi\'nski$^{\ddagger}$\\
{\normalsize Aarhus University and University of Tennessee }\\
{\normalsize $^\dagger$E-mail: basse@imf.au.dk\qquad $^\ddagger$E-mail: rosinski@math.utk.edu}
}
\date{December 10, 2014}
\begin{document}

\maketitle

\begin{abstract}
Stricker's theorem states that a Gaussian process is a semimartingale in its natural filtration if and only if it is the sum of an independent increment Gaussian process  and a Gaussian process of finite variation, see [1983, \emph{Z. Wahrsch. Verw. Gebiete~64}(3)]. We consider extensions of this result to non Gaussian infinitely  divisible processes. First we show that the class of infinitely divisible semimartingales is so large that the natural analog of Stricker's theorem fails to hold. Then, as the main result, we prove that an infinitely divisible semimartingale relative to the filtration generated by a random measure admits a unique decomposition into an independent increment process and an infinitely divisible process of finite variation. Consequently, the natural analog of Stricker's theorem holds for  all strictly representable processes (as defined in this paper). Since Gaussian processes are strictly representable due to Hida's multiplicity theorem, the classical Stricker's theorem follows from our result. 
Another consequence is that the question when an infinitely divisible process is a semimartingale can often be reduced to a path property, when a certain associated infinitely divisible process is of finite variation.  This gives the key to characterize the semimartingale property for many processes of interest.  Along these lines, using Basse-O'Connor and Rosi\'nski [2013, \emph{Stochastic Process.\  Appl.~123}(6)],  we characterize semimartingales within a large class of stationary increment infinitely divisible processes; this class includes many infinitely divisible processes of interest, including linear fractional processes, mixed moving averages, and supOU processes, as particular cases.
 The proof of the main theorem relies on series representations of jumps of c\`adl\`ag infinitely divisible processes given in Basse-O'Connor and Rosi\'nski [2013, \emph{Ann.\ Probab.~41}(6)] combined with techniques of stochastic analysis.

 \bigskip
\noindent
\textit{Keywords: Semimartingales; Infinitely divisible processes; Stationary processes; Fractional processes} 

\smallskip   
\noindent
\textit{AMS Subject Classification: 60G48; 60H05; 60G51; 60G17}

\end{abstract}

%%%%%%%%%%%%%%%%%%%%%%%%%%%%%%
\section{Introduction}
%%%%%%%%%%%%%%%%%%%%

A process $\mathbf{X}=(X_t)_{t\geq 0}$ on a filtered probability space $(\Omega, \mathcal{F}, \Fi=(\F_t)_{t\ge 0}, \mathbb{P})$  is called a semimartingale (relative to the filtration $\Fi$) if it admits a decomposition 
 \begin{equation}\label{decomp-X}
 X_t=X_0+M_t+A_t,\quad t\geq 0,
 \end{equation}
 where $\M= \p M$ is a c\`adl\`ag local martingale, $\A= \p A$ is a c\`adl\`ag adapted process of finite variation, $M_0=A_0=0$ and $X_0$ is $\F_0$-measurable. $\X$ is called a special semimartingale if \eqref{decomp-X} holds with $\A$ being also predictable.  In that case decomposition \eqref{decomp-X} is unique and is called the canonical decomposition of $\X$. We refer to \citet{Jacod_S} and \citet{Protter} for basic properties of semimartingales. 
 
%It is well known that the semimartingale property depends on the choice of filtration. In particular, if $\X$ is a semimartingale relative to $\Fi$, then $\X$ is also  a semimartingale relative to its natural filtration $\Fi^X$, but the converse is false in general. If $\X$ is a process with index set $T \subset \R$, then by its natural filtration $\Fi^X=(\F^X_t)_{t\geq 0}$ we mean the least filtration satisfying the usual conditions such that  $\sigma(X_s\!:s\leq t, s \in T)\subseteq \F^X_t$ for all $t\geq 0$.   
  
  Semimartingales play a crucial role in stochastic analysis as they form the class of   \emph{good integrators}  for the It\^o  stochastic integral,  cf.\ the  Bichteler--Dellacherie Theorem \cite{Bichteler} and  \cite{B-D-direct-proof}. 
Semimartingales also play a fundamental role in mathematical finance. Roughly speaking, the (discounted) asset price process  must be a semimartingale in order to preclude arbitrage opportunities, see   
\citet[Theorems 1.4, 1.6]{B-D-direct-proof} for details, see also \cite{K-P}.
The question whether a given process is a semimartingale is also of importance in stochastic modeling, where long memory processes with possible jumps and high volatility are considered as driving processes for stochastic differential equations. Examples of such processes include various fractional, or more generally, Volterra processes driven by L\'evy processes.

The problem of identifying semimartingales within given classes of stochastic processes has a long history. For Markov processes this problem was  studied by \citet{cin} and \cite{mij, sch}, and in the context of Gaussian processes, it  was intensively studied in 1980s. 
\citet{Galchuk} investigated Gaussian semimartingales addressing a question posed by Prof.\ A.N.\ Shiryayev. Key results  on Gaussian semimartingales are due to \citet{Gau_Qua}, \citet{Stricker_gl},    \citet{Knight},  \citet{Yor:sem}, \citet[Ch.~4.9]{Lip_S} and \cite{Andreas3, Andreas2,  Andreas1, Basse_Graversen,   Cherny, Emery}.  % : Let $\X=(X_t)_{t\geq 0}$  
% be  a  Gaussian moving average of the form 
%\begin{equation}\label{MA-X}
% X_t=\int_{-\infty}^t \phi(t-s)\, dW_s, 
%\end{equation}
%where $(W_t)_{t\in \R}$ is a Brownian motion and $\morf{\phi}{\R}{\R}$ is a Lebesgue square integrable deterministic function vanishing on the negative axis. Then  $\X$ is a semimartingale with respect to the filtration $(\F^W_t)_{t\geq 0}$ if and only if $\phi$ is absolutely continuous on $\R_+$ with a square integrable derivative. Then, $\X$ can be decomposed uniquely into a  Brownian motion and a predictable process of finite variation. Extensions of Knight's result were given by 
 Stricker's theorem \cite[Th\'eor\`eme~1]{Stricker_gl} is probably the most  fundamental result on Gaussian semimartingales and it is used to obtain all of the above cited  results (except \cite{Gau_Qua}, which it extends). 
An important question when certain Gaussian semimartingales admit an equivalent local martingale measure was studied by \citet{Patrick}.

Throughout this paper, if $\X$ is  a process with index set $T \subset \R$, then
$\Fi^X=(\F^X_t)_{t\geq 0}$ denotes its natural filtration; i.e., the least
 filtration satisfying the \emph{usual conditions} such that  $\sigma(X_s\!:s\leq t, s \in T)\subseteq \F^X_t$, $t\geq 0$.  
 
  \begin{theorem*}[Stricker's theorem]
  Let $\X$ be a symmetric Gaussian process. Then $\X$ is a semimartingale relative to its natural filtration $\Fi^X$ if and only if it admits a decomposition \eqref{decomp-X}, where $(\X, \M,\A)$ are jointly symmetric Gaussian, $\M$ has independent increments and $\A$  is  a predictable  process of finite variation. 
  In this case, $\X$ is a special semimartingale and  \eqref{decomp-X} is the canonical decomposition of $\X$. 
 \end{theorem*}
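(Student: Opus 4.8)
The plan is to isolate the content in the ``only if'' direction and to exploit throughout the fact that, for a centered Gaussian process, conditioning is the same as orthogonal projection inside the Gaussian Hilbert space. Let $\mathcal H=\sp\{X_t:t\ge0\}\subseteq L^2(\P)$ and, for each $t$, $\mathcal H_t=\sp\{X_s:s\le t\}$; since $\X$ is Gaussian, $\F^X_t$ is generated up to null sets by $\mathcal H_t$, every $\xi\in\mathcal H$ has $\E[\xi\mid\F^X_t]$ equal to the orthogonal projection of $\xi$ onto $\mathcal H_t$ (so that $\E[\xi\mid\F^X_t]\in\mathcal H$), and an element $\xi\in\mathcal H$ is independent of $\F^X_t$ precisely when $\xi\perp\mathcal H_t$. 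The ``if'' direction is then immediate: a decomposition of the form \eqref{decomp-X} is by definition what makes $\X$ a semimartingale, and the extra requirements (joint Gaussianity, independent increments of $\M$, predictability of $\A$) only render the decomposition more rigid. So everything reduces to showing that a Gaussian $\X$ that happens to be an $\Fi^X$-semimartingale necessarily admits such a special decomposition.

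For the ``only if'' direction I would proceed in three steps. First, I show $\X$ is special: its jumps $\Delta X_t$ are Gaussian, hence in every $L^p$, and the large-jump process $\sum_{s\le t}\abs{\Delta X_s}\1_{\{\abs{\Delta X_s}>1\}}$ is locally integrable, so by the standard criterion (\citet[I.4.23--24]{Jacod_S}) $\X$ is special with a unique canonical decomposition $X_t=X_0+M_t+A_t$, $\M$ an $\Fi^X$-local martingale and $\A$ predictable of finite variation. The crux is the second step: to show the whole decomposition lives inside $\mathcal H$, i.e. $A_t,M_t\in\mathcal H$ for all $t$, so that $(\X,\M,\A)$ is jointly Gaussian. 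The idea is to realize the predictable drift as a limit of discrete conditional expectations: along refining partitions the Riemann sums $\sum_i\E[X_{t_{i+1}\wedge t}-X_{t_i\wedge t}\mid\F^X_{t_i}]$ are finite sums of projections of Gaussian variables, hence lie in the closed subspace $\mathcal H$, and one argues that they converge (in $L^2$, uniformly on compacts) to $A_t$; closedness of $\mathcal H$ then gives $A_t\in\mathcal H$, whence $M_t=X_t-X_0-A_t\in\mathcal H$ as well.

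In the third step I conclude. By Gaussian integrability $\M$ is a true, centered, jointly Gaussian $\Fi^X$-martingale, so for $s<t$ the martingale property together with Step 2 give that the projection of $M_t-M_s$ onto $\mathcal H_s$ vanishes, i.e. $M_t-M_s\perp\mathcal H_s$; being Gaussian this makes $M_t-M_s$ independent of $\F^X_s$, and hence $\M$ has independent increments. Then $\A=\X-X_0-\M$ is jointly Gaussian with $\X$, predictable and of finite variation, and uniqueness is inherited from that of the canonical decomposition; specialness was already noted in Step 1.

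I expect the main obstacle to be Step 2, the passage from the mere existence of the predictable finite-variation part to its explicit realization inside $\mathcal H$. Justifying the convergence of the discretized drift to $\A$ requires controlling the compensator and ruling out leakage of mass into the martingale part; the fixed-time Gaussian jumps (which, consistently with the statement, are forced into $\M$) and the integrability estimates underlying the localizations both need care. Establishing specialness cleanly in Step 1 is a secondary technical point feeding into this argument.
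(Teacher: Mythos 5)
Your proposal is correct in outline, but it is a genuinely different proof from the paper's: it is essentially Stricker's original argument \cite{Stricker_gl}, built on the Gaussian Hilbert-space structure (conditioning $=$ orthogonal projection) and on the approximation of the compensator by discrete Doob--Meyer sums in the style of Meyer \cite{Meyer_app}. The paper instead deduces Stricker's theorem from its general infinitely divisible result: by Hida's multiplicity theorem (Theorem~\ref{thm-HC}), every symmetric right-continuous in probability Gaussian process is \emph{strictly} representable by a Gaussian random measure $\Lambda$ with $\Fi^\Lambda=\Fi^X$ (Corollary~\ref{col-HC}), and then Theorem~\ref{Hida} (a consequence of the main Theorem~\ref{thm1}, proved by series representations of jumps) yields decomposition \eqref{decomp-X} with $\M$ and $\A$ representable by $\Lambda$ --- hence jointly Gaussian with $\X$ --- $\M$ with independent increments and $\A$ predictable of finite variation. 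The paper is explicit in Remark~\ref{remark-tech} that your route is the classical one and was deliberately avoided because the strong integrability it requires is a purely Gaussian luxury: your approach buys a self-contained, elementary proof in the Gaussian case, while the paper's buys a statement and proof that survive in the infinitely divisible setting, where the discretization technique breaks down.

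Three points in your sketch need repair, all fixable. (i) Specialness: Gaussianity of the individual jumps does not by itself make $\sum_{s\le t}|\Delta X_s|\mathbf{1}_{\{|\Delta X_s|>1\}}$ locally integrable; the clean argument is Fernique's theorem, which gives $\E\big[\sup_{s\le t}|X_s|\big]<\infty$ for a c\`adl\`ag Gaussian process, and then the $X^*$ criterion of \cite[I, 4.23--4.24]{Jacod_S}. The same Fernique bound, applied to $\M$ once Step 2 shows $\M$ is Gaussian, is what upgrades the local martingale to a true martingale in your Step 3. (ii) Since $\Fi^X$ satisfies the usual conditions, $\E[\,\cdot\,|\F^X_t]$ is the projection onto $\mathcal{H}_{t+}=\cap_{u>t}\mathcal{H}_u$, not onto $\mathcal{H}_t$; this is harmless because $\mathcal{H}_{t+}\subseteq\mathcal{H}$, but your orthogonality/independence arguments in Step 3 must be phrased with $\mathcal{H}_{s+}$. (iii) You neither need, nor can you directly get from the Stricker--Meyer approximation results, convergence of $A^n_t$ to $A_t$ in $L^2$ uniformly on compacts; what those results provide is convergence in probability (in the sense indicated in Remark~\ref{remark-tech}), and that suffices: a Gaussian linear space is closed under convergence in probability, since for centered jointly Gaussian variables Cauchyness in probability forces the variances of differences to vanish, so the convergence upgrades to $L^2$ and the limit stays in $\mathcal{H}$. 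With these adjustments your three-step scheme is a complete proof, but it is the classical one rather than the paper's.
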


In this paper we investigate if (and how) Stricker's theorem can be generalized to the much larger class of infinitely divisible processes, which includes Gaussian, stable and other processes of interest (see Section~\ref{appl} for specific examples). Recall that a process $\mathbf{X}=(X_t)_{t\in T}$ is said to be infinitely divisible if  all its finite dimensional distributions are infinitely divisible, and it is  called symmetric if $\bf X$ and $-\bf X$  have the same finite dimensional distributions. 

We will now do a preliminary analysis of this problem to gain more intuitions. There are two key features of the decomposition \eqref{decomp-X} in the Gaussian case. The first one is that components $\M$ and $\A$ of the canonical decomposition are in the same distributional class as $\X$, both are Gaussian. The second one is that $\M$ is a process with independent increments. 
The  following two examples show that we cannot hope to get a direct extension of Stricker's theorem. The  first one shows that the processes $\M$ and $\A$ in the canonical decomposition \eqref{decomp-X} of an infinitely divisible semimartingale are not infinitely divisible in general.  

\begin{example}\label{ex-1}
Let $\X=(X_t)_{t\geq 0}$ be the symmetric  infinitely divisible process given by 
\begin{equation}
X_t=\begin{cases} 
U+V & 0\leq t<1, \\
V & \ t\geq 1,
 \end{cases}  
\end{equation}
where random variables $U$ and $V$ have standard Gaussian and standard Laplace distributions, respectively, and $U$ and $V$  are independent. Then $\X$ is a special semimartingale relative to the natural filtration $\Fi^X$,
%\begin{equation} \label{}
%  \F_t^X = \begin{cases}
%   \sigma(U+V)     & 0 \le t < 1   \\ 
%    \sigma(U, V)       & t >1, 
%\end{cases}
%\end{equation}
but processes $\M$ and $\A$ in its canonical decomposition \eqref{decomp-X} are not infinitely divisible (see Appendix A for details). 
\end{example}

The second  example shows that  $\M$ in the canonical decomposition \eqref{decomp-X} of an infinitely divisible semimartingale need not have independent increments. 

\begin{example}\label{ex-2}
Let $\X=(X_t)_{t\geq 0}$ be the symmetric infinitely divisible process given by $X_t=\sum_{k=1}^N B_k(t)$, where $\{B_k(t):t\geq 0\}$ are independent standard Brownian motions and $N$ is a Poisson random variable independent of $\{B_k(t):t\geq 0,\,k\in \N\}$. Then $\X$ is a special semimartingale relative to $\Fi^X$, with the canonical decomposition \eqref{decomp-X} given by $M_t=X_t$ and  $A_t=0$. Process $\M$ does not have independent increments (see Appendix A for details). 
\end{example}

This leads to the question:  \emph{What are the special properties of Gaussian processes  that make Stricker's theorem valid?}

The key to address this question is provided by Hida's multiplicity  theorem \cite[Theorem~4.1]{Hida}. We give it here in a simplified version  which suffices for our purposes, see Remark \ref{rem-HC}.
%\cite[Theorem~4.1]{Hida}.
% where we use the notation $\h_t(X)=\sp_{L^2}\{X_s:s\in [0,t]\}$ and $\h_{t+}(X)=\cap_{s:\, s>t}\h_s$.

\begin{theorem}[Hida's multiplicity theorem]\label{thm-HC}
Let $\X= (X_t)_{t\ge0}$ be a symmetric Gaussian process which is right-continuous in probability.  Then there exist independent symmetric right-continuous in $L^2$ Gaussian processes $\mathbf{B}_j=(B_j(t))_{t\in \R}$, $j \le N \le \infty$, each $\mathbf{B}_j$ having independent increments and $B_j(0)=0$, such that for each  $t\ge 0$  $\F^X_t = \vee_{j}  \F^{B_j}_t$  and
  \begin{equation}\label{H-C-rep-eq-36}
   X_t = \sum_{j=1}^N  \int_{-\infty}^t f_j(t, s) \, dB_j(s) \qquad \text{a.s.}  
   \end{equation}
Here $(f_j(t,\cdot))_{t\ge 0}$ is a family of deterministic functions such that for every $t\ge 0$ \\ $\int_{-\infty}^t f(t,s)^2 \, m_j(ds)< \infty$, where $m_j(ds)=\E [B_j(ds)^2]$. 
\end{theorem}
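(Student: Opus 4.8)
The plan is to reduce the statement to the Hellinger--Hahn multiplicity theorem for a resolution of the identity, applied to the Gaussian Hilbert space generated by $\X$; once that spectral decomposition is in hand, the processes $\mathbf B_j$, the kernels $f_j$, and the filtration identity are essentially read off from it. Concretely, I would first pass from the process to operators. Let $H=\sp\{X_t:t\ge0\}\subseteq L^2(\Omega,\P)$ be the closed linear span, a Gaussian Hilbert space. Since convergence in probability and in $L^2$ coincide on a Gaussian space, right-continuity of $\X$ in probability upgrades to right-continuity in $L^2$; in particular $\{X_q:q\in\mathbb Q,\ q\ge0\}$ is $L^2$-dense in $\{X_t:t\ge0\}$, so $H$ is separable. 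For $t\ge0$ set $H_t=\sp\{X_s:s\le t\}$ and $H_t=\{0\}$ for $t<0$, and let $P_t$ denote the orthogonal projection of $H$ onto $H_t$. Then $\{P_t\}_{t\in\R}$ is increasing with $P_{-\infty}=0$ and $P_{+\infty}=I_H$, and $L^2$-right-continuity of $\X$ makes $t\mapsto P_t$ strongly right-continuous, so $\{P_t\}$ is a resolution of the identity on $H$.

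Next I would invoke the multiplicity theorem for $\{P_t\}$: since $H$ is separable there exist $N\le\infty$, generating vectors $h_j\in H$, finite Borel measures $m_j$ on $\R$ with $m_1\gg m_2\gg\cdots$, and unitaries $U_j$ from the mutually orthogonal cyclic subspaces $H^{(j)}=\sp\{P_th_j:t\in\R\}$ onto $L^2(\R,m_j)$, such that $H=\bigoplus_{j\le N}H^{(j)}$, $U_jh_j=\1$, and $U_jP_tU_j^{-1}$ is multiplication by $\1_{(-\infty,t]}$. I would then define $B_j(t):=(P_t-P_0)h_j$, so that $B_j(0)=0$ and $U_jB_j(t)=\1_{(0,t]}$ for $t\ge0$ (and $-\1_{(t,0]}$ for $t<0$). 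Joint Gaussianity is automatic since every $B_j(t)\in H$; orthogonality of the $H^{(j)}$ gives independence across $j$; orthogonality of the spectral increments $P_t-P_s$ over disjoint intervals gives independent increments of each $\mathbf B_j$ with $\E[(B_j(t)-B_j(s))^2]=m_j((s,t])$, i.e. $m_j(\d s)=\E[B_j(\d s)^2]$; continuity from above of $m_j$ yields right-continuity of $\mathbf B_j$ in $L^2$; and $U_j^{-1}$ is exactly the Wiener integral $g\mapsto\int g\,\d B_j$.

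It then remains to extract the representation and the filtration identity. For fixed $t\ge0$ one has $X_t\in H_t$, hence $f_j(t,\cdot):=U_jX_t$ is supported on $(-\infty,t]$ and lies in $L^2(m_j)$, which is the asserted square-integrability; applying $U_j^{-1}$ componentwise gives $X_t=\sum_{j\le N}\int_{-\infty}^tf_j(t,s)\,\d B_j(s)$, namely \eqref{H-C-rep-eq-36}. For the filtrations, $U_j(H_t\cap H^{(j)})=\1_{(-\infty,t]}L^2(m_j)=U_j\,\sp\{B_j(u):u\le t\}$, so $H_t=\sp\{B_j(u):u\le t,\ j\le N\}$; since for jointly Gaussian families the $\P$-completed $\sigma$-field generated coincides with that generated by their $L^2$-span, and using right-continuity to pass to the natural filtrations, this yields $\F^X_t=\vee_{j}\F^{B_j}_t$.

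The deep input is the spectral multiplicity theorem; the remainder is a dictionary between Gaussian processes and resolutions of the identity. Within that dictionary the two steps I expect to need the most care — and the likeliest sources of friction — are (i) deducing strong right-continuity of $\{P_t\}$ from right-continuity in probability, and (ii) matching the $\P$-completed, right-continuous natural filtrations in $\F^X_t=\vee_j\F^{B_j}_t$.
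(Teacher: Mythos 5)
Your route is legitimate in principle, and it is in fact the same technology that underlies the paper's treatment: the paper gives no proof at all, but (see Remark~\ref{rem-HC}) simply invokes Hida--Hitsuda, Theorem~4.1$'$, applied to $\bar X_t = X_t\1_{\{t\ge 0\}}$, and that cited theorem is itself proved by the Hellinger--Hahn spectral multiplicity theory you describe. So you are reconstructing the proof of the reference rather than citing it. However, as written there is a genuine gap, precisely at the step you flagged as (i): it is \emph{false} that right-continuity of $\X$ in probability (equivalently, in $L^2$) forces $t\mapsto P_t$ to be strongly right-continuous. Counterexample: $X_t=(t-1)^+\xi$ with $\xi$ standard Gaussian. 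This process is even $L^2$-continuous, yet $H_1=\{0\}$ while $H_s=\R\xi$ for every $s>1$, so $\bigcap_{s>1}H_s=\R\xi\neq H_1$ and $P_t$ is not right-continuous at $t=1$. Hence $\{P_t\}$ need not be a resolution of the identity, and Hellinger--Hahn cannot be applied to it as you state. The same example shows the difficulty is not cosmetic: $\sigma(X_s: s\le 1)$ is trivial, while any family $\mathbf{B}_j$ produced by a spectral construction already generates $\sigma(\xi)$ at time $1$; the filtration identity in Theorem~\ref{thm-HC} survives only because $\F^X_t$ is, by the paper's convention, the usual-conditions augmentation.

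The repair is standard but needs an ingredient you did not supply. One must work with the right-continuous regularization $H_{t+}:=\bigcap_{s>t}H_s$ and its projections; since $X_t\in H_t\subseteq H_{t+}$, the support property of $f_j(t,\cdot)$ and the representation \eqref{H-C-rep-eq-36} go through unchanged. But then your step (ii) can no longer be settled by "matching $L^2$-spans and passing to natural filtrations", because the span generated by $\{X_s:s\le t\}$ is $H_t$, not $H_{t+}$. What is needed is the Gaussian germ-field lemma: if $G_s\downarrow G$ are closed subspaces of a Gaussian space and $\mathcal{N}$ denotes the $\P$-null sets, then $\bigcap_{s>t}\big(\sigma(G_s)\vee\mathcal{N}\big)=\sigma(G)\vee\mathcal{N}$ (proved, e.g., by backward martingale convergence of conditional characteristic functions $\E[e^{i\eta}\mid\sigma(G_s)]$, which are explicit exponentials of projections). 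With this lemma, $\F^X_t=\bigcap_{s>t}\big(\sigma(X_u\!:u\le s)\vee\mathcal{N}\big)=\sigma(H_{t+})\vee\mathcal{N}$, and since in the regularized spectral picture $H_{t+}\cap H^{(j)}=\sp\{B_j(u):u\le t\}$, the right-hand side equals $\vee_j\F^{B_j}_t$ with no further right-limit needed on the $B_j$ side. So your skeleton is correct and completable, but the right-continuity claim it rests on is wrong, and the filtration step silently leans on that false claim.
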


\begin{definition}\label{}
An infinitely divisible process $\mathbf{X}=(X_t)_{t\geq 0}$ is said to be  {\it representable} if there exist a countable generated measurable  space $V$, an infinitely divisible independently scattered  random measure $\Lambda$ on $\R\times V$,  and a family of measurable functions $\{\phi(t, \cdot)\}_{t\ge 0}$ on $\R\times V$  such that for every $t\ge 0$
\begin{equation}\label{rep-X-62}
X_t=\int_{(-\infty,t]\times V} \phi(t,u)\,\Lambda(du) \qquad \text{a.s.}
\end{equation}
%We will say that $\X$ {\it is representable by} $\Lambda$ if \eqref{rep-X-62} holds for some $\phi$.
The process $\X$ is said to be  {\it strictly representable} if \eqref{rep-X-62} holds for some $(\Lambda, \phi)$ as above and $\F^X_t=\F^{\Lambda}_t$ for every $t\geq 0$. Here   $\Fi^\Lambda=(\F^\Lambda_t)_{t\geq 0}$ denotes  the filtration generated by $\Lambda$; see Section~\ref{sec-def}  for the definition of $\Lambda$ and further pertinent definitions and related facts.
\end{definition}

As a corollary to Hida's multiplicity theorem it follows that Gaussian processes are strictly representable:

\begin{corollary}\label{col-HC} We have the following:
\begin{enumerate}[(i)]
\item \label{cor-1-yr}  Every symmetric right-continuous in probability Gaussian process $\X= (X_t)_{t\ge0}$ is strictly representable by some  symmetric Gaussian random measure $\Lambda$.
\item \label{cor-2-yr}  Every symmetric right-continuous in probability, or mean zero and right-continuous in $L^1$, infinitely divisible process $\X= (X_t)_{t\ge0}$ is  representable.
\end{enumerate}
 \end{corollary}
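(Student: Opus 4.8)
The plan is to prove Corollary~\ref{col-HC} by deriving both assertions essentially as restatements of Hida's multiplicity theorem (for part~\eqref{cor-1-yr}) and of a suitable series/integral representation of infinitely divisible processes (for part~\eqref{cor-2-yr}). For part~\eqref{cor-1-yr}, I would start from the representation \eqref{H-C-rep-eq-36} furnished by Theorem~\ref{thm-HC}. The independent increment Gaussian processes $\mathbf{B}_j$ naturally give rise to independent Gaussian random measures; more precisely, each $\mathbf{B}_j$ defines a Gaussian independently scattered random measure $\Lambda_j$ on $\R$ via $\Lambda_j((a,b]) = B_j(b) - B_j(a)$, with control measure $m_j$. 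The key bookkeeping step is to package the countable (or finite) family $\{\mathbf{B}_j\}_{j \le N}$ into a single infinitely divisible independently scattered random measure $\Lambda$ on the product space $\R \times V$, where I would take $V = \{1, \dots, N\}$ (equipped with the discrete, hence countably generated, $\sigma$-algebra) so that $\Lambda(du)$ restricted to the fiber $\R \times \{j\}$ is $\Lambda_j$. Then I would set $\phi(t, (s,j)) = f_j(t,s)\mathbf{1}_{\{s \le t\}}$ and verify that \eqref{rep-X-62} reproduces \eqref{H-C-rep-eq-36}. The square-integrability condition on $f_j$ against $m_j$ guarantees that each stochastic integral is well-defined.

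The crucial point for \emph{strict} representability is the filtration identity $\F^X_t = \F^\Lambda_t$. Here I would argue that, by construction, $\F^\Lambda_t = \vee_j \F^{\Lambda_j}_t = \vee_j \F^{B_j}_t$, since the filtration generated by $\Lambda$ restricted to $(-\infty,t]\times V$ coincides with the join of the filtrations generated by the individual $\mathbf{B}_j$ up to time $t$. Theorem~\ref{thm-HC} asserts precisely that $\F^X_t = \vee_j \F^{B_j}_t$, so the two filtrations agree, after passing to the usual augmentation. The main subtlety to check carefully is that the completion/right-continuity conventions in the definition of $\F^\Lambda$ (given in Section~\ref{sec-def}) match those used for $\F^X$ and for the $\F^{B_j}$, so that the equality holds on the nose and not merely up to null sets before augmentation.

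For part~\eqref{cor-2-yr}, the situation is weaker since only \emph{representability} (not strict) is claimed, so the filtration identity is not required and I only need existence of some $(\Lambda, \phi)$ realizing \eqref{rep-X-62}. The natural approach is to invoke a general spectral/series representation for right-continuous-in-probability (or right-continuous-in-$L^1$, mean-zero) infinitely divisible processes: such a process, being infinitely divisible with all finite-dimensional distributions infinitely divisible, admits an integral representation against an infinitely divisible independently scattered random measure on a suitable product space $\R \times V$. I would appeal to the Maruyama--Rosi\'nski type representation of infinitely divisible processes as stochastic integrals, choosing the index set $\R$ in the first coordinate to accommodate the causal structure $(-\infty,t]$ and letting $V$ carry the remaining randomness (the jump sizes / spectral variable), ensuring $V$ is countably generated by construction.

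The hard part will be part~\eqref{cor-2-yr}: unlike the Gaussian case, where Hida's theorem hands us the representation directly, the general infinitely divisible case requires producing the representing random measure $\Lambda$ and the kernel $\phi$ with the causal support in $(-\infty,t]$, and ensuring the independently scattered structure on a countably generated $V$. I expect this to hinge on a measurable selection / factorization argument showing that the \levy measures of the finite-dimensional distributions can be disintegrated consistently into a single independently scattered measure; the right-continuity in probability (resp.\ in $L^1$ with mean zero) is what secures the needed measurability and integrability so that the stochastic integral exists and represents $\X$. Part~\eqref{cor-1-yr}, by contrast, is mostly a matter of careful repackaging of Theorem~\ref{thm-HC} and verifying the filtration equality.
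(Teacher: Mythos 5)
Your part~\eqref{cor-1-yr} is essentially the paper's proof: apply Theorem~\ref{thm-HC}, take $V=\{1,\dots,N\}$ (or $\N$), set $\Lambda((a,b]\times\{j\})=B_j(b)-B_j(a)$ and $\phi(t,(s,j))=f_j(t,s)$, and read off the filtration identity from the statement $\F^X_t=\vee_j\F^{B_j}_t$ in Theorem~\ref{thm-HC}; your extra care about the usual-conditions augmentation is fine but not a different argument.

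For part~\eqref{cor-2-yr} there is a genuine gap, and it is precisely where you say you expect the hard work to be. You plan to produce a representing measure whose time coordinate genuinely carries causal structure, via a ``measurable selection / factorization argument'' disintegrating the L\'evy measures of the finite-dimensional distributions consistently along $\R$. You do not supply this argument, and it is not needed: the paper's Proposition~\ref{representable} first invokes \cite[Theorem~4.11]{Rosinski_spec} to write $X_t=\int_V\bar\phi(t,v)\,\bar\Lambda(dv)$ over an abstract countably generated space $V$ with \emph{no} time coordinate at all, and then extends $\bar\Lambda$ to $\R\times V$ by the trivial embedding $\Lambda(A\times B):=\delta_0(A)\bar\Lambda(B)$. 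Since $0\in(-\infty,t]$ for every $t\ge 0$, the constraint in \eqref{rep-X-62} that the integral be taken over $(-\infty,t]\times V$ is vacuously satisfied; representability imposes no causality whatsoever on how $\Lambda$ sits over the time axis. By contrast, the causal structure you are trying to manufacture is essentially what \emph{strict} representability demands, and the paper shows (Examples~\ref{ex-1} and \ref{ex-2}, which are representable but not strictly representable) that such structure can be impossible to attain relative to the natural filtration. So an argument along the lines you sketch would either prove far less than you hope or fail outright, whereas the delta-at-zero trick finishes the proof in two lines.
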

\begin{proof}
\eqref{cor-1-yr}: Applying Theorem \ref{thm-HC}, we may take in \eqref{rep-X-62}   $V=\{1,\dots,N\}$, when $N < \infty$ or $V=\N$ when $N=\infty$, a Gaussian random measure $\Lambda$ on $\R\times V$ determined by $\Lambda((a,b]\times \{j\})=B_j(b)-B_j(a)$, and $\phi(t, (s,j))=f_j(t,s)$. \eqref{cor-2-yr} follows by  
Proposition~\ref{representable}. 
\end{proof}

Typical infinitely divisible processes are defined by a stochastic integral as in \eqref{rep-X-62} with specific $\Lambda$ and $\phi$, so they are explicitly representable. Moreover, by Corollary~\ref{col-HC}\eqref{cor-2-yr}, every right-continuous in probability symmetric infinitely divisible process is representable. 
On the other hand, the strict representability may be difficult, if not impossible, to attain. For instance, processes given by 
Examples~\ref{ex-1} and \ref{ex-2} are representable but not strictly representable. The latter fact can easily be deduced from the next theorem but direct proofs are also possible, see the end of Example~\ref{ex-1} in Appendix~\ref{app}. 

The following result generalizes Stricker's theorem to infinitely divisible processes. It is a direct consequence of our main result, Theorem~\ref{thm1}.

%, cf.\  \citet[Theorem~4.11]{Rosinski_spec}

\begin{theorem}\label{Hida}
Suppose that $\X=(X_t)_{t\geq 0}$ is a symmetric infinitely divisible process representable by a symmetric infinitely divisible random measure $\Lambda$. Then $\X$ is a semimartingale relative to the filtration $\Fi^\Lambda$ if and only if 
\begin{equation}\label{de-X-73}
X_t=X_0+M_t+A_t
\end{equation}
where $\M$ and $\A$ are infinitely divisible processes representable by $\Lambda$ such that  $\M$ is a c\`adl\`ag process with independent increments relative to $\Fi^{\Lambda}$,  $\A$ is  a  predictable c\`adl\`ag process of finite variation and $M_0=A_0=0$.  Decomposition \eqref{de-X-73} is unique in the class of processes representable by $\Lambda$. Furthermore, $\X$ is a special semimartingale if and only if \eqref{de-X-73} holds and $\M$ is a martingale with independent increments. 
\end{theorem}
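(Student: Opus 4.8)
The plan is to prove the two implications separately, then establish uniqueness, and finally read off the special-semimartingale characterisation. In the paper's logical order this is obtained from the main structural theorem; since I sketch it from the ground up, I would produce the decomposition directly.

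First I would dispose of the ``if'' direction, which does not use the full strength of the hypotheses. The process $\A$ is c\`adl\`ag, adapted and of finite variation, hence a semimartingale. For $\M$, note that since it is representable by the symmetric random measure $\Lambda$, all its finite-dimensional distributions are symmetric; in particular $\M$ carries no deterministic drift, and a c\`adl\`ag process with independent increments and no drift is a semimartingale (it is an additive process, splitting into a continuous Gaussian local martingale and a symmetric jump part; cf.\ \citet{Jacod_S}). Consequently $\X=X_0+\M+\A$ is a semimartingale relative to $\Fi^\Lambda$. The symmetry coming from representability by $\Lambda$ is exactly what rules out a non-finite-variation continuous drift, which would otherwise destroy the semimartingale property.

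The ``only if'' direction is the analytic heart of the result and is where I expect the main difficulty. Assume $\X$ is a semimartingale relative to $\Fi^\Lambda$. I would exploit two structural consequences of $\Lambda$ being independently scattered: its Gaussian component has a deterministic variance measure, and its jumps form a Poisson random measure with deterministic compensator. Hence $\Fi^\Lambda$ admits a martingale representation in which every local martingale is the sum of a predictable integral against the continuous Gaussian part of $\Lambda$ and a predictable integral against the compensated Poisson jump measure of $\Lambda$. The task is then to show that, for the semimartingale $\X$ with deterministic representing kernel $\phi(t,\cdot)$, the contribution of $\Lambda$ ``at the running time'' can be isolated with a deterministic integrand, producing an independent-increment process $\M$, while the residual ``memory'' contribution is forced by the semimartingale hypothesis to be predictable and of finite variation, producing $\A$. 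Concretely I would: (i) use the series representation of the jumps of a c\`adl\`ag infinitely divisible process of Basse-O'Connor and Rosi\'nski to write $\Delta X$ in terms of the Poisson points of $\Lambda$ and to separate the jumps created at the current instant (the jump part of $\M$) from those inherited from the past; (ii) treat the continuous Gaussian component analogously to extract a continuous independent-increment part; and (iii) collect the residual memory term and verify, using that $\X$ is a semimartingale, that it is predictable of finite variation. Both $\M$ and $\A$ are representable by $\Lambda$ by construction. The genuine obstacle is step (iii) in the abstract setting (arbitrary $\Lambda$ on $\R\times V$, with both Gaussian and Poisson parts, and $\phi(t,\cdot)$ only measurable): one must show that the semimartingale property of $\X$ transfers to finite variation of the memory term, and this is precisely where the jump series representation and the stochastic-analysis machinery of the main theorem are indispensable.

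For uniqueness, suppose $X_0+\M+\A=X_0+\M'+\A'$ with $\M,\M'$ of independent increments, $\A,\A'$ predictable of finite variation, and all four representable by $\Lambda$. Put $\mathbf{R}:=\M-\M'=\A'-\A$, a process representable by $\Lambda$ that is both predictable and of finite variation. Since $\mathbf{R}$ is predictable it has no totally inaccessible jumps, whereas the jumps of $\M-\M'$ at the Poisson times of $\Lambda$ are totally inaccessible; matching these forces the jump kernels of $\M$ and $\M'$ to agree. The continuous Gaussian local-martingale parts of $\M$ and $\M'$ then differ by a continuous local martingale of finite variation, which is constant, so they agree as well. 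What remains of $\mathbf{R}$ is deterministic; but $\mathbf{R}$ is representable by the symmetric $\Lambda$, hence symmetric, and a deterministic symmetric process vanishes. Therefore $\mathbf{R}=0$, i.e.\ $\M=\M'$ and $\A=\A'$. Finally, the special-semimartingale characterisation follows from uniqueness together with the definition: given the now unique decomposition \eqref{de-X-73}, the part $\A$ is already predictable of finite variation, so $\X$ is special precisely when $\M$ is a local martingale, and a symmetric independent-increment process representable by $\Lambda$ is a local martingale exactly when it is a martingale with independent increments, which is the stated condition; in that case $X_0+\M+\A$ is the canonical decomposition of $\X$.
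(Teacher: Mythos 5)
Your ``if'' direction is fine, and in fact supplies a detail the paper leaves implicit: a c\`adl\`ag process with independent increments need not be a semimartingale (a deterministic c\`adl\`ag function of infinite variation already fails), and it is exactly the symmetry inherited from representability by the symmetric $\Lambda$ that kills the drift and makes $\M$ a semimartingale. The ``only if'' direction, however, is a plan rather than a proof: your steps (i)--(iii) are a reasonable sketch of the proof of the paper's main result, Theorem~\ref{thm1}, but all of the actual work --- the series representation of the jumps, the Rademacher-series/quadratic-variation estimate showing that $\int_{(0,t]\times V}\phi(s,(s,v))\,\Lambda(ds,dv)$ exists, the martingale representation property of $\Fi^\Lambda$, and the predictable-projection argument at fixed times --- is deferred, as you acknowledge. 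If instead you take Theorem~\ref{thm1} as given (which is how the paper proves the present statement), then what remains and what you omit are two concrete reductions: one must first replace $\phi$ by a version with $\phi(\cdot,u)$ c\`adl\`ag for every $u$ (Remark~\ref{cadlag}, available here by symmetry), and one must verify the drift condition \eqref{drift}, which holds because $B\equiv 0$ for symmetric $\Lambda$ (Remark~\ref{remark-sym}).

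The genuine gap is in your uniqueness argument, at the fixed discontinuities. After cancelling the totally inaccessible jumps and the continuous Gaussian martingale parts, what remains of $\mathbf{R}=\M-\M'$ is the difference of the fixed-time jump parts, and you simply assert that this is deterministic. That does not follow from what you have established: at a fixed time $t_0$, $\Delta M_{t_0}$ and $\Delta M'_{t_0}$ are each independent of $\F^{\Lambda}_{t_0-}$ (marginal independent increments) and their difference is $\F^{\Lambda}_{t_0-}$-measurable (predictability of $\mathbf{R}$), but two random variables each independent of a $\sigma$-field need not have a difference that is independent of it --- this is precisely a pairwise-versus-joint independence issue, and it is the crux of the paper's uniqueness proof. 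The paper closes it by showing that $(\M,\M')$ is \emph{jointly} a process with independent increments relative to $\Fi^\Lambda$: since $\M$ and $\M'$ are representable by $\Lambda$, all relevant vectors are jointly infinitely divisible, and pairwise independence between blocks of jointly infinitely divisible variables implies independence of the blocks (a Hudson--Tucker argument); then $\overline{\M}=\M'-\M$ is a predictable semimartingale with independent increments, shown to be deterministic by truncating its jumps and applying \cite[II, 5.10]{Jacod_S} and \cite[I, 3.16]{Jacod_S}, after which symmetry forces it to vanish. Your argument can be repaired locally, e.g.\ by a characteristic-function computation at each fixed time: if $X$ and $Y$ are each independent of $\mathcal{G}$ and $W=X-Y$ is $\mathcal{G}$-measurable, then $\phi_X(u)=e^{iuW}\phi_Y(u)$ a.s.\ near $u=0$, where $\phi_X,\phi_Y$ denote characteristic functions, which forces $W$ to be a.s.\ constant --- but some such argument must be supplied; as written the step is unjustified. (A smaller looseness of the same kind: your final equivalence ``$\X$ special iff $\M$ is a local martingale iff $\M$ is a martingale'' is most easily justified as in the paper, via \cite[II, 2.29(a)]{Jacod_S}: a semimartingale with independent increments is special iff $\E\abs{M_t}<\infty$, and then symmetry gives $\E M_t=0$, so $\M$ itself is a martingale.)
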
 

There is a slight difference between \eqref{de-X-73} and \eqref{decomp-X} in the meaning of  $\M$. In \eqref{de-X-73}, $\M$ is a process with independent increments which needs not be a (local) martingale. It could be further decomposed into a martingale and a process of finite variation leading to \eqref{decomp-X} but we would loose the predictability of $\A$ and uniqueness of the decomposition.  If $\X$ is a Gaussian semimartingale relative $\Fi^X$, then by Corollary~\ref{col-HC}\eqref{cor-1-yr} and Theorem~\ref{Hida}, $\X$ is a special semimartingale and $(\M, \A,\X)$  are jointly Gaussian, which gives Stricker's theorem. If $\X$ is a symmetric $\alpha$-stable process representable by a symmetric $\alpha$-stable random measure $\Lambda$, for example, then $\X$ is a semimartingale relative $\Fi^{\Lambda}$ if and only if it has a decomposition \eqref{de-X-73} into jointly symmetric $\alpha$-stable processes $\M$ and $\A$. 
If such $\X$ is strictly representable by a symmetric $\alpha$-stable random measure, then \eqref{de-X-73} gives the decomposition of $\X$ relative to its natural filtration.

Our proofs rely on different techniques than those used in  the  Gaussian case, see Remark~\ref{remark-tech}. We combine series representations of \ca\ infinitely divisible  processes with  detailed analysis of their jumps, which seems to be a new approach in this context. This technique is possible because such series representations converge uniformly a.s.\ on compacts, as shown in a recent work of \citet[Theorem 3.1]{Ito-Nisio-D}.

Section~\ref{sec-def} contains preliminary definitions and facts. Our main result, Theorem~\ref{thm1}, is stated and proved in Section~\ref{s-sem}, and the proof of Theorem~\ref{Hida} is given at the end of this section. Theorem~\ref{thm1} reduces the question when an infinitely divisible process is a semimartingale relative to $\Fi^{\Lambda}$  to the one when a certain associated infinitely divisible process is of finite variation.
%It gives the necessary and sufficient conditions for $(\X, \Fi^{\Lambda})$ to be a semimartingale, together with an essentially unique explicit decomposition of $\X$ into infinitely divisible components. It completely answers the above question Q1 and gives a framework how to approach question Q2 in concrete situations.  
In Section~\ref{appl} we use Theorem~\ref{thm1} to obtain explicit necessary and sufficient conditions for a large class of stationary increment infinitely divisible processes to be  semimartingales, see Theorems~\ref{thm-suf} and \ref{thm-nes} and their subsequent remarks. These results extend \citet[Theorem~6.5]{Knight} from Gaussian 
to infinitely divisible  processes, see Corollary~\ref{cor-levy}. We then apply Theorems~\ref{thm-suf} and \ref{thm-nes} to characterize the semimartingale property of various type of processes including linear fractional processes, moving averages, supOU processes and etc. These latter results generalize in a natural way results of \citet{Basse_Pedersen} and \citet{Be_Li_Sc}. 
Some supplementary material was moved to Appendices A and B.

\section{ Preliminaries}\label{sec-def}
%%%%%%%%%%%%%%

In this section we will give more definitions and notation, and recall some facts that will be used throughout this paper. The material on infinitely divisible random measures and the related stochastic integral can be found in \citet{Rosinski_spec}. $(\Omega,\F,\P)$ will stand for a complete probability space, $(V, \mathcal V)$ will denote a countable generated measurable space, that is, the $\sigma$-algebra $\mathcal V$ is generated by countable many sets, and $\{V_n \}\subset \mathcal{V}$ will be a fixed sequence such that $V_n \uparrow V$. Define 
\begin{equation} \label{}
  \s = \big\{A \in \B(\R)\otimes \V: \ A \subset [-n, n] \times V_n \ \text{for some } n \ge 1 \big\}.
\end{equation}
Then $\s$ is a $\delta$-ring of subsets of $\R\times V$ such that $\sigma(\s)=\B(\R)\otimes \V$. For example, $\s$ can be the family of bounded Borel subsets of an Euclidean space. A stochastic process $\Lambda=\{\Lambda(A) \}_{A \in  \s}$ is said to be an (independently scattered) infinitely divisible random measure if 
\begin{enumerate}[(i)]
  \item for any sequence $(A_n)_{n\in \N} \subseteq \s$ of pairwise disjoint sets, $\Lambda(A_n)$, $n=1,2,\dots$ are independent and if $\bigcup_{n=1}^{\infty} A_n \in \s$, then $\Lambda(\bigcup_{n=1}^{\infty} A_n) = \sum_{n=1}^{\infty} \Lambda(A_n)$ a.s.;
  \item $\Lambda(A)$ has an infinitely divisible distribution for every $A \in \s$.
\end{enumerate}
$\Fi^{\Lambda}=(\mathcal{F}^{\Lambda}_t)_{t \ge 0}$ will denote the natural  filtration of $\Lambda$,  i.e., the least filtration satisfying the \emph{usual conditions} of right-continuity and completeness such that 
\begin{equation}\label{def_fil}
\sigma\Big(\Lambda(A): A\in \s,\, A\subseteq (-\infty,t]\times V\Big)\subseteq \F^\Lambda_t,\qquad t\geq 0.
 \end{equation} 

We will now recall deterministic characteristics of $\Lambda$ that will play a crucial role in this paper. From \cite[Proposition 2.4]{Rosinski_spec}, there exist measurable functions $\morf{b}{\R\times V}{\R}$ and $\morf{\sigma}{\R\times V}{\R_+}$, a $\sigma$-finite measure $\kappa$ on $\R\times V$, and a measurable family $\{\rho_{u}\}_{u\in \R\times V}$ of L\'evy measures on $\R$  such that for every $A \in \s$ and $\theta \in \R$
\begin{equation}\label{Lambda} %! Lambda
 \log \E e^{i\theta\Lambda(A)}= \int_{A} \Big[ i\theta b(u)-\frac{1}{2}\theta^2 \sigma^2(u) +\int_{\R} \big(e^{i\theta x}-1-i\theta \lt x\rt\big) \, \rho_{u}(dx) \Big] 
 \,\kappa(d u)\, .
\end{equation}
Here $u = (s,v) \in \R\times V$ and 
\begin{equation}\label{tr}
 \lt x\rt= \frac{x}{|x| \vee 1} = \begin{cases}
    x    &  \text{if } |x|<1\, ,  \\ 
    \mathrm{sgn}(x)    & \text{otherwise}
\end{cases}  
\end{equation}
is a truncation function. Given $b$, $\sigma^2$, $\kappa$, and $\{\rho_{u}\}_{u\in \R\times V}$ as above, there is an independently scattered random measure $\Lambda$ satisfying \eqref{Lambda} by Kolmogorov's Extension Theorem. According to \cite[Theorem 2.7]{Rosinski_spec}, the stochastic integral $\int_{\R\times V} f(u) \, \Lambda(du)$ of a measurable deterministic function $\morf{f}{\R\times V}{\R}$  exists  	if and only if
\begin{enumerate}[(a)]
   \item \label{i1} $\int_{\R\times V} |B(f(u), u)| \, \kappa(du) < \infty$,
      \item \label{i2} $\int_{\R\times V} K(f(u), u) \, \kappa(du) < \infty$,
    %  \item \label{i3} $\int_{\R\times V}f(u)^2 \sigma^2(u)\,\kappa(du)<\infty$, 
\end{enumerate} 
where
\begin{align} \label{B}
B(x, u) = {}& x b(u) + \int_{\R} \big( \lt xy\rt - x \lt y\rt\big) \, \rho_{u}(dy)\quad \text{and}
\\
\label{K}
K(x, u) = {}& x^2 \sigma^2(u) + \int_{\R}  \lt xy\rt^{2} \, \rho_{u}(dy), \qquad x \in \R, \ u \in \R\times V.
\end{align}
When (a)--(b) hold, then $\int_{\R\times V} f(u)\,\Lambda(du)$ is  an infinitely divisible random variable. Moreover, if $f=f(t, \cdot)$ depends on a parameter $t\in T$, then $\big(\int_{\R\times V} f(t,u)\,\Lambda(du)\big)_{t\in T}$ is an infinitely divisible process.
%In this paper we will consider a \ca\  processes $\X=(X_t)_{t\geq 0}$ representable by some infinitely divisible random measures $\Lambda$, i.e., processes of the form
%\begin{equation}\label{def-X}
% X_t=\int_{(-\infty,t]\times V} \phi(t, u)\,\Lambda(du),  
%\end{equation}
%where $\morf{\phi}{\R\times (\R \times V)}{\R}$ is a measurable deterministic function. We assume that for every $u \in   \R\times V$, $\phi(\cdot, u)$ is \ca. This assumption is always satified when $\Lambda$ satisfies the  non-deterministic condition
%\begin{equation}\label{eq-kappa-42}
%\kappa\big(u\in \R\times V\! : \sigma^2(u)=0,\, \rho_u(\R)=0
%\big)=0, 
%\end{equation}
%see Remark \ref{remark-cadlag-42}. 

We will also use the following definitions and notation. For a \ca\ function $\morf{g}{\R_+}{\R}$, the jump size of $g$ at $t$ is defined as $\Delta g(t)=\lim_{s\uparrow t, s<t} (g(t)-g(s))$ when $t>0$ and $\Delta g(0)=0$. 
If $\morf{X}{\Omega}{[0,\infty]}$ is a measurable function, then $[X]=\{(\omega,X(\omega))\!: \omega\in \Omega,\, X(\omega)<\infty\}$ denotes the graph of $X$. (Notice that \citet{Jacod_S} write $\lt X\rt$ for the graph of $X$.)  A random set $A\subseteq \Omega\times \R_+$ is said to be evanescent if the set  $\{\omega\in \Omega\!: \exists\, t\in \R_+ \text{ such that } (\omega,t)\in A\}$ is a $\P$-null set. For two random subsets $A$ and $B$ of $\Omega\times \R_+$, we say that $A\subseteq B$ up to evanescent if  $B\setminus A$ is evanescent.  Two processes $\X=(X_t)_{t\geq 0}$ and $\Y=(Y_t)_{t\geq 0}$ are said to be indistinguishable if the set $\{(\omega, t)\!: X_t(\omega) \ne Y_t(\omega) \}$ is evanescent.  We will write $\X=\Y$ when $\X$ and $\Y$  are indistinguishable. 

\bigskip

 \section{Infinitely divisible semimartingales} \label{s-sem}
%%%%%%%%%%%%%%%%%%

%Let $\mathbf{X}=\p X$ be a \ca\ infinitely divisible process given by \eqref{def-X}, i.e.,  
%\begin{equation}\label{def_Y_phi} 
%X_t=\int_{(-\infty, t] \times V} \phi(t, u)\,\Lambda(du), 
%\end{equation}
%where $\Lambda$ is specified by \eqref{Lambda}--\eqref{tr}, $\phi(\cdot, u)$ is \ca\ for every $u=(s,v) \in \R\times V$ and let $B$ be given by \eqref{B}. 

In this section $\X=(X_t)_{t\geq 0}$ stands for a \ca\  infinitely divisible process which is representable by some infinitely divisible random measure $\Lambda$, i.e., a process of the form
\begin{equation}\label{def-X}
 X_t=\int_{(-\infty,t]\times V} \phi(t, u)\,\Lambda(du),  
\end{equation}
where $\morf{\phi}{\R_+\times (\R \times V)}{\R}$ is a measurable deterministic function and $\Lambda$ is specified by \eqref{Lambda}--\eqref{tr}. We assume that for every $u=(s,v) \in   \R\times V$, \, $\phi(\cdot, u)$ is \ca, cf. Remark~\ref{cadlag}. Let $B$ be given by \eqref{B}. We further assume that 
\begin{equation} \label{drift}
\int_{(0,t] \times V} \big| B\big(\phi(s, s, v), (s,v)\big) \big| \, \kappa(ds, dv) < \infty \quad \text{for every $t>0$.}
\end{equation}

\np
The following is the  main result of this section. 

%:  Thm 1
\begin{theorem}\label{thm1}
Under the above assumptions  $\mathbf{X}$ is a semimartingale relative to the filtration  
$\mathbb{F}^{\Lambda}=(\mathcal{F}^{\Lambda}_t)_{t \ge 0}$ if and only if
\begin{equation}\label{dec}
   X_t = X_0 + M_t + A_t, \quad t \ge 0,
\end{equation}
where $\M= \p M$ is a   semimartingale with  independent increments given by the stochastic integral
\begin{equation} \label{M}
M_t = \int_{(0,t] \times V} \phi(s,(s,v))\,\Lambda(ds,dv), \quad t\ge 0,
\end{equation}
and $\mathbf{A}= \p A$ is a predictable \ca\ process of finite variation  of the form 
\begin{equation} \label{A}
A_t = \int_{(-\infty,t] \times V} \big[\phi(t,(s,v)) -  \phi(s_{+}, (s,v))\big] \,\Lambda(ds,dv).
\end{equation}
Decomposition \eqref{dec}	 is unique in the following sense: If $\X=X_0+\M'+\A'$, where $\M'$ and $\A'$ are processes representable by $\Lambda$ such that  $\M'$ is a semimartingale with independent increments relative to $\mathbb{F}^{\Lambda}$ and $\A'$ is a predictable \ca\ process of finite variation, then $\M'=\M+g$ and $\A'=\A-g$ for some  c\`adl\`ag deterministic function $g$ of finite variation, where 
$\M$ and $\A$ are given by \eqref{M} and \eqref{A}. 

$\bf X$ is a special semimartingale if and only if \eqref{dec}--\eqref{A} hold and ${\E\abs{M_t}<\infty}$ for all $t>0$. In this case,  $(M_t-\E M_t)_{t\geq 0}$ is a  martingale and  
\begin{equation}
  X_t= X_0+ (M_t -\E M_t)+ (A_t+\E M_t),\quad t\geq 0
\end{equation}
  is the canonical decomposition of $\X$. 
\end{theorem}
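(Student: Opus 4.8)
The plan is to read the asserted equivalence as a statement about $\A:=\X-X_0-\M$, where $\M$ is given by \eqref{M}: I will show that $\M$ is always a process with independent increments that is a semimartingale, that $\A$ is automatically representable by $\Lambda$ with the kernel in \eqref{A}, and that $\X$ is a semimartingale precisely when $\A$ is of finite variation. The ``if'' direction is then immediate, since $\X=X_0+\M+\A$ is a sum of a semimartingale and a finite-variation process. For the substantive direction I first note that $\M$ has independent increments relative to $\Fi^\Lambda$: the increment $M_t-M_s=\int_{(s,t]\times V}\phi(r,(r,v))\,\Lambda(dr,dv)$ charges $\Lambda$ only over $(s,t]\times V$ and is therefore independent of $\F^\Lambda_s$ because $\Lambda$ is independently scattered. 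Existence of the integral \eqref{M} requires the two conditions governing integration against $\Lambda$: the $B$-part is precisely the standing hypothesis \eqref{drift}, while the $K$-part amounts to square-summability of the diagonal heights $\phi(\tau_j,(\tau_j,\xi_j))$, which are exactly the jumps of $\X$ at the totally inaccessible arrival times of $\Lambda$ and hence square-summable on compacts once $\X$ is a semimartingale. Since a \ca\ process with independent increments whose drift is of finite variation is a semimartingale \cite{Jacod_S}, $\M$ has all the stated properties.

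Next I would identify $\A$ and analyse its jumps. Substituting \eqref{def-X} and \eqref{M} and using right-continuity of $\phi(\cdot,u)$ on the diagonal yields \eqref{A}, whose kernel $\psi(t,(s,v)):=\phi(t,(s,v))-\phi(s_+,(s,v))$ vanishes on the diagonal $s=t$. The decisive consequence is that the arrival times of $\Lambda$ contribute nothing to the jumps of $\A$, so that $\Delta A_t=\int_{(-\infty,t)\times V}[\phi(t,(s,v))-\phi(t-,(s,v))]\,\Lambda(ds,dv)$ is an integral over the strict past and is $\F^\Lambda_{t-}$-measurable. To make this rigorous and to locate the jump times I would invoke the series representation of the jumps of a \ca\ infinitely divisible process together with its a.s.\ uniform convergence on compacts \citep[Theorem~3.1]{Ito-Nisio-D}: writing the Poissonian part of $\Lambda$ as $\sum_j\delta_{(\tau_j,\xi_j)}J_j$, the contribution of the $j$-th atom to $\A$ is $t\mapsto[\phi(t,(\tau_j,\xi_j))-\phi(\tau_j,(\tau_j,\xi_j))]J_j$, which vanishes at $\tau_j$ and afterwards jumps only at the jump times (in $t$) of the kernel $\phi(\cdot,(\tau_j,\xi_j))$; these times are announced once the atom is revealed and are therefore predictable.

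The hard part is to upgrade this jump picture to ``$\A$ is of finite variation,'' and this is where I expect the main obstacle to lie. I would argue that $\A$ carries no local-martingale part. Its continuous local-martingale part vanishes, $A^c=0$, because the diagonal-vanishing kernel picks up none of the Gaussian mass of $\Lambda$ carried on the diagonal (formally $[A^c]_t=\int_{(0,t]\times V}\psi(s,(s,v))^2\sigma^2(s,v)\,\kappa=0$). Since every jump of $\A$ occurs at a predictable time and is $\F_{t-}$-measurable, the compensator of its jump measure equals the jump measure itself; removing the finitely many large jumps on each compact, $\A$ is special and its local-martingale part $N$ is purely discontinuous. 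At a predictable jump time $\tau$ one has $\Delta N_\tau=\E[\Delta N_\tau\mid\F_{\tau-}]=0$ because $\Delta N_\tau$ is $\F_{\tau-}$-measurable, so $N$ has no jumps and hence $N=0$. Therefore $\A$ is predictable of finite variation, which proves \eqref{dec}. The genuine technical work behind this paragraph is the absolute summability of the jumps of $\A$ on compacts, which is what makes the uncompensated predictable jumps a bona fide finite-variation process; this summability is exactly the information delivered by the convergent series representation under the semimartingale hypothesis.

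For uniqueness, suppose $\X=X_0+\M'+\A'$ is another decomposition of the stated type. A process representable by $\Lambda$ with independent increments relative to $\Fi^\Lambda$ must have a kernel that does not charge $\Lambda$ strictly before the current time, which forces dependence only on diagonal values; together with $M'_0=0$ this shows $\M'$ has the diagonal form \eqref{M}, so $g:=\M-\M'$ is again representable by $\Lambda$ with a diagonal kernel and has independent increments, while $g=\A'-\A$ is predictable of finite variation. A semimartingale with independent increments that is predictable of finite variation is deterministic, since its jumps are simultaneously $\F_{t-}$-measurable and independent of $\F_{t-}$, hence constant, and its continuous part is a continuous finite-variation process with independent increments, hence deterministic; thus $g$ is a \ca\ deterministic function of finite variation and $\M'=\M+g$, $\A'=\A-g$. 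Finally, since $\A$ is predictable of finite variation, $\X$ is special if and only if the independent-increment part $\M$ is a special semimartingale, and for a process with independent increments this holds if and only if $\E|M_t|<\infty$ for every $t>0$. In that case $t\mapsto\E M_t$ is deterministic and of finite variation (its variation being controlled by \eqref{drift} and the integrability), $(M_t-\E M_t)_{t\ge0}$ has mean zero and independent integrable increments and is therefore a martingale, and $(A_t+\E M_t)_{t\ge0}$ is predictable of finite variation; hence $X_t=X_0+(M_t-\E M_t)+(A_t+\E M_t)$ exhibits a true martingale and a predictable finite-variation part, which by uniqueness of the canonical decomposition \cite{Jacod_S} is the canonical decomposition of $\X$.
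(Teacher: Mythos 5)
There is a genuine gap, and it sits exactly where you predicted the difficulty would lie: your argument that $\A$ carries no local-martingale part does not work. The formal computation ``$[A^c]_t=\int\psi(s,(s,v))^2\sigma^2(s,v)\,d\kappa=0$'' presupposes the conclusion: $\A$ is a Volterra-type integral, not an independent-increment process, so its quadratic variation is not read off the diagonal values of its kernel; in fact $A^c=X^c-M^c$, and the vanishing of $A^c$ is precisely the assertion that the diagonal integral \eqref{M} captures the entire continuous-martingale part of $\X$. When $\Lambda$ has a Gaussian component this is the whole content of the Gaussian (Stricker-type) theory, and your soft argument would ``prove'' Stricker's theorem for free --- a red flag. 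The paper does not attempt this: it splits $\Lambda=\Lambda_G+\Lambda_P$ (Case~3), invokes Basse's theorem \cite[Theorem~4.6]{Andreas2} for the Gaussian part, and in the purely non-Gaussian case replaces your ``predictable jumps, hence $N=0$'' reasoning by a structural fact you never establish: using the martingale representation property of $\Fi^\Lambda$ \cite[III, 1.14(b)]{Jacod_S}, \emph{every} $\Fi^\Lambda$-local martingale is purely discontinuous with jumps supported in $(\Omega\times J)\cup\bigcup_k[\tau_k]$, $[\tau_k]\subseteq[T^{1,c}_k]$ (Step~4), after which the martingale parts are identified by matching jumps, with a predictable-projection argument at the fixed times $t_k\in J$ (Step~5). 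Your claims that all jumps of $\A$ occur at predictable times and are $\F^\Lambda_{t-}$-measurable are asserted rather than proved; the paper obtains the $\F^\Lambda_{t_k-}$-measurability of $A_{t_k}$ only at \emph{fixed} times, where the diagonal-vanishing kernel lets one shrink the domain to $(-\infty,t_k)\times V$, and it nowhere needs (or proves) such a statement at random kernel-jump times.

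Two further steps are materially incomplete. First, for the existence of \eqref{M} you pass from pathwise square-summability of the jumps of $\X$ to the deterministic $K$-integrability condition in one sentence; the paper's Step~3 needs symmetrization, a.s.\ uniform convergence of the series \cite{Ito-Nisio-D}, viewing the jump heights as a Rademacher element of $\ell^2$ and applying \cite[Theorem~4.8]{Talagrand} to get integrability, and then \cite[Theorem~4.1]{Rosinski_series_point} to convert a.s.\ finiteness of the sum into finiteness of the integral. Moreover your assertion that the arrival times of $\Lambda$ are totally inaccessible is false when $\kappa(\{t\}\times V)>0$; the paper must segregate fixed times ($T^{1,d}_i$, the set $J$) from the continuously distributed ones ($T^{1,c}_i$) throughout Steps~2--5. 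Second, in the uniqueness argument your claim that a representable independent-increment process ``must have a kernel depending only on diagonal values'' is unjustified and overshoots: it would force $\M'=\M$ and $g=0$, contradicting the statement being proved. The paper instead shows $(\M,\M')$ is \emph{jointly} a process with independent increments, using joint infinite divisibility and a Hudson--Tucker pairwise-to-blockwise independence argument \cite{Hudson-Trucker}, so that $\overline\M=\M'-\M=\A-\A'$ is a predictable semimartingale with independent increments, and then concludes it is deterministic by truncating its jumps and applying \cite[II, 5.10]{Jacod_S} together with \cite[I, 3.16]{Jacod_S}; your concluding heuristic (``predictable FV process with independent increments is deterministic'') is in the right spirit, but without the joint-independence step you do not even know that $\M'-\M$ has independent increments.
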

\bigskip

In the next section we use Theorem~\ref{thm1}  to characterize the semimartingale property of various infinitely divisible processes with stationary increments. In the  following we conduct  the proofs of Theorems~\ref{thm1} and \ref{Hida}, but first  we  consider two remarks and an example.

\begin{remark}\label{cadlag}
{\rm 
If $\X$ given by \eqref{def-X} is a semimartingale relative $\Fi^\Lambda$, and $\Lambda$ satisfies the  non-deterministic condition
\begin{equation}\label{eq-kappa-42}
\kappa\big(u\in \R\times V\! : \sigma^2(u)=0,\, \rho_u(\R)=0
\big)=0, 
\end{equation}
then $\phi$ can be chosen such that $\phi(\cdot, u)$ is \ca\ for every $u=(s,v) \in   \R\times V$. The proof of this statement is given in the Appendix A.
}
\end{remark}

\begin{remark}\label{remark-sym}
{ \rm
Condition~\eqref{drift} is always satisfied when $\Lambda$ is symmetric. Indeed, in this case $B\equiv 0$. 
}
\end{remark}

%\begin{remark}
%{ \rm
%Stricker's theorem \cite{Stricker_gl} says that Gaussian semimartingales are those who admit a decomposition into a Gaussian martingale with independent increments and a predictable process of finite variation. Our theorem is an exact extension of this result to the infinitely divisible case.
%}
%\end{remark}

%\begin{remark}
%{ \rm
%There is a slight inconsistency in the notations used in \eqref{decomp-X} and \eqref{dec}. 
%In \eqref{dec} $\M$ is a semimartingale with independent increments, which is  a martingale when centered, in which case \eqref{decomp-X} and \eqref{dec} coincide. However, if $\M$ does not have zero expectation, then further decomposition of $\M$ into a martingale and a process of finite variation is needed to obtain \eqref{decomp-X}, but this is standard, see e.g.\ \cite[Theorem~19.2]{Sato}. 
%}
%\end{remark}

\begin{example}
Consider the setting in Theorem~\ref{thm1} and suppose that  $\Lambda$ is an  $\alpha$-stable random measure and $\alpha\in (0,1)$.  Then $\bf X$ is a semimartingale with respect to $\Fi^\Lambda$ if and only if it is  of finite variation. This follows by Theorem~\ref{thm1} because the process $\bf M$ given by \eqref{M} is of finite variation. 
Indeed,  the L\'evy--It\^o decomposition of $\M$ (\cite[II, 2.34]{Jacod_S})  combined with  \cite[II, 1.28]{Jacod_S} show that $\M$ is of finite variation. 

\end{example}

\medskip

\begin{proof}[Proof of Theorem~\ref{thm1}]

The sufficiency is obvious. To show the necessary part we need to show that a semimartingale $\X$ has a decomposition \eqref{dec} where the processes $\bf M$ and $\bf A$ have the stated properties. We will start by considering the case where $\Lambda$ does not have a Gaussian component, i.e.\ $\sigma^2=0$. We may and will assume that $\phi(0,u)=0$ for all $u$ corresponding to $X_0=0$ a.s., and that $\phi(t,(s,v))=0$ for $s>t$ and $v\in V$. 

\medskip

\emph{Case 1. $\Lambda$ has no Gaussian component}:    We  divide the proof  into the following six steps. 

\medskip

\emph{Step~1}: 
Let $X^0_t = X_t - \beta(t)$, with 
\begin{equation}
 \beta(t) = \int_{U} B\big(\phi(t, u), u\big) \, \kappa(d u),\qquad U=\R\times V. 
\end{equation}
We will give the  series representation for $\X^0$ that will be crucial for our considerations. To this end, define for $s\neq 0$ and $u \in U=\R\times V$ 
\begin{equation}\label{}
R(s, u) = \begin{cases}
\inf\{ x>0: \rho_u(x,\infty) \le s\} \qquad\qquad   & \text{if } s>0, \\ 
\sup\{ x<0: \rho_u(-\infty, x) \le -s\}  & \text{if } s<0. 
\end{cases}
\end{equation}
Choose a probability measure $\tilde \kappa$ on $U$ equivalent to $\kappa$, and let $h(u)= \frac{1}{2}(d \tilde \kappa/d\kappa)(u)$.
By an extension of  our probability space if necessary, \citet{Rosinski_series_point}, Proposition~2 and Theorem~4.1, shows that there exists three independent sequences
$(\Gamma_i)_{i\in \N}$,  $(\epsilon_i)_{i\in \N}$,   and $(T_i)_{i\in \N}$, where $\Gamma_i$ are partial sums of i.i.d.\ standard exponential random variables, $\epsilon_i$ are i.i.d.\ symmetric Bernoulli random variables, and $T_i=(T_i^1, T_i^2)$ are i.i.d.\ random variables in $U$ with the common distribution $\tilde \kappa$, such that for every $A\in \s$, 
\begin{equation}\label{rep_lambda}
 \Lambda(A)= \nu_0(A)+  \sum_{j=1}^\infty   \big[R_j\1_A(T_j)-\nu_j(A)\big] \qquad \text{a.s.}
\end{equation}
where $R_j=R(\epsilon_j\Gamma_j h(T_j),T_j)$, \ $\nu_0(A)= \int_A b(u) \, \kappa(d u)$,  and for $j\ge 1$
\begin{equation}\label{}
\nu_j(A) = \int_{\Gamma_{j-1}}^{\Gamma_j} \E \lt R(\epsilon_1 r h(T_1),T_1)\rt \1_A(T_1) \, dr.
\end{equation}
It follows by the same argument that 
\begin{equation} \label{sY0}
X^0_t = \sum_{j=1}^{\infty} \big[ R_j \phi(t, T_j) - \alpha_j(t) \big] \qquad \text{a.s.},
\end{equation}
where 
\begin{equation} \label{}
\alpha_j(t) = \int_{\Gamma_{j-1}}^{\Gamma_j} \E \lt R(\epsilon_1 r h(T_1),T_1) \phi(t, T_1)\rt  \, dr.
\end{equation}

\medskip

\emph{Step~2}: Set $J=\{t\geq 0\!: \kappa(\{t\}\times V)>0\}$, 
\begin{equation}
T^{1,c}_i=T_i^1\1_{\{T_i^1\in \R_+\setminus J\}}\qquad \text{and}\qquad  T^{1,d}_i=T_i^1\1_{\{T_i^1\in J\}}.
\end{equation}
 Since $\kappa$ is a $\sigma$-finite measure the set  $J$ is countable. Furthermore,  $\P(T^{1,c}_i=x)=0$ for all $x>0$ and $T^{1,d}_i$ is discrete. 
We will show that  for every $i \in \N$
\begin{equation} \label{eq:T2}
\Delta X_{T_i^{1,c}} = R_i \phi(T_i^{1,c}, T_i) \quad \text{a.s.\ }
\end{equation}
%where  $T_i=(T^1_i,T^2_i)$ are  given in \eqref{rep_lambda}.
 Since $\X$ is \ca,  the series 
 \begin{equation} \label{eq:Yep}
X_t^0=\sum_{j=1}^{\infty} \big[ R_j \phi(t, T_j) - \alpha_j(t) \big] 
\end{equation} 
converges uniformly for $t$ in compact intervals a.s., cf.\ \citet[Corollary~3.2]{Ito-Nisio-D}. 
 % \begin{lemma}\label{L1}
%Suppose that the process $\X= \p Y$ given by \eqref{def_Y_phi} is \ca. 
%Functions
%\begin{equation} \label{}
%\beta(t) = \int_{\tV} B\big(\phi(t, \tv), \tv\big) \, \kappa(d\tv),
%\end{equation}
%\begin{equation} \label{}
%\gamma(t) = \int_{\tV} \phi(t, \tv)\sigma^2(\tv) \, \kappa(d\tv),
%\end{equation}
%and 
%\begin{equation} \label{}
%\zeta(\tv) = \int_{\tV} \int_{\R} \lt\phi(t, \tv)y\rt^2 \, \rho_{\tv}(dy) \kappa(d\tv)
%\end{equation}
%are \ca.
%\end{lemma}
Moreover, $\beta$ is \ca, see \cite[Lemma~3.5]{Ito-Nisio-D},  and by Lebesgue's dominated convergence theorem it follows that 
$\alpha_j$, for $j\in \N$, are \ca\ as well. Therefore, with probability one,
\begin{equation} \label{}
\Delta X_t = \Delta \beta(t) + \sum_{j=1}^{\infty} \big[ R_j \Delta \phi(t, T_j) - \Delta \alpha_j(t) \big] \quad \text{for all} \ t>0.
\end{equation}
Hence, for every $i \in \N$ almost surely
\begin{equation} \label{eq:T1}
\Delta X_{T_i^{1,c}} = \Delta \beta(T_i^{1,c}) + \sum_{j=1}^{\infty} \big[ R_j \Delta \phi(T_i^{1,c}, T_j) - \Delta \alpha_j(T_{i}^{1,c}) \big] 
\end{equation}
Since $\beta$ has at most  countable many   discontinuities (it is c\`adl\`ag), with probability one $T_i^{1,c}$ is a continuity point of $\beta$ since $\P(T^{1,c}_i=x)=0$ for all  $x>0$. Hence $\Delta \beta(T_i^{1,c})=0$ a.s. Since $(\Gamma_j)_{j\in \N}$ are independent of $T_i^{1,c}$, the argument used for $\beta$ also yields $\Delta \alpha_j(T_i^{1,c})=0$ a.s. By \eqref{eq:T1} this proves
\begin{equation}\label{eq:T1-a}
 \Delta X_{T_i^{1,c}} = 
\sum_{j=1}^{\infty} R_j \Delta \phi(T_{i}^{1,c}, T_j).
\end{equation}
Furthermore, for $i\ne j$ we get
\begin{align} \label{}
\P( \Delta \phi(T_i^{1,c}, T_j) \ne 0) &= \int_{U} \P( \Delta \phi(T_i^{1,c}, T_j) \ne 0\, |\, T_j= u) \, \tilde \kappa(d u) \\
&= \int_{U} \P( \Delta \phi(T_i^{1,c}, u) \ne 0 ) \, \tilde \kappa(d u) = 0
\end{align}
again because $\phi(\cdot, u)$ has only countably many jumps and the distribution of $T_i^{1,c}$ is continuous on $(0,\infty)$. 
If $j=i$ then
\begin{equation} \label{}
\Delta \phi(T_i^{1,c}, T_i) = \lim_{h \downarrow 0,\, h>0} \big[\phi(T_i^{1,c}, (T_i^1, T_i^2)) - \phi(T_i^{1,c}-h, (T_i^1, T_i^2)) \big] = \phi(T_i^{1,c}, T_i) 
\end{equation}
as $\phi(t,(s,v))=0$ whenever $t<s$ and $v\in V$. This simplifies \eqref{eq:T1-a} to \eqref{eq:T2}.

\medskip

\emph{Step~3}: Next we will show that 
$\M$, defined  in \eqref{M}, is a well-defined \ca\ process satisfying
\begin{equation}\label{eq-MY}
 \Delta M_{T^{1,c}_i} =\Delta X_{T^{1,c}_i}\quad \text{a.s.\ for all }i\in \N.
\end{equation} 
Since any semimartingale has finite quadratic variation, we have in  particular
\begin{align}\label{eq-fit-var72}
\sum_{0<s\le t} \big(\Delta X_s\big)^2<\infty\qquad \text{a.s.}
% \ge \sum_{i:\,T^1_i\in (0,t]} \big(\Delta X_{T_i^1}\big)^2 
%= 
%\sum_{i = 1}^{\infty} \big[R_i \phi(T_i^1, T_i)\big]^2 \1_{\{0<T_i^1 \le t\}},
\end{align}
%where the last equation employs \eqref{eq:T2}. 
Let $\X'$ be an  independent copy of $\X$ and set $\tilde \X=\X-\X'$. Let  $\bar R_j= R(\epsilon_j \Gamma_j h(T_j)/2,T_j)$ and $(\xi_j)_{j\in \N}$ be  i.i.d.\ symmetric Bernoulli random variables 
defined on a probability space $(\Omega',\F',\P')$.  By   \citet[Theorem 2.4]{Rosinski_On_Series} it follows that for all $t\geq 0$ the series 
\begin{equation}
\bar X_t=\sum_{j=1}^\infty \xi_j
\bar R_j\phi(t,T_j)
\end{equation}
defined on $\Omega\times \Omega'$ converge a.s.\ under $\P\otimes \P'$ and  $\bar \X$ equals $\tilde \X$ in finite  dimensional  distributions. 
  Thus $\bar \X$ has a c\`adl\`ag modification satisfying 
   \begin{equation}\label{eq7242}
 \sum_{s\in (0,t]} \big(\Delta \bar X_s)^2<\infty\qquad \P\otimes\P'\text{-a.s.}
 \end{equation}
By \citet[Corollary~3.2]{Ito-Nisio-D}, we have $\P\otimes \P'$-a.s.\ for all $t\geq 0$ that 
 \begin{equation}\label{tr63}
 \Delta \bar X_t=\sum_{j=1}^\infty \xi_j \bar R_j\Delta \phi(t,T_j).
 \end{equation}
By \eqref{eq7242} and \eqref{tr63}  we have  for $\P$-a.a.\ $\omega\in \Omega$ that 
\begin{align}
 \sum_{s\in A} Y_s^2< \infty \qquad \P'\text{-a.s.,}\qquad 
 {}&\text{where}\qquad 
Y_s=\sum_{j=1}^\infty a(s,j) \xi_j,\\
a(s,j)=  \bar R_j(\omega)\Delta\phi(s,T_j(\omega))\qquad 
{}&\text{and}\qquad  A=\cup_{j\in \N}\{s\in (0,t]\!: \Delta \phi(s,T_j(\omega))\neq 0\}.   
\end{align}
For a fixed $\omega\in \Omega$ as above, $A$ is a countable deterministic  set and ${\bf Y}=(Y_s)_{s\in A}$ is a Bernoulli/Rademacher  random element in $\ell^2(A)$  defined on $(\Omega',\F',\P')$. By \citet[Theorem~4.8]{Talagrand}, $\E'[\|Y\|_{\ell^2(A)}^2]<\infty$ which implies that 
\begin{equation}\label{eq842}
\infty>\E'\Big[ \sum_{s\in A} Y_s^2\Big]=\sum_{s\in A} \E'[ Y_s^2]=\sum_{s\in A} \sum_{j=1}^\infty a(s,j)^2= \sum_{j=1}^\infty \Big(
\sum_{s\in A} a(s,j)^2\Big).
\end{equation}
Eq.~\eqref{eq842} implies that $\P$-a.s.
\begin{equation}
\infty>\sum_{i:\,T_i^1\in (0,t]}  |\bar R_i\Delta \phi(T^1_i,T_i)|^2=\sum_{i:\,T_i^1\in (0,t]}  |\bar R_i\phi(T^1_i,T_i)|^2.
\end{equation}
Put for $t,r \ge 0$ and $(\epsilon,s,v) \in \{-1,1\} \times \R \times V$
\begin{equation} \label{}
H(t; r, (\epsilon,s,v)) = R\big(\epsilon r h(s,v)/2, (s,v)\big) \phi(s,(s,v)) \1_{\{0< s \le t\}}.
\end{equation}
The above bound shows that for each $t\ge 0$
\begin{equation} \label{}
\sum_{i=1}^{\infty}  |H(t; \Gamma_i, (\epsilon_i,T_i^1,T_i^2)) |^2 < \infty \quad \text{a.s.}
\end{equation}
That implies, by  \citet[Theorem 4.1]{Rosinski_series_point}, that  the following limit is finite
\begin{align*}
\lim_{n\to \infty} \int_0^n \E \lt H(t; r, (\epsilon_1,T_1^1,T_1^2))^2 \rt \, dr = \int_0^{\infty} \E \lt H(t; r, (\epsilon_1,T_1^1,T_1^2))^2 \rt \, dr.
\end{align*}
Evaluating this limit we get
\begin{align*}
\infty &> \int_0^{\infty} \E \lt R(\epsilon_1 r h(T_1)/2, T_1) \phi(T_i^1, T_i) \1_{\{0<T_i^1 \le t\}}\rt^2 \, dr \\
& = \int_0^{\infty} \int_{\R\times V} \E\lt R(\epsilon_1 r h(s,v)/2, (s,v)) \phi(s, (s,v)) \1_{\{0< s \le t\}}\rt^2 \, \tilde\kappa(ds,dv) \,dr \\
& = 4 \int_0^{\infty} \int_{\R\times V} \E\lt R(\epsilon_1 z, (s,v)) \phi(s, (s,v)) \1_{\{0< s \le t\}}\rt^2 \, \kappa(ds,dv) \,dz \\
& = 2\int_{\R\times V} \int_{\R} \lt x \phi(s, (s,v)) \1_{\{0< s \le t\}}\rt^2 \, \rho_{(s,v)}(dx) \,\kappa(ds,dv) \\
& = 2\int_{(0,t] \times V} \int_{\R} \min\{ |x \phi(s, (s,v))|^2, 1\} \, \rho_{(s,v)}(dx)\, \kappa(ds,dv). 
\end{align*}
Finiteness of this integral in conjunction with \eqref{drift} yield the existence of the stochastic integral
\begin{equation} \label{}
M_t = \int_{(0,t] \times V} \phi(s, s,v) \, \Lambda(ds,dv) 
\end{equation}
by \eqref{i1} and \eqref{i2} on page~\pageref{i1}. 
The fact that $\M$ has independent increments is obvious since $\Lambda$ is independently scattered. Furthermore, $\M$ is c\`adl\`ag in probability by the continuity properties of  stochastic integrals, and by Lemma~\ref{lem-cad-mod}  it has a c\`adl\`ag modification  which will also be denoted by $\M$.
%cf.\ \cite[Theorem~11.5]{Sato}. 
 Let $(\zeta_t)_{t\geq 0}$ be the shift component of $\M$.  By   \eqref{drift} and the fact that 
\begin{equation}\label{def-a}
 \zeta_t=\int_{(0,t]\times V} B\big(\phi(s,s,v),(s,v)\big)\,\kappa(ds,dv),\qquad t\geq 0,
\end{equation}
see \cite[Theorem~2.7]{Rosinski_spec}, we deduce that  $(\zeta_t)_{t\geq 0}$ is of finite variation.   Therefore the independent increments of $\M$ and \cite[II, 5.11]{Jacod_S} show that  $\M$ is a semimartingale. For $t\geq 0$ we can  write $M_t$ as a series using the series representation \eqref{rep_lambda} of $\Lambda$. It follows that
\begin{equation} \label{se-M}
M_t =\zeta_t+ \sum_{i=1}^{\infty} \big[ R_i \phi(T_i^1, T_i) \1_{\{0<T_i^1 \le t\}} - \gamma_j(t)\big]
\end{equation}
where 
\begin{equation}\label{}
 \gamma_j(t)=\int_{\Gamma_{j-1}}^{\Gamma_j} \E \lt R(\epsilon_1 r h(T_1),T_1) \phi(T^1_1,T_1)\1_{\{ 0<T^1_j\leq t\}})\rt  \, dr. 
\end{equation}
%The assumption \eqref{Lambda2} yields that  $\zeta$ and $\gamma_j$ are continuous. Moreover, 
By arguments as above we have $\Delta M_{T^{1,c}_i}= R_i\phi(T^{1,c}_i,T_i)$ a.s.\ and hence by \eqref{eq:T2} we obtain \eqref{eq-MY}. 

\medskip

\emph{Step~4}:  In the following we will show the existence of  a sequence $(\tau_k)_{k\in \N}$ of totally inaccessible stopping times such that  all local martingales $\mathbf{Z}=(Z_t)_{t\geq 0}$  with respect to $\Fi^\Lambda$ are purely discontinuous and up to evanescent
\begin{equation}\label{con-stop}
\{\Delta {\bf Z}\neq 0\}\subseteq (\Omega\times J)\cup (\cup_{k\in \N}[\tau_k]),\qquad \cup_{k\in \N}[\tau_k] \subseteq \cup_{k\in \N}[T^{1,c}_k].
\end{equation}
Recall that $\{\Delta {\bf Z}\neq 0\}$ denotes the random set $\{(\omega, t)\in \Omega\times \R_+\!: Z_t(\omega)\neq 0\}$ and  $J$ is the countable subset of $\R_+$ defined in Step~2. 
Set $\V_0=\{A\in \V:A\subseteq V_k\text{ for some }k\in \N\}$ where $(V_k)_{k\in \N}$ is given in the Preliminaries.  
To show \eqref{con-stop} choose a sequence $(B_k)_{k\geq 1}\subseteq \V_0$ of disjoint sets which generates $\mathcal V$ and for all  $k\in \N$ let ${\bf U}^k=(U^k_t)_{t\geq 0}$ be given by 
\begin{equation}\label{}
  U^k_t=\Lambda((0,t]\times B_k).
\end{equation} 
For $k\in \N$,  ${\bf U}^k$ is a c\`adl\`ag in probability infinitely divisible process with independent increments and has therefore a \ca\ modification by Lemma~\ref{lem-cad-mod} (which will also be denoted ${\bf U}^k$).  Hence   ${\bf U}=\{(U_t^k)_{k\in\N}:t\in \R_+\}$ is a  \ca\ $\R^\N$-valued process with no Gaussian component. Let $E=\R^\N\setminus\{0\}$. Then $E$ is a Blackwell space and  $\mu$ defined  by 
   \begin{equation}\label{}
 \mu(A)=\sharp\big\{t\in\R_+\!:(t,\Delta U_t)\in A\big\},\qquad A\in \mathscr{B}(\R_+\times E)
\end{equation} 
 is an extended Poisson random measure on $\R_+\times E$, in the sense of \cite[II, 1.20]{Jacod_S}. Let $\nu$ be the  intensity measure of $\mu$. We have that  $\Fi^\Lambda$ is the least filtration for which $\mu$ is an optional random measure. Thus according to   \cite[III, 1.14(b) and the remark after III, 4.35]{Jacod_S},  $\mu$ has the martingale representation property, that is for all real-valued local martingales ${\bf Z}=(Z_t)_{t\geq 0}$ with respect to $\Fi^\Lambda$ there  exists  a predictable function  $\phi$ from $\Omega\times \R_+\times E$ into $\R$ such that 
 \begin{equation}\label{mar-rep-N}
  Z_t = \phi*(\mu-\nu)_t,\quad t\geq 0 
 \end{equation} 
 (in \eqref{mar-rep-N} the symbol $*$ denotes integration with respect to $\mu-\nu$ as in \cite[II, 1.]{Jacod_S}). 
 Note that $\{t\geq 0\!:\nu(\{t\}\times E)>0\}\subseteq J$. 
By definition, see \cite[II, 1.27(b)]{Jacod_S},   $\bf Z$ is a purely discontinuous local martingale and $\Delta Z_t(\omega)=\phi(\omega,t,\Delta U_t(\omega))\1_{\{\Delta U_t(\omega)\neq 0\}}$ for $(\omega,t)\in \Omega\times J^c$ up to evanescent, which shows that 
 \begin{equation}\label{}
 \{ \Delta {\bf Z}\neq 0\}\subseteq (\Omega\times J)\cup \{ \Delta {\bf U}\neq 0\}\qquad \text{up to evanescent.}
 \end{equation}
    Lemma~\ref{lem-fixed-dis-234} and a
% All real-valued continuous in probability \ca\  processes ${\bf Y}=(Y_t)_{t\geq 0}$  with independent increments  is quasi-left continuous cf.\ \cite[II, 5.12]{Jacod_S}, and hence there exists a sequence of totally inaccessible stopping times  that exhausts  the jumps of $\bf Y$, cf.\ \cite[I, 2.2]{Jacod_S}.  
% 
diagonal argument show the existence of  a sequence of totally inaccessible stopping times $(\tau_k)_{k\in \N}$ such that up to evanescent
    \begin{equation}\label{ex_sw}
\{\Delta {\bf U}\neq 0\}= (\Omega\times J)\cup (\cup_{k\in \N} [\tau_k]).
\end{equation}  
Arguing as in Step~2  with $\phi(t,(s,v))=\1_{(0, t]}(s)\1_{ B_k}(v)$ shows that  with probability one
\begin{equation}\label{jumps-U-12}
  \Delta U^k_t
  %=\sum_{i=1}^\infty R(\Gamma_i h(\tilde Z_i),Z_i)\1_{\{Z^1_i=t\}}\1_{\{Z_i^2\in B_k\}},
 = \Delta \zeta(t) + \sum_{j=1}^{\infty} \big[ R_j  \1_{\{t=T^1_j\}} \1_{\{T^2_j\in B_k\}}- \Delta \gamma_j(t) \big] \quad \text{for all} \ t>0
\end{equation} 
where 
\begin{align}\label{}
 \xi(t)={}&  \int_{\R\times V} \1_{\{0\leq s\leq t\}}\1_{\{v\in B_k\}}b(s,v)\, \kappa(ds,dv),\\ 
 \gamma_j(t)={}&  \int_{\Gamma_{j-1}}^{\Gamma_j} \E \lt R(\epsilon_1 r h(T_1),T_1) \1_{\{ T^1_j\leq t\}}\1_{\{T^2_j\in B_k\}})\rt  \, dr.
\end{align}
The functions $\xi$ and $\gamma_j$, for $j\in \N$,  are continuous on $\R_+\setminus J$  and  hence with probability one
\begin{equation}\label{eqdasfas}
 \Delta U^k_t= \sum_{j=1}^{\infty} R_j  \1_{\{t=T^1_j\}} \1_{\{T^2_j\in B_k\}} \qquad \text{for all }t\in \R_+\setminus J.
\end{equation}
Since each $\tau_k$ is totally inaccessible and $J$ is countable, we have $\P(\tau_k\in J)=0$. Hence by \eqref{eqdasfas} we conclude that 
\begin{equation}\label{sub-T}
 \cup_{k\in \N} [\tau_k]\subseteq \cup_{k\in \N} [T^{1,c}_k]\qquad \text{up to evanescent.} 
\end{equation}
This completes the proof of Step~4.

\medskip 

\emph{Step~5}:
Fix $r\in \N$ and let  $\X'=(X'_t)_{t\geq 0}$ be given by 
\begin{equation}\label{}
 X' _t=X_t-\sum_{s\in (0,t]} \Delta X_s\1_{\{\abs{\Delta X_s}>r\}}.
\end{equation}
We will show that $\X'$ is a special semimartingale with martingale component $\M'=(M'_t)_{t\geq 0}$ given by 
\begin{equation}\label{def-M'}
M_t'=\tilde M_t-\E \tilde M_t \quad \text{where}\quad \tilde M_t=M_t-\sum_{s\in (0,t]}\Delta M_s \1_{\{\abs{M_s}>r\}}.
\end{equation}
 Recall that $\bf M$ is given by \eqref{M}. By \cite[II, 5.10 c)]{Jacod_S} it follows that $\M'$ is a martingale (and well-defined). 
The process   $\X'$ is a special semimartingale since its jumps are bounded by $r$ in absolute value; denote by $\bf W$ and $\bf N$ the finite variation and martingale compnents, respectively,  in the canonical decomposition $\X'=X_0+\mathbf{W}+\mathbf{N}$ of $\X'$. That is, we want to show  that $\mathbf{N}= \M'$. By \eqref{eq-MY} we have   for all $i\in \N$ 
  \begin{equation}\label{eq-M'}
    \Delta M'_{T_i^{1,c}}=\Delta M_{T_i^{1,c}}\1_{\{\abs{\Delta M_{T_i^{1,c}}}\leq r\}}=\Delta X_{T_i^{1,c}}\1_{\{\abs{\Delta X_{T_i^{1,c}}}\leq r\}}=\Delta X'_{T_i^{1,c}} \qquad \text{a.s.}
  \end{equation}
Let $(\tau_k)_{k\in \N}$ be a sequence of 
 totally inaccessible stopping times satisfying \eqref{con-stop} for both ${\bf Z}={\bf N}$ and ${\bf Z}=\M'$. Since $\textbf W$ is predictable and $\tau_k$ is a totally inaccessible stopping time we have that $\Delta W_{\tau_k}=0$ a.s.\ cf.\ 
 \cite[I, 2.24]{Jacod_S} and hence 
   \begin{equation}\label{eq_N_j}
 \Delta N_{\tau_k}=\Delta X_{\tau_k}'-\Delta W_{\tau_k}=\Delta X_{\tau_k}'=\Delta M_{\tau_k}'\qquad \text{a.s.}
 \end{equation}
 the last equality follows by \eqref{eq-M'} and the second inclusion in \eqref{con-stop}. 
 
Since  $J$ is countable we may find a set $K\subseteq \N$ such that $J=\{t_k\}_{k\in K}$.
 Next we will show that for all $k\in K$
 \begin{equation}\label{eq-683}
 \Delta N_{t_k}=\Delta M_{t_k}\qquad \text{a.s.} 
 \end{equation}
  By linearity,   $\A$, define in \eqref{A},   is a well-defined \ca\ process. For all $k\in K$ we have almost surely
\begin{align}
A_{t_k}={}&  \int_{(-\infty,t_k]\times V} \big[\phi(t_k,(s,v))-\phi(s,(s,v))\big]\,\Lambda(ds,dv)
\\ = {}&  \int_{(-\infty,t_k)\times V} \big[\phi(t_k,(s,v))-\phi(s,(s,v))\big]\,\Lambda(ds,dv)
%\\  =  {}& \sum_{j:\,t_j\leq t_l} Z_{l,j} =
%\sum_{j:\,t_j< t_l} Z_{l,j}
\end{align}
which shows that $A_{t_k}$ is $\F_{t_k-}^\Lambda$-measurable.  Define a process ${\bf Z}=(Z_t)_{t\geq 0}$ by 
%Consider the process $Z_{t}=(\Delta A_{t_k}-\Delta B_{t_k}$ is $\F^\Lambda_{t_k-}$-measurable. 
%Define a process $\bf V$ by 
%\begin{equation}
%V_t=\sum_{s\in (0,t]\cap I} f(\Delta Z_s)
%\end{equation}
%where $f(x)=x^2\mathrm{sign}(x)$. $f$ is bijective. Process $\bf V$ is of finite variation, for all $t\notin I$, $\Delta V_t=0$  and for $t\in I$, $\Delta V_t=f(\Delta Z_t)=f(\Delta B_t-\Delta A_t)$ is $\F_{t-}^\Lambda$-measurable. Thus $\bf V$ is a predictable process. 
%The representation 
\begin{equation}\label{def-pro-z73}
Z_t=\sum_{k\in K} \big(\Delta A_{t_k}-\Delta W_{t_k}\big)\1_{\{t=t_k\}}.
\end{equation}
 Since $\Delta A_{t_k}-\Delta W_{t_k}$ is $\F_{t_k-}^\Lambda$-measurable for all $k\in K$, \eqref{def-pro-z73} shows that  $\bf Z$ is a predictable process. Let  $^{p} \Y$ denote  the predictable projection of any measurable process $\Y$, see \cite[I, 2.28]{Jacod_S}. Since $\bf Z$ is predictable   
\begin{equation}\label{eq-5823}
{\bf Z}=\,\! ^p {\bf Z}=\,\! ^p\big(\1_{\Omega\times J}(\Delta \A+\Delta {\bf W})\big) = \1_{\Omega\times J}  \, ^p (\Delta \A-\Delta {\bf W})=\1_{\Omega\times J} \, ^p (\Delta \M'-\Delta {\bf N})=0
\end{equation}
where  the third equality follows by \cite[I, 2.28(c)]{Jacod_S} and the fact that  $\Omega\times J$ is a predictable set,  the last equality follows by \cite[I, 2.31]{Jacod_S} and  the fact that $\M'$ and $\bf N$ are local martingales.  
Eq.~\eqref{eq-5823} shows that $\Delta A_t=\Delta W_t$ for all $t\in J$, which implies \eqref{eq-683}. 
 
 By \eqref{eq_N_j}, \eqref{eq-683} and the fact that 
 \begin{equation}\label{jumps-2324}
  \{\Delta {\bf N}\neq 0\}\subseteq (\Omega\times J)\cup (\cup_{k\in \N} [\tau_k]),\quad  \{\Delta \M'\neq 0\} \subseteq (\Omega\times J)\cup (\cup_{k\in \N} [\tau_k]) 
\end{equation}
we have shown that $\Delta {\bf N}=\Delta \M'$.
 By Step~4,  $\bf N$ and $\M'$ are purely discontinuous local martingale which implies that  ${\bf N}=\M'$, cf.\ \cite[I, 4.19]{Jacod_S}. 
%$I=\{t_k\}_{k\in K}$ denote the countable subset of $\R_+$ with  $\kappa(\{t\}\times V)>0$ for $t\in I$. 
%Let $\A=(A_t)_{t\geq 0}$ be given in (3.4). We may and do assume that $\A$ is c\`adl\`ag because process $\M$ and $\X$ are c\`adl\`ag and $\A=\X-X_0-\M$. We want to show that $\A$ is of finite variation. 
%Consider the countable family of random variables
%\begin{equation}
%Z_{l,j}:=\int_{\{t_j\}\times V} \big[\phi(t_l,s,v)-\phi(s,s,v)\big]\,\Lambda(ds,dv),\qquad l,j\in K.
%\end{equation}
%Then $Z_{l,j}$ is $\F_{t_j}^\Lambda$-measurable and $Z_{l,l}=0$ a.s.\  
This completes Step~5. 

\medskip

\emph{Step~6}: We will show that   $\A$  is a predictable \ca\ process of finite variation.   According to Step~5 the process  $\textbf{W}:=\X'-X_0-\M'$ is predictable and has \ca\ paths of finite variation.
Thus with $\mathbf{V}=(V_t)_{t\geq 0}$  given by 
  \begin{equation}
V_t= \sum_{s\in (0,t]} \Delta X_s\1_{\{\abs{X_s}>r\}}-\sum_{s\in (0,t]} \Delta M_s\1_{\{\abs{M_s}>r\}}
  \end{equation}
we have by the definitions of $\bf W$ and $\bf V$   that 
\begin{equation}\label{decomp-A-67}
  A_t= X_t-X_0-M_t=W_t+V_t-\E \tilde M_t.
  \end{equation}
  %  
%  \\ ={}& \Big(X'_t-X_0-M'_t\Big)+\sum_{s\in (0,t]} \Delta X_s\1_{\{\abs{X_s}>r\}}-\sum_{0<s\leq t} \Delta M_s\1_{\{\abs{X_s}>r\}}-a_t\\ \label{decom-r}= {}& 
%  W_t+\sum_{0<s\leq t} \Delta X_s\1_{\{\abs{X_s}>r\}}-\sum_{0<s\leq t} \Delta M_s\1_{\{\abs{X_s}>r\}}-a_t,
%\end{align}
This shows that $\A$ has   \ca\ sample paths of finite variation. Next we will show that $\A$ is predictable. Since the processes $\bf W$, $\bf V$ and $\tilde \M$ depend on the truncation level $r$ they will be denoted  $\mathbf{W}^r$, $\mathbf{V}^{r}$ and $\tilde \M^r$ in the following.  As $r\to \infty$, $V_t^r(\omega)\to 0$ point wise in $(\omega,t)$, which by \eqref{decomp-A-67} shows that $W^r_t(\omega)-\E \tilde M^r_t \to A_t(\omega)$  point wise in $(\omega,t)$ as $r\to \infty$. For all $r\in \N$,    $(W^r_t-\E \tilde M^r_t)_{t\geq 0}$ is a predictable process, which implies  that  $\bf A$ is a point wise limit of predictable processes and hence predictable. This completes the proof of Step~6 and the proof of the decomposition \eqref{dec} in Case~1.
\medskip

%
%Suppose, in addition, that $\X$ is a special semimartingale.
%% and $\Lambda$ has zero mean or is symmetric.  
% By applying \eqref{eq:T2}  we get
%\begin{equation} \label{dom-Y}
%\sup_{0< s \le t} |\Delta X_s| \ge \sup_{0< T_i^1 \le t} |\Delta X_{T_i^1}| = \sup_{i\in \N} |R_i \phi(T_i^1, T_i)| \1_{\{0<T_i^1\le t\}}= \sup_{0< s \le t} |\Delta M_s|.
%\end{equation}
%Since $\X$ is a special semimartingale the process  $(\sup_{s\leq t} \abs{\Delta X_s})_{t\geq 0}$ is locally integrable cf.\ 
%\cite[Ch.~III, Theorem~32]{Protter}, and  by  \eqref{dom-Y}, $(\sup_{s\leq t} \abs{\Delta M_s})_{t\geq 0}$ is locally integrable which shows that  $\M$ is a special semimartinagle.  Due to the  independent increments of $\M$ we have that $\E\abs{M_t}<\infty$ for all $t>0$, cf.\ \cite[Ch.~II, Proposition~2.29(a)]{Jacod_S}. Let $M^0_t=M_t-\E M_t$ and note  that $(\Delta M^0_t)_{t\geq 0}=(\Delta M_t)_{t\geq 0}$ since $t\mapsto \E M_t$ is continuous. Let $N$ be the martingale component of $\X$. By similar  arguments as in \eqref{eq_N_j} we have 
%\begin{equation}\label{}
%  \Delta M^0_{\tau_k}=\Delta M_{\tau_k}=\Delta X_{\tau_k}=\Delta N_{\tau_k}
%\end{equation}
%and  again by arguments as above it follows that $\M^0=\bf N$. On the other hand, if $\E \abs{M_t}<\infty$ for all $t> 0$ then $(M_t-\E M_t)$ is a centered process with independent increment and hence a martingale. Moreover, the function $t\mapsto \E M_t$ is of finite variation and thus $A_t+\E M_t$ is of finite variation.  
\emph{Case 2. $\Lambda$ is symmetric Gaussian}:
Suppose  that $\Lambda$ is a symmetric  Gaussian random measure. By \citet[Theorem~4.6]{Andreas2} used on the sets  $C_t=(-\infty,t]\times V$,  $\X$ is a special semimartingale in $\Fi^{\Lambda}$ with martingale component $\M=(M_t)_{t\geq 0}$ given by 
   \begin{equation}\label{}
 M_t=\int_{(0,t]\times V} \phi(s,(s,v))\,\Lambda(ds,dv),\qquad t\geq 0,
\end{equation}
see \cite[Equation~(4.11)]{Andreas2}, which completes the proof in the   Gaussian case. 

\medskip

\emph{Case 3. $\Lambda$ is general}:
Let us observe that it is enough to show the theorem in the above two cases. We may decompose $\Lambda$ as $\Lambda = \Lambda_G + \Lambda_P$, where $\Lambda_G, \Lambda_P$ are independent, independently scattered random measures. $\Lambda_G$ is a symmetric Gaussian random measure characterized by \eqref{Lambda} with $b \equiv 0$ and $\kappa \equiv 0$ while $\Lambda_P$ is given by \eqref{Lambda} with $\sigma^2 \equiv 0$. Observe that 
\begin{equation} \label{eq-fil}
\Fi^{\Lambda} = \Fi^{\Lambda_G} \vee \Fi^{\Lambda_P},
\end{equation}
which can be deduced  from the L\'evy-It\^o decomposition, see \cite[II, 2.35]{Jacod_S},  used on the processes $\Y=(Y_t)_{t\geq 0}$ of the form $Y_t=\Lambda((0,t]\times B)$ where  $B\in \V_0$ ($\V_0$ is defined on page~\pageref{con-stop}).  We  have $\X = \X^G + \X^P$, where $\X^G$ and $\X^P$ are defined by \eqref{def-X} with $\Lambda_G$ and $\Lambda_P$ in the place of $\Lambda$, respectively. Since $(\Lambda, \X)$ and $(\Lambda_P -\Lambda_G, \X^P- \X^G)$ have the same distributions,
the process $\X^P -\X^G$ has a modification which is a semimartingale with respect to $\Fi^{\Lambda_P-\Lambda_G}= \Fi^{\Lambda_P} \vee \Fi^{-\Lambda_G}= \Fi^{\Lambda}$. 
%(argue similarly as in \cite{Basse_Pedersen}, Lemma~4.8). 
Consequently, processes $\X^G$ and $\X^P$ have modifications which are semimartingales with respect to $\Fi^{\Lambda}$, and so, they are semimartingales relative to $\Fi^{\Lambda_G}$ and $\Fi^{\Lambda_P}$, respectively, and the general result follows from the above two  cases. 

\medskip

\emph{The uniqueness}: Let $\M, \M', \A$ and $\A'$ be as in the theorem. We will first show that $(\M, \M')$ is a bivariate process with independent increments relative to $\Fi^{\Lambda}$. To this aim, choose $0 \le s < t$ and $A_1, \dots, A_n \in \s$ such that $A_i \subset (- \infty, s] \times V$, $i\le n$, $n \ge 1$. Consider random vectors $\xi=(\xi_1,\xi_2):= (M_t -M_s, M_t' - M_s')$ and $\eta = (\eta_1,\dots,\eta_n):=(\Lambda(A_1),\dots, \Lambda(A_n))$. Since $\M$ and $\M'$ are processes representable by $\Lambda$, $(\xi, \eta)$ has an infinitely divisible distribution in $\R^{n+2}$. Since $\M$ and $\M'$ have independent increments relative to $\Fi^{\Lambda}$, $\xi_i$ is independent of $\eta_j$ for every $i\le 2$, $j\le n$. It follows from the form characteristic function and the uniqueness of L\'evy-Khintchine triplets that the pairwise independence between blocks of jointly infinitely divisible random variables is equivalent to the independence of blocks (this is a straightforward extension of \cite[Theorem~4]{Hudson-Trucker}). Therefore, $\xi$ is independent of $\eta$. We infer that $\xi$ is independent of $\F^{\Lambda}_s$, so that $(\M, \M')$ is a process with independent increments relative to $\Fi^{\Lambda}$,
 so is $\overline \M := \M'- \M$.

Since $\X=X_0+\M+\A=X_0+\M'+\A'$ by assumption, we have
\begin{equation} \label{}
 \overline\M = \M' - \M = \A'- \A,
\end{equation}
so that the independent increment semimartingale $\overline\M$ is predictable. For each $n\in \N$ define the truncated process  ${\overline \M}^{(n)} = (\overline M_t^{(n)})_{t\geq 0}$  by 
\begin{equation}
\overline M^{(n)}_t={\overline M}_t-\sum_{s\leq t} \Delta {\overline M}_s\1_{\{|\Delta {\overline M}_s|> n\}}.
\end{equation}
According to \cite[II, 5.10]{Jacod_S}, there exists a c\`adl\`ag deterministic function
${\bf g}_{n}$ of finite variation with $g_{n}(0)=0$ such that ${\overline \M}^{(n)}-{\bf g}_{n}$ is a martingale. Since  ${\overline \M}^{(n)}-{\bf g}_{n}$ is also predictable and of finite variation, ${\overline \M}^{(n)}={\bf g}_{n}$, cf.\ \cite[I, 3.16]{Jacod_S}. Letting $n\to \infty$ we obtain that $\overline\M$ is deterministic and obviously c\`adl\`ag and of finite variation.

\medskip

\emph{The special semimartingale part}:
To prove the  part concerning the special semimartingale property of $\X$ we note that the process $\bf A$ in \eqref{A} is a special semimartingale since it is a predictable c\`adl\`ag process  of finite variation. Thus $\bf X$ is a special semimartingale if and only if $\bf M$ is special semimartingale. Due to the independent increments,  $\bf M$ is a special semimartingale if and only if $\E\abs{M_t}<\infty$ for all $t>0$, cf.\  \cite[II, 2.29(a)]{Jacod_S}, and in that case $M_t=(M_t-\E M_t)+\E M_t$ is the  canonical decomposition of $\bf M$. This completes the proof.
\end{proof}

\bigskip

\begin{proof}[Proof of Theorem~\ref{Hida}]
We only need to prove the  \emph{only if}-implication. Suppose that $\X$ is a semimartingale with respect to $\Fi^\Lambda$. 
According to Remark~\ref{cadlag} we may and do choose  $\phi$ such that $t\mapsto \phi(t,u)$ is c\`adl\`ag of all $u$.  By Remark~\ref{remark-sym}, assumption \eqref{drift} is satisfied and hence by letting $\M$ and $\A$ be defined by \eqref{M} and \eqref{A}, respectively, we obtain the representation of $\X$ as claimed in Theorem~\ref{Hida}. To show the uniqueness part we note that by symmetric, the deterministic function $g$ in Theorem~\ref{thm1}  satisfies that $g(t)$ equals $-g(t)$ in law, which implies that $g(t)=0$ for all $t\geq 0$. Since the expectation of a symmetric random variable is zero whenever it exists the last part regarding the special semimartingale property follows as well. 
\end{proof}

\medskip

\begin{remark}\label{remark-tech}
{ \rm
We conclude this section by recalling that the proof of any of the results on Gaussian semimartingales $\X$ mentioned in the Introduction relies on the approximations of the finite variation component $\A$ by discrete time Doob--Meyer decompositions $\A^n=(A^n_t)_{t\geq 0}$ given by 
\begin{equation}
A^n_t=\sum_{i=1}^{[2^n t]} \E[X_{i2^{-n}}-X_{(i-1)2^{-n}}|\F_{(i-1)2^{-n}}],
\qquad t\geq 0
\end{equation}
and showing that the convergence $\lim_n A^n_t=A_t$ holds in an appropriate sense, see \cite{Stricker_gl, Meyer_app}.
This technique does not  seem  effective in the non-Gaussian situation since it relies on  strong integrability properties of  functionals of $\X$, which are in general not present and can not be obtained by stopping arguments.   
}
\end{remark}

 % %%%%%%%%%%%%
\section{Some stationary increment semimartingales} \label{appl}
%%%%%%%%%%%%%%%%%%%

%%In this section we consider the case when the process $\bf X$ is a stationary increment mixed moving average (SIMMA) process. When $\bf X$ is a semimartingale, the process $\M$,  in the decomposition \eqref{dec},  is a L\'evy process and the finite variation process $\A$ in \eqref{dec} is a SIMMA process. The finite variation property of certain SIMMA processes was characterized in \citet{fv-mma} and we will use it to obtain characterizations of infinitely divisible semimartingales in some cases of interest.
%%
%%A SIMMA process is an infinitely divisible stationary increment mixed moving average process 

In this section we consider infinitely divisible  processes which are stationary increment mixed moving averages (SIMMA).
Specifically, a process $\X=\p X$ is called a SIMMA process if it can be written in the form
\begin{equation} \label{eq:mma}
X_t=\int_{\R \times V} \big[f(t-s, v)- f_0(-s, v)\big]\, \Lambda(ds, dv),\qquad t\geq 0,
\end{equation} 
where the functions $f$ and $f_0$ are deterministic measurable such that  $f(s, v) = f_0(s,v) = 0$ whenever $s<0$, and $f(\cdot,v)$ is \ca\ for all $v$.  $\Lambda$ is an independently scattered infinitely divisible random measure that is invariant under translations over $\R$. If $V$ is a one-point space (or simply, there is no $v$-component in \eqref{eq:mma}) and $f_0=0$, then \eqref{eq:mma} defines a moving average (a mixed moving average for a general $V$, cf. \cite{mixed_ma_R}). If $V$ is a one-point space  
and $f_0(x)=f(x)=x_+^\alpha$ for some $\alpha\in \R$,  then $\X$ is  a fractional L\'evy process.

The finite variation property of SIMMA processes was investigated in \citet{fv-mma} and these results, together with Theorem \ref{thm1}, are crucial in our description of SIMMA semimartingales.

The random measure $\Lambda$ in \eqref{eq:mma} is as in \eqref{Lambda} but the functions $b$ and $\sigma^2$ do not depend on $s$ and the measure $\kappa$ is a product measure: $\kappa(ds,dv)=ds\,m(dv)$ for some $\sigma$-finite measure $m$ on $V$.  In this case, for $A\in \s$ and $\theta\in \R$  
\begin{align}\label{eq:M}
&\log \mathbb{E} e^{i\theta \Lambda(A)} \\
& \quad = \int_A \Big( i\theta b(v)-\frac{1}{2}\theta^2 \sigma^2(v) +\int_{\R} (e^{i\theta x}-1-iu\lt x\rt) \, \rho_v(dx)\Big)\, ds\,m(dv).
\nonumber
\end{align}
The function $B$ in \eqref{B} is independent of $s$, so that with $B(x,v)=B(x,(s,v))$ we have
\begin{equation}\label{B1}
   B(x, v) = xb(v) +  \int_{\R} \big( \lt xy\rt - x \lt y\rt\big) \, \rho_{v}(dy), \quad x \in \R, \ v \in V.
\end{equation}
%Notice that $\Lambda$ satisfies \eqref{Lambda2} since  $\kappa(ds,dv)=ds\,m(dv)$. 

The SIMMA process \eqref{eq:mma} is a special case of \eqref{def-X} if we take $\phi(t,(s,v))=f(t-s, v)- f_0(-s, v)$. Therefore, from Theorem \ref{thm1} we obtain:

\begin{theorem}\label{decomp-special}
 Suppose that  
\begin{equation} \label{drift-4}
\int_{V} \big| B\big(f(0, v), v\big) \big| \, m(dv) < \infty.
\end{equation}
Then  $\mathbf{X}$ is a semimartingale with respect to the filtration 
$\mathbb{F}^{\Lambda}=(\mathcal{F}^{\Lambda}_t)_{t \ge 0}$ if and only if
\begin{equation}\label{dec-levy}
   X_t = X_0 + M_t + A_t, \quad t \ge 0,
\end{equation}
where $\M= \p M$ is a L\'evy process given by  
\begin{equation} \label{M-levy}
M_t = \int_{(0,t] \times V} f(0,v)\,\Lambda(ds,dv), \quad t\ge 0, 
\end{equation}
and $\mathbf{A}= \p A$ is a predictable process of finite variation  given by 
\begin{equation} \label{A-levy}
A_t = \int_{\R \times V} [g(t-s,v) -  g(-s,v)] \,\Lambda(ds,dv)
\end{equation}
where $g(s, v)= f(s, v) - f(0, v) \1_{\{s\geq 0\}}$. 
%
%If   $\X$ is a semimartingale then it is a special semimartingale if and only if   $\E\abs{M_1}<\infty$; in this case  $(M_t-t\E M_1)_{t\geq 0}$ is a  martingale and  
%\begin{equation}\label{}
%  X_t= X_0+ (M_t -t\, \E M_1)+ (A_t+ t\, \E M_1),\quad t\geq 0
%\end{equation}
%  is the canonical decomposition of $\X$. 
\end{theorem}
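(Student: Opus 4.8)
The plan is to obtain Theorem~\ref{decomp-special} as the specialization of Theorem~\ref{thm1} to $\phi(t,(s,v)) = f(t-s,v) - f_0(-s,v)$; the whole argument is a translation of the abstract decomposition into moving-average language, the only real work being to produce convergent stochastic integrals for the two components. First I would check the standing hypotheses of Theorem~\ref{thm1}. For each fixed $(s,v)$ the map $t\mapsto \phi(t,(s,v))$ is \ca\ because $f(\cdot,v)$ is \ca\ and $-f_0(-s,v)$ is constant in $t$. For the drift condition \eqref{drift}, observe that on the range $s>0$ one has $-s<0$, so $f_0(-s,v)=0$ and $\phi(s,(s,v))=f(0,v)$; since for a SIMMA measure $B(x,(s,v))=B(x,v)$ is independent of $s$ and $\kappa(ds,dv)=ds\,m(dv)$, the left-hand side of \eqref{drift} equals $t\int_V |B(f(0,v),v)|\,m(dv)$, which is finite for every $t>0$ exactly under hypothesis \eqref{drift-4}.

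Granting the hypotheses, Theorem~\ref{thm1} yields the equivalence ``semimartingale $\Leftrightarrow$ \eqref{dec}'' with $\M,\A$ given by \eqref{M}, \eqref{A}. The martingale-type part is immediate: $\phi(s,(s,v))=f(0,v)$ for $s>0$ turns \eqref{M} into \eqref{M-levy}. To strengthen ``independent increments'' to ``L\'evy process'' I would invoke the stationarity built into the SIMMA class: the integrand $f(0,v)$ does not involve the time variable $s$ and $\Lambda$ is invariant under translations of $\R$, so $M_t-M_u=\int_{(u,t]\times V}f(0,v)\,\Lambda(ds,dv)$ has a distribution depending only on $t-u$; together with independence of increments and the \ca\ modification provided by Theorem~\ref{thm1}, this makes $\M$ a L\'evy process.

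The finite-variation part is where I expect the main obstacle. A direct computation gives $\phi(t,(s,v))-\phi(s_+,(s,v))=f(t-s,v)-f(0,v)$ for $s\le t$ (the $f_0$-terms cancel, using right-continuity of $f$ at $0$), and this equals $g(t-s,v)$ with $g(s,v)=f(s,v)-f(0,v)\1_{\{s\ge0\}}$; so \eqref{A} reads $A_t=\int_{(-\infty,t]\times V}g(t-s,v)\,\Lambda(ds,dv)$. The difficulty is that this integral need not converge once the integration is spread over all of $\R$, because the moving-average representation requires the $f_0$-centering that has here been differenced away. I would therefore not use \eqref{A} verbatim but instead compute $A_t=X_t-X_0-M_t$ from the representations $X_t=\int_{\R\times V}[f(t-s,v)-f_0(-s,v)]\,\Lambda(ds,dv)$, the analogous formula for $X_0$, and \eqref{M-levy}. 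The $f_0$-centerings cancel and leave
\[
A_t=\int_{\R\times V}\big\{[f(t-s,v)-f(-s,v)]-f(0,v)\1_{(0,t]}(s)\big\}\,\Lambda(ds,dv),
\]
and the elementary identity $g(t-s,v)-g(-s,v)=[f(t-s,v)-f(-s,v)]-f(0,v)\1_{\{0<s\le t\}}$, obtained by expanding $g$ and using $\1_{\{s\le t\}}-\1_{\{s\le 0\}}=\1_{\{0<s\le t\}}$ for $t\ge0$, rewrites this as \eqref{A-levy}. Since \eqref{A-levy} now equals $X_t-X_0-M_t$, it converges and inherits predictability and finite variation from Theorem~\ref{thm1}; the subtraction of the $t$-independent term $g(-s,v)$ is precisely the re-centering that restores $A_0=0$ and turns the formally divergent \eqref{A} into the convergent moving-average integral \eqref{A-levy}.
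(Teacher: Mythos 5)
Your proof is correct and is essentially the paper's own argument: the paper establishes Theorem~\ref{decomp-special} precisely by remarking that the SIMMA process is the special case $\phi(t,(s,v))=f(t-s,v)-f_0(-s,v)$ of \eqref{def-X} and invoking Theorem~\ref{thm1}, and your verification of the \ca\ property, of the equivalence of \eqref{drift} with \eqref{drift-4}, and of the L\'evy property of $\M$ (stationary increments from the translation invariance of $\Lambda$, independence from the independent scattering, plus the \ca\ modification) supplies exactly the details this reduction needs. The one place you diverge is the treatment of $\A$, and the ``main obstacle'' you describe there is an artifact of a misreading of notation: in \eqref{A} the symbol $s_{+}$ denotes the positive part $s\vee 0$ (consistent with the definition $x_+:=\max\{x,0\}$ used in Section~\ref{appl}, and forced by the fact that $\phi(\cdot,u)$ is only defined for time arguments in $\R_+$, so a right-hand limit at $s<0$ would be meaningless). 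With this reading the integrand of \eqref{A} is $\phi(t,(s,v))-\phi(s\vee 0,(s,v))$, which equals $f(t-s,v)-f(0,v)=g(t-s,v)-g(-s,v)$ for $0<s\le t$ and equals $f(t-s,v)-f(-s,v)=g(t-s,v)-g(-s,v)$ for $s\le 0$; hence \eqref{A} is literally \eqref{A-levy}, the $f_0$- and $f(0,v)$-centerings are already built in, and no divergence issue arises. Your workaround --- defining $\A$ as $X_t-X_0-M_t$, which Theorem~\ref{thm1} guarantees to be predictable, \ca\ and of finite variation, and then identifying it with \eqref{A-levy} by linearity of the stochastic integral --- is nevertheless valid (and independent of how one reads $s_+$), so your proof stands as written.
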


Now we will give specific and closely related necessary and sufficient 
conditions on $f$ and $\Lambda$ that make $\X$ a semimartingale.

 \begin{theorem}[Sufficiency]\label{thm-suf} 
Let $\mathbf{X}=\p X$ be specified by \eqref{eq:mma}--\eqref{eq:M}. Suppose that 
\eqref{drift-4} is satisfied 
%$f(0,v) \1_{(0, t]}(s)$ is $\Lambda$-integrable for each $t>0$ 
and that for $m$-a.e.\ $v \in V$,  $f(\cdot,v)$ is absolutely  continuous on $[0,\infty)$ with a derivative 
$\dot  f(s,v)=\frac{\partial }{\partial s}f(s,v)$ satisfying
  \begin{align}
& \int_V \int_{0}^\infty\big(\abs{\dot f(s,v)}^2\sigma^2(v)\big)\,ds
 \,m(dv)<\infty,   \label{int_1} \\
 \label{Cf}
  &  \int_V \int_0^\infty \int_\R \big(\abs{x\dot f(s,v)}\wedge
   \abs{x\dot f(s,v)}^2\big)\,\rho_v(dx)\,ds\,m(dv)<\infty.
\end{align}
Then $\bf X$ is a semimartingale with respect to $\Fi^\Lambda$.  
\end{theorem}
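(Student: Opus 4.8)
The plan is to apply Theorem~\ref{decomp-special}. Since that result is an \emph{if and only if}, to conclude that $\X$ is a semimartingale it suffices to exhibit the decomposition \eqref{dec-levy}--\eqref{A-levy} with $\M$ a L\'evy process and $\A$ a predictable process of finite variation. The heart of the matter is to show that the process $\A$ of \eqref{A-levy} is of finite variation; the component $\M$ and the algebraic identity $\X=X_0+\M+\A$ then come essentially for free. Throughout I use that for a symmetric or general $\Lambda$ the existence of a stochastic integral is governed by \eqref{i1}--\eqref{i2} on page~\pageref{i1}.

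First I would recognize $\A$ as itself a SIMMA process. With $g(s,v)=f(s,v)-f(0,v)\1_{\{s\ge 0\}}$ as in the statement, \eqref{A-levy} reads $A_t=\int_{\R\times V}[g(t-s,v)-g(-s,v)]\,\Lambda(ds,dv)$, which is of the form \eqref{eq:mma} with both $f$ and $f_0$ replaced by $g$. The key observations are that $g(\cdot,v)$ is absolutely continuous on $[0,\infty)$ with $\dot g(s,v)=\dot f(s,v)$, that $g(s,v)=0$ for $s<0$, and crucially that $g(0,v)=0$. I would then invoke the finite-variation criterion for SIMMA processes, \citemma, applied to $\A$: its Gaussian condition is \eqref{int_1} (since $\dot g=\dot f$), its jump condition is \eqref{Cf} (again $\dot g=\dot f$), and its diagonal drift condition reduces to $\int_V|B(g(0,v),v)|\,m(dv)=\int_V|B(0,v)|\,m(dv)=0$ because $B(0,v)=0$. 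Hence all hypotheses of \citemma\ hold and $\A$ is of finite variation. The same fact $g(0,v)=0$ makes $A_t$ measurable with respect to $\F^\Lambda_{t-}$ (the boundary contribution at $s=t$ vanishes), so $\A$ is predictable, matching the process \eqref{A} produced by Theorem~\ref{thm1}.

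Next I would check that $\M$ of \eqref{M-levy} is a well-defined L\'evy process. Because its integrand $f(0,v)$ does not depend on $s$ and $\Lambda$ is translation invariant and independently scattered, the increments of $\M$ over disjoint time intervals are independent and stationary, so $\M$ is a L\'evy process as soon as the integral exists. For the integrand $f(0,v)\1_{\{0<s\le t\}}$, condition \eqref{i1} is exactly \eqref{drift-4}, while \eqref{i2} follows because this integrand equals $\psi_X-\psi_A$ with $\psi_X(s,v)=f(t-s,v)-f(-s,v)$ the integrand of $\X-X_0$ and $\psi_A$ that of $\A$, both $\Lambda$-integrable, and the function $K$ of \eqref{K} satisfies $K(x+x',u)\le 2\bigl(K(x,u)+K(x',u)\bigr)$; integrating this bound gives \eqref{i2}. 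Finally I would verify $X_t=X_0+M_t+A_t$ by matching kernels: using $g(t-s,v)-g(-s,v)=f(t-s,v)-f(-s,v)-f(0,v)\1_{\{0<s\le t\}}$, the $f_0$-terms cancel in $X_t-X_0$ and the diagonal term $\int_{\R\times V}f(0,v)\1_{\{0<s\le t\}}\,\Lambda(ds,dv)$ is precisely $M_t$, yielding \eqref{dec-levy}. With $\M$ a L\'evy process and $\A$ of finite variation, Theorem~\ref{decomp-special} gives that $\X$ is a semimartingale relative to $\Fi^\Lambda$.

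The hard part will be the finite-variation step: correctly importing \citemma\ and checking that passing from $f$ to $g$---which shares the derivative $\dot f$ but has vanishing diagonal value $g(0,v)=0$---is exactly what turns the three hypotheses \eqref{drift-4}, \eqref{int_1}, \eqref{Cf} into the hypotheses of that criterion for $\A$, with the diagonal drift dropping out automatically. A secondary technical point is the well-definedness of $\M$, i.e.\ condition \eqref{i2} for $f(0,\cdot)$, which is not among the stated assumptions and must be inferred from the existence of $\X$ and $\A$ together with the subadditivity of $K$.
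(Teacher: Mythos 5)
You follow essentially the same route as the paper: verify the hypotheses of Theorem~\ref{decomp-special} by exhibiting the L\'evy process $\M$ of \eqref{M-levy} and showing that $\A$ of \eqref{A-levy} has finite variation via the sufficiency criterion of \cite{fv-mma}. But there is a genuine gap: you never establish that the stochastic integral defining $\A$ exists. The $\Lambda$-integrability of the kernel $(s,v)\mapsto g(t-s,v)-g(-s,v)$ is not among the hypotheses \eqref{drift-4}, \eqref{int_1}, \eqref{Cf}, and it cannot be taken for granted: it is needed to apply the finite-variation criterion, which presupposes a well-defined process (this is why the paper first writes ``Then $\A$ is well-defined by \eqref{dec-levy}'' and only afterwards cites that criterion), and it is needed again in your verification of \eqref{i2} for the kernel of $\M$, where you assert that the integrand $\psi_A$ of $\A$ is ``$\Lambda$-integrable'' without proof. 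As written, your argument is circular: the existence of $\A$ is used to obtain the existence of $\M$, while $\A$ itself is never shown to exist.

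The missing step is precisely the technical core of the paper's proof, a Jensen's inequality argument: writing $g(t-s,v)-g(-s,v)=\int_0^t \dot g(u-s,v)\,du$ and using that $\psi\colon u\mapsto 2\int_0^{\abs{u}}(w\wedge 1)\,dw$ is convex and satisfies $\psi(u)\le \abs{ux}\wedge\abs{ux}^2\le 2\psi(u)$, one reduces, via Fubini's theorem and translation invariance of Lebesgue measure, condition \eqref{i2} for this kernel to \eqref{Cf} (and the Gaussian part to \eqref{int_1}). With that in hand the paper proceeds in the opposite order from yours: \eqref{i2} for the kernel of $\M$ follows by linearity (your subadditivity bound for $K$ is the right tool here), \eqref{drift-4} supplies \eqref{i1}, hence $\M$ exists; then $\A:=\X-X_0-\M$ exists by linearity of the stochastic integral, and only at that point is the finite-variation criterion invoked. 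A secondary point: your citation is wrong. \citemma\ is the regular-variation result quoted as Proposition~\ref{remark} (concerning conditions \eqref{eq:u0} and \eqref{eq:u00}); it is not a finite-variation criterion. The result you need is \cite[Theorem~3.1]{fv-mma}.
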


 \begin{proof}
 We need to verify the conditions of Theorem \ref{decomp-special}.
 With $g(s, v)= f(s, v) - f(0, v) \1_{\{s\geq 0\}}$ we have for $m$-a.e.\ $v \in V$, $g(\cdot,v)$ is absolutely continuous on $\R$ with derivative $\dot g(s,v)=\dot f(s,v)$ for $s > 0$ and $\dot g(s,v)=0$ for $s<0$. By  Jensen's inequality,  for each fixed $t>0$, the function
 \begin{equation}
(s,v)\mapsto g(t-s,v)-g(-s,v)=\int_0^t \dot g(u-s,v)\,du,
 \end{equation}
when substituted for $\dot  f(s,v)$ in \eqref{int_1}--\eqref{Cf}, satisfies these conditions. Indeed,  it is  straightforward to verify \eqref{int_1}. To verify  \eqref{Cf} we use the fact that $\psi\! :u\mapsto 2\int_0^{\abs{u}} (v\wedge 1)\,dv$ is convex and satisfies $\psi(u)\leq \abs{ux}\wedge \abs{ux}^2\leq 2\psi(u)$. In particular,  $(s,v)\mapsto g(t-s,v)-g(-s,v)$ satisfies  \eqref{i2} of the Introduction, and so does the function
 \begin{equation}
 (s,v)\mapsto f(0,v) \1_{(0, t]}(s)=g(t-s,v) -  g(-s,v) - [f(t-s,v) -  f(-s,v)].  
 \end{equation}
This fact together with assumption  \eqref{drift-4} guarantee that $\M$ of Theorem~\ref{decomp-special} is well-defined. Then $\A$ is well-defined by \eqref{dec-levy}. The process $\A$ is of finite variation by \cite[Theorem~3.1]{fv-mma} because $g(\cdot,v)$ is absolutely continuous on $\R$ and $\dot g(\cdot, v)= \dot f(\cdot, v)$ satisfies \eqref{int_1}--\eqref{Cf}.
% The process $\M$ of Theorem \ref{decomp-special} exists by our assumption, as does the process $\A$ because
%\begin{equation}\label{}
%g(t-s,v) -  g(-s,v) = [f(t-s,v) -  f(-s,v)] + f(0,v) \1_{(0, t]}(s)
%\end{equation}
%and both terms on the right hand side are $\Lambda$-integrable. Since $g(\cdot,v)$ is absolutely continuous on $\R$ with derivative $\dot g(s, v) = \dot f(s,v)$ when $s>0$ and $\dot g(s,v)=0$ when $s<0$, \cite[Theorem~3.1]{fv-mma} gives the finite variation of $\A$.
  \end{proof}
%
%Suppose that $\Lambda$ is symmetric. Then $\bf X$ is a special semimartingale with an absolutely continuous finite variation component if and only if  for $m$-a.e.\ $v \in V$,  $
% f(\cdot,v)$ is absolutely 
% continuous on $[0,\infty)$ with derivative $\dot
% f(s,v)=\frac{\partial }{\partial s}f(s,v)$ satisfying
% \noeqref{suf-eq-1}\noeqref{suf-eq-2}
%\begin{align}
%& \int_V \int_{0}^\infty\big(\abs{\dot f(s,v)}^2\sigma^2(v)\big)\,ds
% \,m(dv)<\infty,   \label{int_1} \\
%\label{suf-eq-1}
% & \int_V \int_0^\infty \int_\R \big(\abs{x\dot f(s,v)}^2\wedge
% 1\big)\,\rho_v(dx)\,ds\,m(dv)<\infty,\\  
%\label{suf-eq-2} & \int_V
% \int_0^\infty \int_\R \1_{\{\abs{x}\int_s^{1+s} \abs{\dot
%     f(t,v)}\,dt\geq 1\}}\,\rho_v(dx)\,ds\,m(dv) <\infty,\\
% \label{suf-eq-3}
% &  \int_V \int_0^\infty    \int_{1/\abs{\dot f(s,v)}}^{1/\int_{s}^{1+s}
%   \abs{\dot f(u,v)}\,du}\abs{\dot f(s,v)x}
% \,\rho_v(dx)\,ds\,m(dv)<\infty,
% \end{align}
%where the inner integral in \eqref{suf-eq-3} is set to zero, if its
%lower limit of integration exceeds the upper limit.

 \begin{theorem}[Necessity]\label{thm-nes}
 Suppose   that $\X$ is a semimartingale with respect to $\Fi^\Lambda$ and 
 for $m$-almost every $v\in V$ we have either 
   \begin{equation}\label{invar-con}
\int_{-1}^1 \abs{x}\,\rho_{v}(dx)=\infty\quad \text{or}\quad 
  \sigma^2(v)>0.
 \end{equation}
% \begin{equation}\label{invar-con}
%  m\Big(v\in V:\int_{-1}^1 \abs{x}\,\rho_{v}(dx)<\infty,\,
%  \sigma^2(v)=0\Big)=0. 
%\end{equation}
 Then for $m$-a.e.\ $v$,     $ f(\cdot,v)$ is absolutely continuous on
 $[0,\infty)$ with a derivative $\dot f(\cdot,v)$ satisfying
 \eqref{int_1} and  
  \begin{align}
  \label{trunc_case} & \int_0^\infty\int_\R \big(\abs{x \dot
    f(s,v)}\wedge \abs{x \dot f(s,v)}^2\big)(1 \wedge
  x^{-2})\,\rho_v(dx)\, ds<\infty.   
  \end{align} 
  If, additionally,
\begin{equation}\label{eq:u0}
\limsup_{u \to \infty} \, \frac{u\int_{\abs{x}>u}
  \abs{x}\,\rho_v(dx)}{\int_{|x|\le u} x^2 \, \rho_v(dx)} < \infty
\qquad m\text{-a.e.} 
\end{equation}
then for $m$-a.e.\ $v$,
\begin{equation}\label{fdot_int}
 \int_{0}^\infty \int_{\R} ( |x{\dot f}(s,v)|^2 \wedge |x{\dot
   f}(s,v)|) \, \rho_v(dx)\, ds < \infty. 
 \end{equation}
Finally, if 
 \begin{equation}\label{eq:u00}
\sup_{v\in V}\sup_{u > 0} \, \frac{u\int_{\abs{x}>u}
  \abs{x}\,\rho_v(dx)}{\int_{|x|\le u} x^2 \, \rho_v(dx)} <\infty 
\end{equation} 
then  $\dot f$ satisfies \eqref{int_1}--\eqref{Cf}. 
  \end{theorem}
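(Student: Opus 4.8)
The plan is to obtain the asserted regularity of $f$ from the finite-variation component $\A$ in the semimartingale decomposition of $\X$, and then to feed this into the finite-variation characterization of SIMMA processes from \cite{fv-mma}. The first difficulty is that Theorem~\ref{decomp-special} carries the drift hypothesis \eqref{drift-4}, which is \emph{not} assumed here. I would remove it by symmetrization: take an independent copy $\X'$ of $\X$, built from an independent copy $\Lambda'$ of $\Lambda$, and set $\tilde\X=\X-\X'$, a symmetric SIMMA driven by $\tilde\Lambda=\Lambda-\Lambda'$ with the \emph{same} kernels $f,f_0$. Independent enlargement keeps $\X$ and $\X'$ semimartingales relative to $\Fi^\Lambda\vee\Fi^{\Lambda'}$, so $\tilde\X$ is one as well; since $\tilde\X$ is adapted to the smaller filtration $\Fi^{\tilde\Lambda}\subseteq\Fi^\Lambda\vee\Fi^{\Lambda'}$, Stricker's theorem on subfiltrations makes $\tilde\X$ an $\Fi^{\tilde\Lambda}$-semimartingale. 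Because $\tilde\Lambda$ is symmetric, Remark~\ref{remark-sym} gives $B\equiv0$, so \eqref{drift-4} holds automatically and Theorem~\ref{decomp-special} applies to $\tilde\X$. Its Lévy measures are $\tilde\rho_v=\rho_v+\check\rho_v$ (with $\check\rho_v(\cdot)=\rho_v(-\cdot)$) and its Gaussian variance is $2\sigma^2$; since every integral condition in the statement depends on $\rho_v$ only through functionals even in $x$, each of \eqref{invar-con}, \eqref{int_1}, \eqref{trunc_case}, \eqref{fdot_int}, \eqref{Cf}, \eqref{eq:u0}, \eqref{eq:u00} holds for $\tilde\rho_v$ if and only if it holds for $\rho_v$, up to the harmless factor $2$. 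Hence it suffices to prove the theorem for $\tilde\X$, and from now on I assume $\Lambda$ symmetric.

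Theorem~\ref{decomp-special} then gives the decomposition $X_t=X_0+M_t+A_t$ in which $\A$ is the SIMMA process \eqref{A-levy} with kernel $g(s,v)=f(s,v)-f(0,v)\1_{\{s\ge0\}}$, and, crucially, $\A$ is of \emph{finite variation}. I would apply the necessity half of the finite-variation characterization \cite[Theorem~3.1]{fv-mma} to $\A$. Condition \eqref{invar-con} is exactly what makes this characterization informative: it guarantees that for $m$-a.e.\ $v$ the driving noise carries either a Gaussian component or small jumps of infinite variation, so that finite variation of $\A$ genuinely constrains the kernel rather than being a mere artifact of jump summability. Under \eqref{invar-con} the characterization yields that $g(\cdot,v)$ is absolutely continuous for $m$-a.e.\ $v$ with derivative satisfying the natural finite-variation bounds \eqref{int_1} and \eqref{trunc_case}. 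Since $g(\cdot,v)$ is absolutely continuous on $\R$, vanishes on $(-\infty,0)$ and has $g(0,v)=0$, this is equivalent to $f(\cdot,v)$ being absolutely continuous on $[0,\infty)$ with $\dot f(s,v)=\dot g(s,v)$ for $s>0$, which is the first assertion.

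It remains to upgrade \eqref{trunc_case} under the tail-balance hypotheses. The only gap between \eqref{trunc_case} and \eqref{fdot_int} is the damping factor $1\wedge x^{-2}$, which suppresses the large-jump ($|x|>1$) contribution; to remove it I would compare the tail $\int_{|x|>u}|x|\,\rho_v(dx)$ with the truncated second moment $\int_{|x|\le u}x^2\,\rho_v(dx)$ at the thresholds $u\sim|\dot f(s,v)|^{-1}$ where $|x\dot f(s,v)|$ crosses $1$. Condition \eqref{eq:u0} bounds this ratio for large $u$, which controls precisely the regions where $|\dot f(s,v)|$ is small — the regions responsible for the $x^{-2}$ damping — and thereby converts the variance-type bound already contained in \eqref{trunc_case} into the full integrability \eqref{fdot_int} for $m$-a.e.\ $v$. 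Upgrading further to \eqref{Cf} requires the uniform bound \eqref{eq:u00}, which lets one perform the same trade with constants independent of $v$ and then integrate the pointwise estimate against $m(dv)$; the Gaussian condition \eqref{int_1} is $v$-integrated from the outset and is retained throughout. I expect this analytic passage from \eqref{trunc_case} to \eqref{fdot_int} and \eqref{Cf} — a careful estimate relating truncated first and second moments of $\rho_v$ across the threshold $|x|=1/|\dot f|$, followed by Fubini in $(s,x)$ — to be the main technical obstacle, the symmetrization and the invocation of \cite{fv-mma} being comparatively routine.
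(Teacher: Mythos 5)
Your first half is exactly the paper's own argument: symmetrize with an independent copy $\Lambda'$ (independent enlargement preserves the semimartingale property, and Stricker's filtration-shrinkage theorem brings it down to $\Fi^{\Lambda-\Lambda'}$), observe that symmetry forces $B\equiv 0$ so that \eqref{drift-4} holds, apply Theorem~\ref{decomp-special} to conclude that $\A$ is of finite variation, feed this into the necessity part of the finite-variation characterization of \cite{fv-mma}, and transfer absolute continuity from $g(\cdot,v)$ to $f(\cdot,v)$ via $f(s,v)=g(s,v)+f(0,v)\1_{\{s\geq 0\}}$. Your check that all hypotheses and conclusions are even functionals of $x$, hence pass between $\rho_v$ and the symmetrization $\rho_v+\check\rho_v$ (where $\check\rho_v(B)=\rho_v(-B)$), is a detail the paper leaves implicit. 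One small correction: the necessity statement in \cite{fv-mma} is Theorem~3.3 there; Theorem~3.1 is the sufficiency result invoked in Theorem~\ref{thm-suf}.

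The genuine gap is your final step. You plan to prove \eqref{fdot_int} and \eqref{Cf} as purely analytic consequences of \eqref{trunc_case} combined with the tail-ratio conditions \eqref{eq:u0}, \eqref{eq:u00}. No such implication exists; the step is false, not merely unfinished. Take $V$ a one-point space, $\sigma^2=0$, $\rho(dx)=\abs{x}^{-\alpha-1}\,dx$ with $\alpha\in(1,2)$, and $\dot f(s)=\min(1,s^{-1/\alpha})$. Then \eqref{invar-con} holds, and \eqref{eq:u0}, \eqref{eq:u00} hold because the ratio appearing there equals $(2-\alpha)/(\alpha-1)$ for every $u>0$ by scaling. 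For $0<a\le 1$,
\[
\int_\R \big(\abs{xa}\wedge \abs{xa}^2\big)\big(1\wedge x^{-2}\big)\,\rho(dx)\le C a^2,
\]
because on $1<\abs{x}\le 1/a$ the damping turns the integrand into the constant $a^2$, so that region contributes at most $a^2\rho(\abs{x}>1)=(2/\alpha)a^2$, whereas by \eqref{cal-stable} the \emph{undamped} integral equals $C'a^{\alpha}$. Since $\int_0^\infty \dot f(s)^2\,ds<\infty$ but $\int_0^\infty \dot f(s)^{\alpha}\,ds=\infty$, condition \eqref{trunc_case} holds while \eqref{fdot_int} and \eqref{Cf} fail. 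The moral is that on the region $1<\abs{x}\le 1/\abs{\dot f}$ the damping replaces the truncated second moment $\int x^2\,\rho(dx)$ by the total mass $\rho(dx)$, and no hypothesis on the tail-to-truncated-second-moment ratio can undo that loss. Consequently the conclusions under \eqref{eq:u0} and \eqref{eq:u00} must be extracted from the finite-variation property of $\A$ itself, which carries strictly more information than \eqref{trunc_case}; this is precisely what \cite[Theorem~3.3]{fv-mma} provides, and why the paper quotes that theorem for \emph{all} of the conclusions --- absolute continuity, \eqref{int_1}, \eqref{trunc_case}, and, under \eqref{eq:u0} resp.\ \eqref{eq:u00}, the bounds \eqref{fdot_int} resp.\ \eqref{Cf} --- rather than deriving the last two from the first three.
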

  
   \begin{proof}
 Assume that $\X$ is a semimartingale with respect to $\Fi^{\Lambda}$. 
 By a symmetrization argument we may assume that $\Lambda$ is a symmetric random measure. Indeed,  let $\Lambda'$ be an independent copy of $\Lambda$ and $\X'$ be defined by \eqref{eq:mma} with $\Lambda$ replaced by $\Lambda'$. Then $\X'$ is a semimartingale with respect to  $\Fi^{\Lambda'}$. By the independence, both $\X$ and $\X'$ are semimartingales with respect to  $\Fi^\Lambda\vee \Fi^{\Lambda'}$ and since $\Fi^{\Lambda-\Lambda'}\subseteq \Fi^\Lambda\vee \Fi^{\Lambda'}$, the process $\X-\X'$ is a semimartingale with respect to  $\Fi^{\Lambda-\Lambda'}$. This shows that we may assume that $\Lambda$ is symmetric. Then  \eqref{drift-4} holds since $B=0$.   
 
 By Theorem~\ref{decomp-special}  process $\bf A$ in \eqref{A-levy} is of finite variation. 
It follows from \cite[Theorem~3.3]{fv-mma}  that for $m$-a.e.\ $v$, $g(\cdot,v)$ is absolutely continuous on $\R$ with a derivative $\dot g(\cdot,v)$ satisfying \eqref{int_1} and \eqref{trunc_case}.  Furthermore $\dot g$ satisfies \eqref{fdot_int} under  assumption \eqref{eq:u0},  and under assumption \eqref{eq:u00}, $\dot g$ satisfies   \eqref{Cf}. Since $f(s,v)=g(s,v)+f(0,v)\1_{\{s\geq 0\}}$, $f(\cdot,v)$ is absolutely continuous on $[0,\infty)$ with a derivative $\dot f(\cdot,v)=\dot g(\cdot,v)$ for $m$-a.e.\ $v$  satisfying the conditions of the theorem.  
 \end{proof}

  \begin{remark}\label{cor-iff}
Theorem~\ref{thm-nes} becomes an exact converse to Theorem~\ref{thm-suf} when \eqref{invar-con} holds and either  \eqref{eq:u0} holds and $V$ is a finite set, or \eqref{eq:u00} holds. 
\end{remark}

  \begin{remark}\label{remark_1}
Condition~\eqref{invar-con} is in general necessary to deduce that $f$
has absolutely continuous sections. Indeed, let $V$ be a one point
space so that $\Lambda$ is generated by increments of a L\'evy process
denoted again by $\Lambda$. If \eqref{invar-con} is not satisfied,
then taking  $f=\1_{[0,1]}$ we get that $X_t=\Lambda_t-\Lambda_{t-1}$
is of finite variation and hence a semimartingale, but $f$ is not
continuous on $[0,\infty)$. 
\end{remark}

\smallskip

Next we will consider several consequences of Theorems~\ref{thm-suf} and \ref{thm-nes}. When there is no $v$-component, \eqref{drift-4} is always satisfied and $\Lambda$ is generated by a two-sided  L\'evy process.
In what follows, $\mathbf{Z}=(Z_t)_{t\in \R}$ will denote a non-deterministic two-sided  L\'evy process, with characteristic triplet $(b,\sigma^2,\rho)$,  $Z_0=0$ and natural filtration $\Fi^Z$.
\smallskip	

The following proposition characterizes fractional L\'evy processes which are semimartingales, and  completes results of \cite[Corollary~5.4]{Basse_Pedersen} 
and parts of \cite[Theorem~1]{Be_Li_Sc}.

\begin{proposition}[Fractional L\'evy processes]\label{fLp}
 Let $\gamma >0$,   $x_+:=\max\{x,0\}$ for $x\in \R$, $\mathbf{Z}$ be a L\'evy process as above, and   $\bf X$ be a fractional L\'evy process defined by  \begin{equation}\label{def-frac}
X_t=\int_{-\infty}^t \big\{(t-s)^\gamma_+-(-s)_+^\gamma\,\big\}\,dZ_s
\end{equation}
where   the stochastic integrals exist. 
Then  $\bf X$ is a semimartingale with respect to $\Fi^Z$ if and only if  $\sigma^2=0$, $\gamma\in (0,\tfrac{1}{2})$ and 
\begin{equation}\label{int-finite-786}
\int_\R \abs{x}^{\frac{1}{1-\gamma}}\,\rho(dx)<\infty. 
\end{equation}
\end{proposition}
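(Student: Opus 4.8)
The plan is to apply Theorem~\ref{thm-nes} and Theorem~\ref{thm-suf} to the SIMMA representation of $\X$, where the relevant kernel is $f(s,v)=f_0(s,v)=s_+^\gamma$ (with $V$ a one-point space and $m$ a point mass, so the $v$-dependence drops out). First I would observe that $f(s)=s_+^\gamma$ is absolutely continuous on $(0,\infty)$ with derivative $\dot f(s)=\gamma s^{\gamma-1}$, and that $f(0)=0$, so the drift condition \eqref{drift-4} reduces to a condition on $B(0,v)=0$ and is automatically satisfied. Since there is no $v$-component, $\Lambda$ is generated by the two-sided \levy process $\mathbf{Z}$, so $\Fi^\Lambda=\Fi^Z$, which matches the filtration in the statement.

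The core of the argument is to translate the abstract integrability conditions into the explicit scaling condition \eqref{int-finite-786}. The key computation is to evaluate
\begin{equation}
\int_0^\infty \int_\R \big(|x\dot f(s)|\wedge |x\dot f(s)|^2\big)\,\rho(dx)\,ds
\end{equation}
with $\dot f(s)=\gamma s^{\gamma-1}$. By Fubini and substituting $s=(|x|t)^{1/(1-\gamma)}$ (or directly computing the $s$-integral of $|x|s^{\gamma-1}\wedge |x|^2 s^{2(\gamma-1)}$ over $(0,\infty)$), the inner integral in $s$ scales like a constant times $|x|^{1/(1-\gamma)}$, provided $\gamma\in(0,\tfrac12)$ so that the integral converges at both endpoints; here the constraint $\gamma<\tfrac12$ is exactly what guarantees integrability near $s=\infty$ (where $s^{2(\gamma-1)}$ must be integrable, i.e.\ $2(\gamma-1)<-1$) and near $s=0$ (where $s^{\gamma-1}$ must be integrable, i.e.\ $\gamma-1>-1$). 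This identifies \eqref{Cf} with \eqref{int-finite-786}, and simultaneously the Gaussian condition \eqref{int_1} becomes $\sigma^2\int_0^\infty s^{2(\gamma-1)}\,ds$, which is finite only when $\sigma^2=0$ (the integral diverges at infinity for every $\gamma<\tfrac12$, and diverges at $0$ for $\gamma\le\tfrac12$), forcing $\sigma^2=0$.

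For the \emph{sufficiency} direction, once $\sigma^2=0$, $\gamma\in(0,\tfrac12)$ and \eqref{int-finite-786} hold, the conditions \eqref{int_1} and \eqref{Cf} of Theorem~\ref{thm-suf} are verified by the scaling computation above, so $\X$ is a semimartingale. For \emph{necessity}, I would invoke Theorem~\ref{thm-nes}: assuming $\X$ is a semimartingale, I must first check that the non-degeneracy hypothesis \eqref{invar-con} holds so the theorem applies. Since $\mathbf{Z}$ is non-deterministic, either $\sigma^2>0$ or $\rho\neq 0$; if $\int_{-1}^1|x|\rho(dx)<\infty$ and $\sigma^2=0$ one would be in the finite-variation regime and must argue separately, but a cleaner route is to use the sharpened condition \eqref{eq:u00} together with Remark~\ref{cor-iff}, which gives an exact converse when $V$ is a point. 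The main obstacle I anticipate is the careful endpoint analysis of the scaling integral: verifying that $\gamma<\tfrac12$ is both necessary and sufficient for convergence, and correctly handling the dichotomy in Theorem~\ref{thm-nes} between \eqref{trunc_case}, \eqref{fdot_int} and \eqref{Cf} so that the resulting moment condition on $\rho$ is exactly \eqref{int-finite-786} rather than a strictly weaker or stronger variant; in particular one must confirm that the regularity/balance hypothesis \eqref{eq:u00} can be dispensed with in the point-mass case by exploiting the specific power-law form of $\dot f$, so that \eqref{trunc_case} alone already yields the sharp exponent $1/(1-\gamma)$.
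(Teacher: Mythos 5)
Your sufficiency argument and the scaling identity $\int_0^\infty \bigl(|x\dot f(s)|\wedge|x\dot f(s)|^2\bigr)\,ds = C\,|x|^{1/(1-\gamma)}$ for $\dot f(s)=\gamma s^{\gamma-1}$, $\gamma\in(0,\tfrac12)$, are correct and coincide with the paper's. The gap is in the necessity direction, where you name the obstacle but neither of your proposed routes around it works. Invoking \eqref{eq:u00} together with Remark~\ref{cor-iff} is not available: \eqref{eq:u00} is an additional hypothesis on $\rho$ which Proposition~\ref{fLp} does not assume, and it fails for perfectly admissible L\'evy measures (e.g.\ purely atomic ones with rapidly decreasing atoms), so that route proves only a special case. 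Your fallback, that \eqref{trunc_case} ``alone already yields the sharp exponent,'' is false for the large jumps: by the scaling identity, \eqref{trunc_case} reads $\int_\R |x|^{1/(1-\gamma)}(1\wedge x^{-2})\,\rho(dx)<\infty$, and since $\gamma<\tfrac12$ gives $1/(1-\gamma)<2$, the integrand on $\{|x|>1\}$ is $|x|^{1/(1-\gamma)-2}$ with a \emph{negative} exponent; that part is finite for every L\'evy measure and carries no information. So \eqref{trunc_case} delivers exactly the small-jump half of \eqref{int-finite-786} (where the weight equals $1$) and nothing about the tail.

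The missing idea, which is how the paper closes the argument, is that the tail condition $\int_{|x|>1}|x|^{1/(1-\gamma)}\rho(dx)<\infty$ and the constraint $\gamma<\tfrac12$ are not consequences of the semimartingale property at all: they follow from the mere existence of the stochastic integral \eqref{def-frac}. By the Rajput--Rosi\'nski integrability criterion, well-definedness at $t=1$ gives $\int_{-\infty}^0\int_\R \bigl(1\wedge |\{(1-s)^\gamma-(-s)^\gamma\}x|^2\bigr)\,\rho(dx)\,ds<\infty$, and the elementary bound $(1-s)^\gamma-(-s)^\gamma\ge \gamma(1-s)^{\gamma-1}$ for $s\le 0$ converts this into $\int_{|\gamma x|>1}\bigl(|\gamma x|^{1/(1-\gamma)}-1\bigr)\rho(dx)<\infty$, and also forces $\gamma<\tfrac12$ whenever $\rho\neq0$ or $\sigma^2>0$. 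With that in hand the necessity splits cleanly: if \eqref{invar-con} holds, Theorem~\ref{thm-nes} applies, \eqref{int_1} rules out $\sigma^2>0$ (note your parenthetical has the endpoints swapped: for $\gamma<\tfrac12$ the integral $\int_0^\infty s^{2(\gamma-1)}ds$ diverges at $0$, not at $\infty$), and \eqref{trunc_case} supplies the small-jump half; if \eqref{invar-con} fails, i.e.\ $\sigma^2=0$ and $\int_{|x|\le1}|x|\,\rho(dx)<\infty$, the small-jump half is automatic because $1/(1-\gamma)>1$, so there is nothing left to prove. Your ``argue separately'' case is thus trivial, but without the well-definedness step your argument cannot produce the large-jump condition in either case.
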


\begin{proof}
First we notice that, as a consequence  of  $\X$ being   well-defined, $\gamma<\tfrac{1}{2}$ and 
\begin{equation}\label{equ-34}
\int_{\abs{x}>1} \abs{x}^{\frac{1}{1-\gamma}}\,\rho(dx)<\infty.
\end{equation}
Indeed, since the stochastic integral \eqref{def-frac} is well-defined,  \cite[Theorem~2.7]{Rosinski_spec} shows that 
\begin{equation}\label{Ros-Raj-23}
\int_{-\infty}^t \int_\R \big(1\wedge \abs{\{(t-s)^\gamma-(-s)^\gamma_+\}x}^2\big)\,\rho(dx)\,ds<\infty, \qquad t\geq 0.
\end{equation}
This implies that  $\gamma<\tfrac{1}{2}$ if $\rho(\R)>0$. A similar argument shows that $\gamma<\tfrac{1}{2}$ if $\sigma^2>0$, and thus, by the non-deterministic assumption on $\bf Z$,  we have shown  that $\gamma<\frac{1}{2}$.  Putting $t=1$ in~\eqref{Ros-Raj-23} and using  the estimate $\abs{(1-s)^\gamma-(-s)^\gamma_+}\geq \abs{\gamma (1-s)^{\gamma-1}}$ for $s\in (-\infty,0]$  we get
\begin{align}
\infty>{}&\int_{-\infty}^0 \int_\R \big(1\wedge \abs{\gamma(1-s)^{\gamma-1}x}^2\big)\,\rho(dx)\,ds\\
= {}& \int_\R \int_1^\infty \big(1\wedge \abs{\gamma s^{\gamma-1}x}^2\big)\,ds\,\rho(dx)
\\ \geq {}& \int_\R \int_{1\leq s\leq \abs{\gamma x}^{\frac{1}{1-\gamma}} }\,ds\,\rho(dx)\geq  \int_{\abs{\gamma x}>1} \big(\abs{\gamma x}^{\frac{1}{1-\gamma}} -1\big)\,\rho(dx),
\end{align}
which shows \eqref{equ-34}.

Suppose that $\bf X$ is a semimartingale. 
If  $\sigma^2>0$, then according to   Theorem~\ref{thm-nes},   $f$ is absolutely continuous on $[0,\infty)$ with a derivative $\dot f$ satisfying
\begin{align}
\int_0^\infty \abs{\dot f(t)}^2\,dt=\int_0^\infty \gamma^2 t^{2(\gamma-1)}\,dt<\infty\end{align}
which  is a contradiction and shows that $\sigma^2=0$. 
By the  non-deterministic assumption on  $\bf Z$ we have $\rho(\R)>0$.  To complete the proof of the necessity part, it remains to show that
\begin{equation}\label{equ-34a}
\int_{\abs{x}\le 1} \abs{x}^{\frac{1}{1-\gamma}}\,\rho(dx)<\infty.
\end{equation}
Since $\dot f(t)=\gamma t^{\gamma-1}$ for $t>0$, we have
\begin{equation}\label{basic-cal}
\int_0^\infty \big\{\abs{x \dot f(t)}\wedge \abs{x \dot f(t)}^2\big\}\,dt=C \abs{x}^{\frac{1}{1-\gamma}} 
\end{equation}
where $C=\gamma^{\frac{1}{1-\gamma}}(\gamma^{-1}+(1-2\gamma)^{-1})$. 
In the case  $\int_{\abs{x}\leq 1} \abs{x}\,\rho(dx)<\infty$ \eqref{equ-34a} holds    since $1<\tfrac{1}{1-\gamma}$. Thus we may assume that $\int_{\abs{x}\leq 1}\abs{x}\,\rho(dx)=\infty$, that is, \eqref{invar-con} of Theorem~\ref{thm-nes} is satisfied.  By 
Theorem~\ref{thm-nes}~\eqref{trunc_case} and \eqref{basic-cal} we have 
\begin{equation}
\int_{\abs{x}\leq 1} \abs{x}^{\frac{1}{1-\gamma}} \,\rho(dx)\leq \int_\R \abs{x}^{\frac{1}{1-\gamma}} (1\wedge x^{-2})\,\rho(dx)<\infty
\end{equation}
which completes the proof of the necessity part.   

On the other hand, suppose that $\sigma^2=0$, $\gamma\in (0,\tfrac{1}{2})$ and \eqref{int-finite-786} is satisfied. By \eqref{int-finite-786} and \eqref{basic-cal},  $f$ is absolutely continuous on $[0,\infty)$ with a derivative $\dot f$ satisfying \eqref{Cf} and hence $\bf X$ is a semimartingale with respect to $\Fi^Z$, cf.\ Theorem~\ref{thm-suf}. 
\end{proof}

%It is not easy to find a centered random measure $\Lambda$ failing  \eqref{eq:u0}. 

\medskip

Below we will recall the conditions from  \cite{fv-mma} under which \eqref{eq:u0} or \eqref{eq:u00} hold.
Recall that a measure $\mu$ on $\R$ is said to be regularly varying if $x\mapsto \mu([-x,x]^c)$  is a regularly varying function; see \cite{Bingham}.  

\begin{proposition}[\citemma]\label{remark}  
Condition \eqref{eq:u0} is satisfied when one of the following two conditions holds for $m$-almost every $v \in V$
\begin{enumerate}[(i)]
	\item  \label{pro-con-1}  $\int_{\abs{x}>1} x^2 \, \rho_v(dx)<\infty$ or
\item  \label{pro-con-2} $\rho_v$ is regularly varying at $\infty$ with
          index $\beta\in [-2,-1)$. 
\end{enumerate}
Suppose that $\rho_v=\rho$ for all $v$, where $\rho$ satisfies \eqref{eq:u0} and is regularly varying with index $\bar \beta\in (-2,-1)$ at 0. Then  \eqref{eq:u00} holds.  
\end{proposition}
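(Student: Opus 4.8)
The plan is to treat the two assertions separately, since both conclusions are pointwise statements about a single \levy measure: \eqref{eq:u0} is quantified $m$-a.e.\ in $v$, and in \eqref{eq:u00} the measure $\rho_v\equiv\rho$ does not depend on $v$, so the supremum over $v$ is redundant. Throughout I would write
\begin{equation}
N_v(u)=u\int_{|x|>u}|x|\,\rho_v(dx),\qquad D_v(u)=\int_{|x|\le u}x^2\,\rho_v(dx),\qquad g_v=N_v/D_v,
\end{equation}
and assume $\rho_v\ne0$ (otherwise the ratio is vacuous). Under hypothesis \eqref{pro-con-1} the argument is elementary: on $\{|x|>u\}$ one has $u|x|\le x^2$, so $N_v(u)\le\int_{|x|>u}x^2\,\rho_v(dx)\to0$ as $u\to\infty$ because $\int_{|x|>1}x^2\,\rho_v(dx)<\infty$, while $D_v(u)\uparrow\int_\R x^2\,\rho_v(dx)\in(0,\infty)$ (finite by the \levy property near $0$ together with \eqref{pro-con-1}). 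Hence $g_v(u)\to0$ and \eqref{eq:u0} holds trivially.

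Under hypothesis \eqref{pro-con-2}, where the tail $T_v(u):=\rho_v([-u,u]^c)$ is regularly varying at $\infty$ with index $\beta\in[-2,-1)$, I would read off the asymptotics of $N_v$ and $D_v$ from Karamata's theorem applied to the radial measure. Integration by parts gives $\int_{|x|>u}|x|\,\rho_v(dx)\sim\frac{|\beta|}{|\beta|-1}\,u\,T_v(u)$, hence $N_v(u)\sim\frac{|\beta|}{|\beta|-1}u^2T_v(u)$. For $\beta\in(-2,-1)$ the function $x\,T_v(x)$ is regularly varying of index $\beta+1>-1$, so Karamata yields $D_v(u)\sim\frac{|\beta|}{\beta+2}u^2T_v(u)$ and therefore $g_v(u)\to(\beta+2)/(|\beta|-1)<\infty$. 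The boundary case $\beta=-2$ I would dispatch separately: there $x\,T_v(x)$ has index $-1$, $D_v$ is slowly varying and dominates $u^2T_v(u)$, so $g_v(u)\to0$. In all cases $\limsup_{u\to\infty}g_v(u)<\infty$, which is \eqref{eq:u0}.

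For the second assertion, with $\rho_v\equiv\rho$ fixed it remains only to prove $\sup_{u>0}g(u)<\infty$, where $g=N/D$. I would split $(0,\infty)$ into three regions. Near $\infty$, $\limsup_{u\to\infty}g(u)<\infty$ is precisely the assumed \eqref{eq:u0}. On a compact $[a,b]\subset(0,\infty)$, monotonicity gives $N(u)\le b\int_{|x|>a}|x|\,\rho(dx)<\infty$ and $D(u)\ge D(a)=\int_{|x|\le a}x^2\,\rho(dx)>0$ (the latter strictly positive because regular variation at $0$ with negative index forces infinite mass near the origin), so $g$ is bounded there.

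The main content—and the main obstacle—is the behavior as $u\to0^+$, where the regular variation of $\rho$ at $0$ with index $\bar\beta\in(-2,-1)$ enters. Here $T(u)=\rho([-u,u]^c)\to\infty$, and since $1+\bar\beta<0<2+\bar\beta$ one has $\int_{|x|\le1}|x|\,\rho(dx)=\infty$ but $\int_{|x|\le1}x^2\,\rho(dx)<\infty$. Applying Karamata's theorem at the origin I expect $\int_{u<|x|\le1}|x|\,\rho(dx)\sim\frac{|\bar\beta|}{|\bar\beta|-1}u\,T(u)$; combining this with the fact that \eqref{eq:u0} already forces $\int_{|x|>1}|x|\,\rho(dx)<\infty$ (so the outer part is a lower-order constant) gives $N(u)\sim\frac{|\bar\beta|}{|\bar\beta|-1}u^2T(u)$, and analogously $D(u)\sim\frac{|\bar\beta|}{\bar\beta+2}u^2T(u)$, whence $g(u)\to(\bar\beta+2)/(|\bar\beta|-1)<\infty$ as $u\to0^+$. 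Together the three regions give $\sup_{u>0}g(u)<\infty$, i.e.\ \eqref{eq:u00}. The delicate points to get right are the preliminary observation that \eqref{eq:u0} guarantees $\int_{|x|>u}|x|\,\rho(dx)<\infty$ for \emph{every} $u>0$ (so that $N$ is everywhere finite), and the careful Karamata computation of the truncated second moment at $0$, where the boundary term $-u^2T(u)$ must cancel correctly against the integral $2\int_0^u rT(r)\,dr$ to leave a positive leading constant.
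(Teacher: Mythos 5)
Your proposal is correct, but it is worth noting that the paper contains no proof of this proposition at all: it is imported wholesale, as the bracketed citation indicates, from \cite[Proposition~3.5]{fv-mma}, so your self-contained argument is necessarily a different route. What you do is reconstruct the result from scratch: writing both the truncated first moment and the truncated second moment in terms of the tail function $T_v(u)=\rho_v([-u,u]^c)$ via Fubini (so that $u\int_{|x|>u}|x|\,\rho_v(dx)=u^2T_v(u)+u\int_u^\infty T_v(r)\,dr$ and $\int_{|x|\le u}x^2\,\rho_v(dx)=2\int_0^u rT_v(r)\,dr-u^2T_v(u)$), and then invoking Karamata's theorem at $\infty$ for \eqref{pro-con-2} and at $0$ for the second assertion. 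Your constants are right: under \eqref{pro-con-2} with $\beta\in(-2,-1)$ the ratio tends to $(\beta+2)/(|\beta|-1)$, the boundary case $\beta=-2$ correctly gives limit $0$ since the denominator's integral then dominates $u^2T_v(u)$, and case \eqref{pro-con-1} is the trivial bound $u|x|\le x^2$ on $\{|x|>u\}$. For \eqref{eq:u00} your three-region splitting is sound, and you correctly isolate the two points where it could break: that \eqref{eq:u0} forces $\int_{|x|>u}|x|\,\rho(dx)<\infty$ for every $u>0$ (finiteness propagates down from large $u$ because a L\'evy measure has finite mass outside any neighborhood of the origin), and that regular variation at $0$ with index $\bar\beta<-1$ forces infinite mass near the origin, hence strict positivity of the denominator on all of $(0,\infty)$. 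The trade-off between the two approaches is the obvious one: the paper's citation keeps Section~4 short and defers the regular-variation bookkeeping to the companion paper, whereas your proof makes the result verifiable in place, exhibits the explicit limiting constants $(\beta+2)/(|\beta|-1)$ and $(\bar\beta+2)/(|\bar\beta|-1)$ at $\infty$ and at $0$, and makes transparent that nothing beyond Karamata's theorem and elementary tail integration is involved.
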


Theorems~\ref{thm-suf} and \ref{thm-nes} and Proposition~\ref{remark} extend  
\citet[Theorem~6.5]{Knight} from the  case where  
$\mathbf Z$ is a Brownian motion to quite general L\'evy processes in the following way. 

\begin{corollary}\label{cor-levy}
Suppose that   $\mathbf{Z}=(Z_t)_{t\in \R}$ is a two-sided  L\'evy process as above, with paths of  infinite variation on compact intervals. Let $\mathbf{X}=(X_t)_{t\geq 0}$ be a process of the form 
\begin{equation}\label{MA-case}
X_t=\int_{-\infty}^t \big\{f(t-s)-f_0(-s)\big\}\,dZ_s.
\end{equation}
Suppose that the random variable $Z_1$ is either square-integrable or has a regularly varying distribution at $\infty$ of index $\beta\in [-2,-1)$. 
Then $\bf X$ is a semimartingale with respect to $\Fi^Z$  if and only if $f$ is absolutely continuous on $[0,\infty)$ with a derivative $\dot f$ satisfying 
\begin{align}
& \int_0^\infty \abs{\dot f(t)}^2\,dt<\infty \qquad \text{if }\sigma^2>0, \\
& \label{levy-int-cor}\int_0^\infty \int_\R \big(\abs{x \dot f(t)}\wedge \abs{x \dot f(t)}^2\big)\,\rho(dx)\,dt<\infty.
\end{align}
\end{corollary}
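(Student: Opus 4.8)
The plan is to recognize \eqref{MA-case} as the $v$-free instance of the SIMMA family \eqref{eq:mma}, obtained by taking $V$ a one-point space and $\phi(t,(s,v))=f(t-s)-f_0(-s)$, and then to read off the corollary from Theorems~\ref{thm-suf} and \ref{thm-nes}. In this one-point setting the measure $m$ reduces to a finite constant, condition \eqref{drift-4} is automatic (as noted just before the corollary), and the term $\int f_0(-s)\,dZ_s$ contributes only an $\F^Z_0$-measurable constant, which is why no hypothesis on $f_0$ is needed. Specializing \eqref{int_1} and \eqref{Cf} to the one-point space turns them into exactly the two displayed conditions on $\dot f$: the integral $\int_0^\infty|\dot f(t)|^2\,dt<\infty$ (which is a genuine constraint only when $\sigma^2>0$) and \eqref{levy-int-cor}.

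The two structural hypotheses of the corollary are precisely what is needed to feed the abstract theorems. First, I would observe that a L\'evy process has sample paths of finite variation on compacts exactly when $\sigma^2=0$ and $\int_{-1}^1|x|\,\rho(dx)<\infty$; hence the assumed infinite variation is equivalent to $\sigma^2>0$ or $\int_{-1}^1|x|\,\rho(dx)=\infty$, which is condition \eqref{invar-con}. Second, the integrability/tail hypothesis on $Z_1$ yields \eqref{eq:u0} through Proposition~\ref{remark}: square-integrability of $Z_1$ is the same as $\int_{|x|>1}x^2\,\rho(dx)<\infty$, i.e.\ case~\eqref{pro-con-1}; and regular variation of the law of $Z_1$ at $\infty$ with index $\beta\in[-2,-1)$ is equivalent to regular variation of $\rho$ at $\infty$ with the same index (the standard tail-equivalence for infinitely divisible laws), i.e.\ case~\eqref{pro-con-2}. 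In either case \eqref{eq:u0} holds.

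With these reductions in hand the proof splits into the two implications. For sufficiency, the stated conditions on $\dot f$ are just \eqref{int_1}--\eqref{Cf} in the one-point space, so Theorem~\ref{thm-suf} applies directly and $\mathbf X$ is a semimartingale. For necessity, since $\mathbf X$ is a semimartingale and \eqref{invar-con} holds, Theorem~\ref{thm-nes} gives that $f$ is absolutely continuous on $[0,\infty)$ with $\dot f$ satisfying \eqref{int_1} and \eqref{trunc_case}; adjoining \eqref{eq:u0} upgrades this to \eqref{fdot_int}, which for a one-point $V$ is literally \eqref{Cf} (this is the $V$-finite instance of Remark~\ref{cor-iff}). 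Translating \eqref{int_1} and \eqref{fdot_int} back produces the two displayed conditions, completing the equivalence.

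The only genuinely non-formal point is the tail-equivalence linking regular variation of the distribution of $Z_1$ to regular variation of the L\'evy measure $\rho$; everything else is a matter of specializing the general theorems and matching integrals. Accordingly, I expect the main thing to get right to be the bookkeeping of this specialization, in particular the verification that \eqref{fdot_int} and \eqref{Cf} coincide when $V$ is a single point, so that \eqref{eq:u0} alone (rather than the stronger \eqref{eq:u00}) suffices to close the converse.
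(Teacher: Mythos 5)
Your proposal is correct and follows essentially the same route as the paper's proof: both reduce the corollary to Theorems~\ref{thm-suf} and \ref{thm-nes} for a one-point $V$, identify the hypothesis on $Z_1$ with conditions \eqref{pro-con-1}/\eqref{pro-con-2} of Proposition~\ref{remark} (square-integrability via moment--L\'evy-measure equivalence, regular variation via the tail equivalence for infinitely divisible laws), and identify infinite variation of $\mathbf{Z}$ with \eqref{invar-con}. Your write-up merely makes explicit the bookkeeping the paper leaves implicit, in particular that \eqref{fdot_int} coincides with \eqref{Cf} when $V$ is a single point, so \eqref{eq:u0} suffices for the converse.
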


\begin{proof}[Proof Corollary~\ref{cor-levy}]
The conditions imposed  on $Z_1$ are equivalent to that $\rho$ satisfies \eqref{pro-con-1} or \eqref{pro-con-2} of Proposition~\ref{remark}, respectively, cf.\  \cite[Theorem~1]{reg-var} and \cite[Theorem~25.3]{Sato}. Moreover, \eqref{invar-con} of Theorem~\ref{thm-nes} is equivalent to that $\bf Z$ has sample paths of infinite variation on bounded intervals and hence the result follows by Theorems~\ref{thm-suf} and \ref{thm-nes}. 
\end{proof}

%\begin{remark}\label{remark-cor}
%Corollary~\ref{cor-levy} also holds if  the finite second moment condition $\E[Z_1^2]<\infty$ is replaced by  the condition that the distribution of  $Z_1$ is regularly varying with index $\beta\in [-2,-1)$,  or equivalently  that $\rho$ is regularly varying with index $\beta\in [-2,-1)$.
%\end{remark}

\begin{example}\label{ex-stable}
 In the following we will consider  $\bf X$ and $\bf Z$  given as in Corollary~\ref{cor-levy} where  $\bf Z$ is either a stable or a tempered stable L\'evy process. 

\textbf{(i) Stable:}
Assume that  $\bf Z$ is a symmetric $\alpha$-stable L\'evy process with index $\alpha\in (1,2)$, that is, $\rho(dx)=c\abs{x}^{-\alpha-1}\,dx$ where $c>0$, and $\sigma^2=b=0$.  
Then $\bf X$ is a semimartingale with respect to $\Fi^Z$ if and only if $f$ is absolutely continuous on $[0,\infty)$ with a derivative $\dot f$ satisfying 
\begin{equation}\label{ex-stable-eq}
\int_0^\infty \abs{\dot f(t)}^\alpha\,dt<\infty.
\end{equation}
 We use  Corollary~\ref{cor-levy} to show the above. Note that  $\int_{\abs{x}\leq 1} \abs{x}\,\rho(dx)=\infty$ and $\rho$ is regularly varying at $\infty$ of index $-\alpha\in (-2,-1)$. Moreover, the identity
\begin{equation}\label{cal-stable}
\int_\R \big(\abs{xy}\wedge \abs{xy}^2\big)\,\rho(dx)=C \abs{y}^\alpha,\qquad y\in \R, 
\end{equation}
 with $C=2c ((2-\alpha)^{-1}+(\alpha-1)^{-1})$, shows that \eqref{levy-int-cor} is equivalent to \eqref{ex-stable-eq}. Thus the result follows by Corollary~\ref{cor-levy}.
 
 \textbf{(ii) Tempered stable:} Suppose  that $\bf Z$ is a symmetric tempered stable L\'evy process with indexs $\alpha\in [1,2)$ and $\lambda>0$, i.e., $\rho(dx)=c \abs{x}^{-\alpha-1}e^{-\lambda \abs{x}} \,dx$ where $c>0$,  and $\sigma^2=b=0$.  
Then $\bf X$ is a semimartingale with respect to $\Fi^Z$ if and only if $f$ is absolutely continuous on $[0,\infty)$ with a derivative $\dot f$ satisfying 
\begin{equation}\label{eq-temp-stable}
\int_0^\infty\big(\abs{ \dot f(t)}^{\alpha}\wedge \abs{ \dot f(t)}^2\big) \, ds <\infty.
\end{equation}
Again we will use Corollary~\ref{cor-levy}.  The conditions imposed on $\bf Z$ in Corollary \ref{cor-levy} are satisfied due to the fact that   $\int_{\abs{x}\leq 1} \abs{x}\,\rho(dx)=\infty$ and $\int_{\abs{x}>1}\abs{x}^2\,\rho(dx)<\infty$.  Moreover,  using the asymptotics of the incomplete gamma functions we have that 
\begin{equation}\label{tem-ex-levy23}
\int_\R \big(\abs{xu}\wedge \abs{xu}^2\big)\,\rho(dx)\sim \begin{cases} C_1 u^\alpha & 
\text{as } u\to \infty\\ C_2 u^2 & \text{as } u\to 0
\end{cases}
\end{equation}
where $C_1, C_2>0$ are finite constants depending only on $\alpha, c$ and $\lambda$, and we write $f(u)\sim g(u)$ as $u\to \infty$ (resp.\ $u\to 0$) when $f(u)/g(u)\to 1$ as $u\to \infty$ (resp.\ $u\to 0$).  Eq.~\eqref{tem-ex-levy23} shows that  \eqref{levy-int-cor} is equivalent to \eqref{eq-temp-stable}, and hence the result follows by Corollary~\ref{cor-levy}.   
\end{example}

\medskip
\begin{example}
A supOU process  ${\bf X}=(X_t)_{t\geq 0}$ is a stochastic process of  the  form 
\begin{equation}\label{eq-supOU}
X_t=\int_{\R_-\times (-\infty,t]} e^{v(t-s)}\,\Lambda(ds,dv)
\end{equation}
where $\R_-:=(-\infty,0)$, $\rho_v=\rho$ does not depend on $v$ and $m$ is a probability  measure. SupOU processes, which is short for  superposition of Ornstein--Uhlenbeck  processes, were   introduced by \citet{supOU}.  Suppose for simplicity that $\sigma^2=0$.  Process   $\bf X$ is well-defined if and only if  $\int_\R \log(1+|x|)\,\rho(dx)<\infty$ and $\int_{-\infty}^0 \frac{1}{|v|}\,m(dv)<\infty$, cf.\ \cite[page~343]{extremes-supOU}. 

Let $\bf X$ be a supOU process of the form \eqref{eq-supOU} and suppose that the L\'evy measure $\rho$ satisfies following (1)--(2):\begin{itemize}
\item[(1)] Either $\int_{|x|\geq 1} |x|^2\,\rho(dx)<\infty$, or $\rho$ is regularly varying at $\infty$ with index $\beta\in [-2,-1)$.
\item[(2)]  $\rho$ is regularly varying at $0$ with index $\bar\beta\in (-2,-1)$.
\end{itemize}
Then $\bf X$ is a semimartingale relative $\mathbbm F^\Lambda$ if and only if 
\begin{equation}\label{eq-yt}
\int_{-\infty}^0 \Big(\int_\R \big(|x v|^2\wedge |x v|\big) \,\rho(dx)\Big) |v|^{-1}\,m(dv)<\infty. 
\end{equation}
In particular if $\Lambda$ is symmetric $\alpha$-stable with $\alpha\in (1,2)$, i.e.\ $\rho(dx)=c |x|^{-1-\alpha}\,dx$, $c>0$.  Then $\bf X$ is a semimartingale with respect to $\mathbbm F^\Lambda$ if and only if 
\begin{equation}\label{stable-eq}
\int_{-\infty}^0 |v|^{\alpha-1}\, m(dv)<\infty. 
\end{equation}

To see this we observe that $f(t,v):=e^{vt}$ is absolutely continuous in $t\in [0,\infty)$ with $\dot f(t,v)=v e^{vt}$. For all $v\in \R_-$ and $x\in \R$ a simple computation shows  that 
\begin{equation}
\int_0^\infty |x \dot f(t,v)|\wedge |x\dot f(t,v)|^2\,dt
= \frac{|x v|^2}{2|v|}\1_{\{|xv|\leq 1\}} +\frac{|xv|-1/2}{|v|}\1_{\{|xv|>1\}}
\end{equation}
which is bounded from below and above by constants times 
\begin{equation}
\frac{1}{|v|} \Big(|x v|^2\wedge |x v|\Big).
\end{equation}
Thus \eqref{eq-yt} follows by Theorems~4.2 and 4.3 together with  Proposition~\ref{remark}. When $\Lambda$ is symmetric $\alpha$-stable with $\alpha\in (1,2)$,  the above (1) and (2) are satisfied and 
 $ \int_\R \big(|x v|^2\wedge |x v| \big)\,\rho(dx)=|v|^{\alpha}$. Hence \eqref{stable-eq} follows by \eqref{eq-yt}.
\end{example}

\medskip
\begin{example}[Multi-stable] \label{cor-stable}
In this example we  extend Example~\ref{ex-stable}(i) to the so called multi-stable processes, that is,  we will consider   $\bf X$  given by \eqref{eq:mma} with  
\begin{equation}
\rho_v(dx)=c \abs{x}^{-\alpha(v)-1} \, dx
\end{equation}
 where $\morf{\alpha}{V}{(0,2)}$ is a measurable function, $c>0$ and $b=\sigma^2=0$. For $v\in V$,  $\rho_v$ is the L\'evy measure of a symmetric stable distribution with index $\alpha(v)$.  Assume that 
 there exists an $r>1$ such that $\alpha(v)\geq r$ for all $v\in V$. Then  $\X$ is a semimartingale with respect to $\Fi^\Lambda$  if and only if  for $m$-a.e.\ $v$, $f(\cdot,v)$ is absolutely continuous on $[0,\infty)$ with  a derivative $\dot f(\cdot,v)$ satisfying
 \begin{equation}\label{stable-ex}
 \int_V\int_0^\infty\Big( \frac{1}{2-\alpha(v)}\abs{\dot f(s,v)}^{\alpha(v)}\Big) \,ds\,m(dv)<\infty.
 \end{equation}

To show the above  we will argue similarly as in Example~\ref{ex-stable}. By the symmetry, \eqref{drift-4} is satisfied. For all $v\in V$,  $\int_{\abs{x}\leq 1} \abs{x}\,\rho_v(dx)=\infty$, which shows that  \eqref{invar-con} of Theorem~\ref{thm-nes} is satisfied.  
By  basic calculus we have for $v\in V$ that 
\begin{equation}\label{int-stable-24}
u\int_{\abs{x}>u} \abs{x}\,\rho_v(dx)=K(v) \int_{\abs{x}\leq u} x^2\,\rho_v(dx)\end{equation}
where $K(v)=(2-\alpha(v))/(\alpha(v)-1)$. Since $\alpha(v)\geq r$ we have that 
$K(v)\leq 2/(r-1)<\infty$ which together with \eqref{int-stable-24} implies \eqref{eq:u00}. 
From  \eqref{cal-stable} we infer that \eqref{Cf} is equivalent to \eqref{stable-ex}, and thus Theorems~\ref{thm-suf} and \ref{thm-nes}  conclude the proof. 
   \end{example}
    \medskip

\begin{example}[supFLP]\label{cor:frac_levy}

 Consider  $\X=\p X$ of the form
\begin{equation}\label{def_multi_frac}
 X_t=\int_{\R\times V} \big( (t-s)_+^{\gamma(v)}-(-s)_+^{\gamma(v)}\big) \,\Lambda(ds,dv),
\end{equation}
where $\morf{\gamma}{V}{(0,\infty)}$ is a measurable function.
 Processes of the form \eqref{def_multi_frac} may be viewed as superpositions of fractional L\'evy processes with (possible) different indexes;  hence the name  supFLP.    If $m$-a.e.\ we have   $\gamma\in (0,\frac{1}{2})$, $\sigma^2 =0$ and 
  \begin{equation}\label{suf_con_frac}
 \int_V \Big(\int_\R \abs{x}^{\frac{1}{1-\gamma(v)}} \,\rho_v(dx)\Big) \big(\tfrac{1}{2}-\gamma(v)\big)^{-1}\,m(dv)<\infty,
\end{equation}
then  $\X$ is a semimartingale with respect to $\Fi^{\Lambda}$.
Conversely, if $\X$ is a semimartingale with respect to $\Fi^{\Lambda}$ and $\int_{\abs{x}\leq 1}\abs{x}\,\rho_v(dx)=\infty$ for $m$-a.e.\ $v$, then  $m$-a.e.\  $\gamma\in (0,\frac{1}{2})$, $ \sigma^2=0$ and 
\begin{equation}\label{eq:nece_ex}
 \int_\R \abs{x}^{\frac{1}{1-\gamma(v)}}\,\rho_v(dx)<\infty,
  \end{equation}
 and if in addition $\rho$ satisfies \eqref{eq:u00},  then \eqref{suf_con_frac} holds.     
%Using  Remark~\ref{remark_inf_var}(II) and Theorems~\ref{thm}--\ref{necessary_sm}, Example~\ref{cor:frac_levy} follows similar as  Example~3.10 in \cite{fv-mma}. In fact, 
  
  To show the above  let $f(t,v)=t^{\gamma(v)}_+$ for $t\in\R, v\in V$.
  Since   $f(0,v)=0$ for all $v$, \eqref{drift-4} is satisfied. As in Example~\ref{fLp}, we observe that the conditions
 \begin{equation}\label{eq-mult-32}
\int_{\abs{x}\geq 1} \abs{x}^{\frac{1}{1-\gamma(v)}}\,\rho_v(dx)<\infty\quad  \text{and} \quad \gamma(v)<\tfrac{1}{2}\quad  m\text{-a.e.}
 \end{equation}
 follow from the fact  that  $\X$ is a well-defined.  For $\gamma(v)\in (0,\frac{1}{2})$, $f(\cdot,v)$ is absolutely continuous on $[0,\infty)$. By \eqref{basic-cal}  we deduce that 
    \begin{align}\label{est-frac-1}
 \frac{c\abs{x}^{\frac{1}{1-\gamma(v)}}}{\tfrac{1}{2}-\gamma(v)}\leq \int_0^\infty \{\abs{x \dot f(t,v)}\wedge \abs{x \dot f(t,v)}^2\}\,dt \leq   
 \frac{\tilde c\abs{x}^{\frac{1}{1-\gamma(v)}}}{\tfrac{1}{2}-\gamma(v)}
   \end{align}
   for all $x\in \R$, 
where $c, \tilde c >0$ are finite constants not depending $v$ and $x$.

 By  Theorem~\ref{thm-suf} and \eqref{est-frac-1}, the sufficient part follows. To show the necessary part assume that $\X$ is a semimartingale with respect to $\Fi^{\Lambda}$ and that $\int_{\abs{x}\leq 1}\abs{x}\,\rho_v(dx)=\infty$ for $m$-a.e.\ $v$. 
  By Theorem~\ref{thm-nes},  $f(\cdot,v)$ is absolutely continuous with a derivative $\dot f(\cdot,v)$ satisfying \eqref{int_1} and \eqref{trunc_case}. From \eqref{int_1} we deduce that $\sigma^2=0$ $m$-a.e.\ and  from \eqref{trunc_case} and \eqref{est-frac-1} we infer that  
 \begin{equation}\label{eq-984}
\int_{\abs{x}\leq 1} \abs{x}^{\frac{1}{1-\gamma(v)}}\,\rho_v(dx)<\infty\quad m\text{-a.e.\ }v. 
 \end{equation}
    By   \eqref{eq-mult-32}--\eqref{eq-984}, condition \eqref{eq:nece_ex} follows. Moreover, if  $\rho$ satisfies \eqref{eq:u00},  then  Theorem~\ref{thm-nes} together with  \eqref{est-frac-1} show \eqref{suf_con_frac}. This completes the proof. 
 \end{example}

\appendix

\section{Appendix} \label{app}

%%%%%%%%%%%
%\subsection{Two examples} \label{app-1}
%%%%%%%%%%%%%%

In this appendix we will treat several of the results stated in the Introduction and Section~\ref{s-sem}. We have stated Stricker's theorem in a slightly extended version where it is combined with \cite[II, \textsection 4d]{Jacod_S}. In the following we treat   Examples~\ref{ex-1} and \ref{ex-2} in detail, discuss Theorem~\ref{thm-HC}, and prove some facts about the representation \eqref{def-X}. 

\medskip

\noindent
\textbf{Example~\ref{ex-1} (continued).}
Recall that $X_t=V+U$ for $t\in [0,1)$ and $X_t=V$ for $t\geq 1$ where $V$ is a Laplace distributed  random variable, that is, has a density $p_V(v)=(1/2)e^{-|v|}$, $v\in \R$, and $U$ is a standard Gaussian random variable independent of $V$. 
Process $\X$ is a special semimartingale with respect to $\mathbbm{F}^X=(\F^X_t)_{t\geq 0}$ with canonical decomposition $X_t=X_0+A_t+M_t$, where $A_t=0$ for $t<1$ and 
\begin{equation}
A_t= \E[\Delta X_1\,|\,\F_{1-}^X]  =-\E[U\,|\,U+V], \qquad  t\geq 1. 
\end{equation} 
Recall $\F^X_{1-}=\sigma(\cup_{s\in [0,1)} \F^X_s)$. 
The below Lemma~\ref{lem-not-id} shows that   $A_1$ is \emph{not} infinitely divisible. 

On the other hand, we may represent $\X$ by a random measure as 
\begin{equation}
X_t=\int_{(-\infty,t]\times V} \phi(t,u)\,\Lambda(du)
\end{equation}
where $V=\{1,2\}$ and $\Lambda$ is the random measure on $\R\times V$ such that for all $A\in \B(\R)$ and $B\subseteq \{1,2\}$ 
\begin{equation}
\Lambda\big(A\times B\big)=\delta_{0}(A)(\delta_{1}(B)V+\delta_{2}(B) U).
\end{equation}
Process $\X$ is  a special semimartingale with respect to $\Fi^\Lambda$ with canonical decomposition $X_t=X_0+A_t+M_t$ where $A_t=X_t-X_0$ and $M_t=0$. In particular, the processes $\M$ and $\A$ in the canonical decomposition of $\X$ in the filtration $\Fi^\Lambda$ are infinitely divisible.

In the following we will give a direct proof for that $\X$ is  not strictly representable. Notice that $\F^X_t=\sigma(U+V)$ for $t < 1$ and $\F^X_t=\sigma(U,V)$ for $t \ge 1$. Suppose to the contrary that
\begin{equation}\label{}
X_t=\int_{(-\infty,t]\times V} \phi(t,u)\,\Lambda(du)
\end{equation}
and $\F^X_t=\F^{\Lambda}_t$, for every $t\geq 0$.  We have
\begin{align*} 
  U+V &= X_0 = \int_{(-\infty,0] \times V} \phi(0,u)\,\Lambda(du) = J_0  \quad  \text{and} 
  \\
  V &= X_1 = \int_{(-\infty,0]\times V} \phi(1,u)\,\Lambda(du) + \int_{(0,1] \times V} \phi(1,u)\,\Lambda(du) = J_1 + J_2, 
\end{align*}
where $J_2$ is independent of $\{J_0, J_1\}$.
Since $V$ does not have Gaussian component, so do  $J_1$ and $J_2$. Now $U=J_0-J_1-J_2$ is Gaussian, so that $J_2$ cannot have Poissonian component either. Thus $J_2$ is deterministic, implying that $V$ is $\F_0^{\Lambda}= \F_0^{X}$-measurable. Hence $V=f(V+U)$ a.s.\ for some Borel function $\morf{f}{\R}{\R}$. Conditioning on $V$ we infer that, except of a set of Lebesgue measure zero, $f$ equals to a constant, a contradiction.

\begin{lemma}\label{lem-not-id}
Let $U$ and $V$ be given as in Example~\ref{ex-1}.  Then $\E[U\,|\,U+V]$ is a bounded random variable and  therefore not infinitely divisible.  
\end{lemma}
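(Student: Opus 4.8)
The plan is to compute $g(s) := \E[U \mid U+V = s]$ only as far as needed to bound it, by means of a Stein-type integration by parts, and then to invoke the elementary fact that a non-degenerate random variable with bounded support cannot be infinitely divisible. Write $\phi(u) = (2\pi)^{-1/2} e^{-u^2/2}$ for the density of $U$ and $p_V(v) = \tfrac12 e^{-\abs{v}}$ for that of $V$, and set $S = U+V$. By independence the density of $S$ is $f_S(s) = \int_\R \phi(u)\,p_V(s-u)\,du$, so that
\begin{equation}
g(s) = \frac{\int_\R u\,\phi(u)\,p_V(s-u)\,du}{\int_\R \phi(u)\,p_V(s-u)\,du}.
\end{equation}

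The key step is to rewrite the numerator. Using $u\phi(u) = -\phi'(u)$ and integrating by parts in $u$ (legitimate since $u \mapsto p_V(s-u)$ is Lipschitz, hence absolutely continuous, and $\phi$ decays rapidly, so the boundary terms vanish), and then using $p_V'(v) = -\operatorname{sgn}(v)\,p_V(v)$ for $v \ne 0$, I obtain
\begin{equation}
\int_\R u\,\phi(u)\,p_V(s-u)\,du = -\int_\R \phi(u)\,p_V'(s-u)\,du = \int_\R \phi(u)\,\operatorname{sgn}(s-u)\,p_V(s-u)\,du.
\end{equation}
Since $\{U = u,\, S = s\}$ forces $V = s-u$, the resulting ratio is exactly $\E[\operatorname{sgn}(V) \mid S = s]$. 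Hence
\begin{equation}
\E[U \mid U+V] = \E[\operatorname{sgn}(V) \mid U+V] \qquad \text{a.s.},
\end{equation}
and therefore $\abs{\E[U \mid U+V]} \le 1$ a.s.; in particular $\E[U \mid U+V]$ is bounded, taking values in $[-1,1]$.

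It remains to rule out infinite divisibility. First, $W := \E[U \mid U+V]$ is not a.s.\ constant: since $\E[U] = \E[S] = 0$ one has $\operatorname{Cov}(U, S) = \E[S\,g(S)]$, which would vanish if $g$ were constant, contradicting $\operatorname{Cov}(U, U+V) = \var(U) = 1$. Second, if such a bounded $W$ were infinitely divisible, then for each $n$ we could write $W \dist Y_1 + \dots + Y_n$ with $Y_i$ i.i.d.; taking essential infima and suprema of independent summands gives $\operatorname{essinf} W = n\operatorname{essinf} Y_1$ and $\operatorname{esssup} W = n\operatorname{esssup} Y_1$, so $Y_1$ has range in an interval of length $(\operatorname{esssup} W - \operatorname{essinf} W)/n \le 2/n$. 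Popoviciu's inequality then yields $\var Y_1 \le (2/n)^2/4 = 1/n^2$, whence $\var W = n\,\var Y_1 \le 1/n \to 0$, forcing $W$ to be degenerate and contradicting the previous step.

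The main obstacle is \emph{discovering and justifying} the identity $\E[U \mid U+V] = \E[\operatorname{sgn}(V) \mid U+V]$: a priori it is not at all evident that $\E[U\mid U+V]$ should be bounded, and the entire payoff comes from the integration-by-parts reduction, whose only delicate point is the corner of the Laplace density at $0$ (handled by absolute continuity of $u\mapsto p_V(s-u)$). Everything following the identity is routine.
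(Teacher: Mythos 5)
Your proof is correct, but it takes a genuinely different route from the paper. The paper establishes boundedness by brute force: it computes the conditional expectation $\E[U\,|\,U+V=y]$ in closed form as a ratio involving the Gaussian distribution function $\Phi$, reads off local boundedness and the limits $\pm 1$ as $y\to\pm\infty$, and then cites Sato's Corollary~24.4 for the fact that a bounded non-deterministic random variable cannot be infinitely divisible. Your Stein-type integration by parts replaces the entire closed-form computation with the identity $\E[U\,|\,U+V]=\E[\mathrm{sgn}(V)\,|\,U+V]$, which gives the sharp bound $\abs{\E[U\,|\,U+V]}\le 1$ in one stroke (and is consistent with the paper's limits $\pm1$, which show this bound is asymptotically attained); the only delicate point, the corner of the Laplace density at $0$, is correctly handled via absolute continuity of $u\mapsto p_V(s-u)$. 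The trade-off is that the paper's explicit formula yields non-degeneracy for free (the limits $\pm 1$ force $g$ to be non-constant), whereas you need a separate argument, which your covariance computation $\operatorname{Cov}(U,U+V)=\var(U)=1\neq 0$ supplies correctly. Finally, your esssup-additivity-plus-Popoviciu argument is a correct, self-contained replacement for the citation of Sato (the essential suprema of independent summands do add, and boundedness of $W$ forces boundedness of each component), so your version is more elementary and self-contained, at the cost of being slightly longer than the paper's appeal to a standard reference.
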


\begin{proof}
Set  $Y=U+V$. In addition, set $\Phi(y)=(2\pi)^{-1/2}\int_{-\infty}^y e^{-x^2/2}\,dx$,   $c_1=(2\sqrt{2\pi})^{-1}$ and  $c_2=\sqrt{e}/2$. By a calculation we get 
\begin{align}
& \E[U\,|\,Y=y]=\int_{\R} u\, \frac{p_{U,Y}(u,y)}{p_Y(y)}\,du\\  & \quad = \frac{-c_1e^{-y^2/2}+c_2e^{-y} \Phi(y-1)+c_1e^{-y^2/2}-c_2e^{y}\big[1-\Phi(y+1)\big]}{ c_2 e^{-y} \Phi(y-1) +c_2 e^{y} \big[1-\Phi(y+1)]}
\end{align}
from which we deduce that $y\mapsto \E[U\,|\,Y=y]$ is locally bounded and 
\begin{equation}\label{eq-36}
\lim_{y\to \pm \infty} \E[U\,|\,Y=y]= \pm 1. 
\end{equation}
Hence  $y\mapsto \E[U\,|\,Y=y]$ is  bounded and    $\E[U\,|\,Y]$  is a bounded random variable. Since  $\E[U\,|\,Y]$ is non-deterministic we conclude that it is not infinitely divisible, see \cite[Corollary~24.4]{Sato}.
\end{proof}

\medskip
\noindent
\textbf{Example~\ref{ex-2} (continued):}
Set $B_t=B_1(t)$.  
For all  $0\le s_1< \dots< s_n=s <t$ and $u_1,\dots,u_n \in \R$,
\begin{align} \label{eq-cha-12}
 & \E[(X_t-X_s) e^{i \sum_{j=1}^n u_j X_{s_j}}] = \frac{1}{i} \frac{\partial}{\partial \theta} \E [e^{i \theta (X_t -X_s) + i \sum_{j=1}^n u_j X_{s_j}}]\Big|_{\theta=0} \\
 &\qquad  = \frac{1}{i} \frac{\partial}{\partial \theta} \exp \Big(\E[e^{i \theta (B_t -B_s) + i \sum_{j=1}^n u_j B_{s_j}}]-1\Big)\Big|_{\theta=0}  \\ \label{eq-cha-13}
 &\qquad  = \exp \Big(\E[e^{ i \sum_{j=1}^n u_j B_{s_j}}]-1\Big)
 \E[(B_t-B_s) e^{i \sum_{j=1}^n u_j B_{s_j}}]=0.
\end{align}
Eq.~\eqref{eq-cha-12}--\eqref{eq-cha-13} show that  $\E [X_t-X_s\, |\, \F^X_s]=0$, that is,  $\X$ is a martingale.  For contradiction suppose that $\X$ has independent increments which, in particular, implies that 
\begin{equation}\label{eq-345}
\E[e^{i\theta (X_2-X_1)+iuX_1}]=\E[e^{i\theta (X_2-X_1)}]\E[e^{iuX_1}]
\end{equation}
for all $\theta, u\in \R$. By  \eqref{eq-345} it follows that 
\begin{equation}
\E[e^{i \theta (B_2 -B_1) + i u B_{1}}]=\E[e^{i \theta (B_2 -B_1) }]+\E[e^{i u B_{1}}]-1,
\end{equation}
 and hence 
\begin{equation}\label{eq-cha-func-1}
e^{-\theta^2/2- u^2/2}=e^{-\theta^2/2}+e^{- u^2/2}-1.
\end{equation}
Letting $\theta,u\to \infty$ the right-hand side of \eqref{eq-cha-func-1} tends to $-1$ while the left-hand side is positive. 
Thus, $\X$ can  not have independent increments. 

\medskip

\begin{remark}\label{rem-HC}
Theorem~\ref{thm-HC} follows from \citet[Theorem~$4.1'$]{Hida} applied to the process $(\bar X_t)_{t\in \R}$ defined by $\bar X_t=X_t$ for $t\geq 0$ and $\bar X_t=0$ for $t<0$. Notice that  $N$ and $B_j$  in Theorem~\ref{thm-HC} are different from the corresponding terms given in \cite[Theorem~$4.1'$]{Hida}. Simply, we have added the continuous and discontinuous components together to get a simpler representation. \end{remark}

\medskip

\begin{proposition}\label{representable}
Let   ${\bf X}=(X_t)_{t\geq 0}$ be an infinitely divisible process which is either {\rm (a)}  symmetric and right-continuous in probability or {\rm (b)} mean zero and right-continuous in $L^1$.
Then $\bf X$ is representable, i.e., it can be written in the form \eqref{rep-X-62}.
\end{proposition}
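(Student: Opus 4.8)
The plan is to match the finite-dimensional distributions of $\X$ by a stochastic integral of the form \eqref{rep-X-62} and then to pass to an almost sure identity by a transfer argument. Since $\X$ is right-continuous in probability (the symmetric case) or in $L^1$ (the mean-zero case), its law is determined by the values on a fixed countable dense set $T_0\subset[0,\infty)$ together with $X_t=\lim_{q\downarrow t,\,q\in T_0}X_q$ in the appropriate topology. I would therefore first build the representation over $T_0$ and recover arbitrary $t\ge0$ by right limits at the end.

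First I would read off the generating triplet of $\X$ on path space. The law of $(X_q)_{q\in T_0}$ is an infinitely divisible measure on the standard Borel space $\R^{T_0}$, so its characteristic functional has L\'evy--Khintchine form with a Gaussian covariance functional $Q$, a $\sigma$-finite L\'evy measure $F$ on $\R^{T_0}\setminus\{0\}$ obtained from the consistent family of finite-dimensional L\'evy measures, and a drift. The Gaussian component, which inherits right-continuity in probability, is representable by Corollary~\ref{col-HC}\eqref{cor-1-yr} through a Gaussian random measure $\Lambda_G$, so it remains to represent the purely non-Gaussian part.

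For the jump part the key step is a polar (spectral) disintegration of $F$. Fixing a reference norm on $\R^{T_0}$ and a measurable section $V$ of the sphere, I would write $F$ as the image of a kernel $\kappa(dv)\,\rho_v(dx)$ under the scaling map $(v,x)\mapsto xv$, where $V$ is countably generated, $\kappa$ is $\sigma$-finite, and $\{\rho_v\}_{v\in V}$ is a measurable family of L\'evy measures on $\R$. I then take $\Lambda_P$ to be the infinitely divisible random measure on $\R\times V$ concentrated on the slice $\{0\}\times V$, specified via \eqref{Lambda} by $\sigma^2\equiv0$, the measure $\kappa$, and $\{\rho_v\}$ (the drift is fixed below), and I set $\phi(t,(s,v))=v(t)\1_{\{s=0\}}$, reading $v(t)$ as the right limit $\lim_{q\downarrow t}v(q)$ for general $t$. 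Because the single time atom sits at $s=0\in(-\infty,t]$ for every $t\ge0$, the causal domain in \eqref{rep-X-62} is automatically respected, and a Poisson atom $(v,x)$ contributes $x\,v(t)$, so that $\sum_i x_i v_i(t)$ reproduces the jump part of $X_t$.

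The remaining work is to verify the integrability conditions \eqref{i1}--\eqref{i2} and to pin down the drift, and this is exactly where the two standing hypotheses enter. With $B$ and $K$ from \eqref{B}--\eqref{K}, condition \eqref{i2} for $\phi(t,\cdot)$ reads $\int_V[v(t)^2\sigma^2(v)+\int_\R[xv(t)]^2\rho_v(dx)]\,\kappa(dv)<\infty$, which follows from finiteness of the one-dimensional L\'evy--Khintchine exponent of the genuine random variable $X_t$. In the symmetric case $B\equiv0$, so \eqref{i1} is automatic; in the mean-zero, $L^1$ case the first moment of $X_t$ exists and lets me center by expectations and choose the drift so that \eqref{i1} holds and $\E X_t=0$ is matched. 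A direct computation from \eqref{Lambda} then shows that $Y_t:=\int_{(-\infty,t]\times V}\phi(t,u)\,(\Lambda_G+\Lambda_P)(du)$ has the same finite-dimensional characteristics as $\X$ on $T_0$, hence the same law, and right-continuity (in probability or $L^1$) extends the identity to all $t\ge0$. Finally, a transfer argument produces, on an extension of the original space, a random measure $\Lambda=\Lambda_G+\Lambda_P$ for which $X_t=\int_{(-\infty,t]\times V}\phi(t,u)\,\Lambda(du)$ almost surely, as required. The hard part will be the measure-theoretic construction and polar disintegration of the path-space L\'evy measure $F$ into the measurable kernel form $\kappa(dv)\,\rho_v(dx)$ over a countably generated $V$; the integrability bookkeeping is comparatively routine once the two hypotheses dispose of the drift.
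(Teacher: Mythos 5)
Your overall strategy contains, at its core, exactly the one idea this proposition needs in the paper: the causal domain $(-\infty,t]\times V$ in \eqref{rep-X-62} costs nothing if the random measure is concentrated on a single time slice, and both you and the paper place all the randomness at $s=0$, so that $\{0\}\times V\subseteq(-\infty,t]\times V$ for every $t\geq 0$. The difference lies in how the non-causal spectral representation is obtained. The paper's proof is two lines long: it quotes \cite[Theorem~4.11]{Rosinski_spec}, which under precisely hypotheses (a) or (b) yields a countably generated space $(V,\V)$, an infinitely divisible random measure $\bar\Lambda$ on $V$ and deterministic functions $\bar\phi(t,\cdot)$ with $X_t=\int_V\bar\phi(t,v)\,\bar\Lambda(dv)$ a.s., and then extends $\bar\Lambda$ to $\R\times V$ by $\Lambda(A\times B)=\delta_0(A)\bar\Lambda(B)$. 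You instead reconstruct that cited theorem from scratch: the path-space L\'evy measure $F$ on $\R^{T_0}$, its polar disintegration into $\kappa(dv)\,\rho_v(dx)$, the Gaussian part via Corollary~\ref{col-HC}\eqref{cor-1-yr}, matching of finite-dimensional characteristics, and a transfer argument. This reconstruction is essentially the proof of Rajput and Rosi\'nski's theorem, so your route buys self-containedness at the price of redoing a substantial known result, while the paper's route reduces the proposition to a citation plus the slice trick.

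There is, however, one step in your reconstruction that is wrong as written: the prescription $\phi(t,(s,v))=v(t)\1_{\{s=0\}}$ with ``$v(t)$ read as the right limit $\lim_{q\downarrow t,\,q\in T_0}v(q)$'' for general $t$. Right-continuity in probability of $\X$ does \emph{not} make the spectral functions right-continuous pointwise: it only yields $\int_{\R^{T_0}}\big(|v(q)-v(t)|^2\wedge 1\big)\,F(dv)\to 0$ as $q\downarrow t$, i.e.\ convergence of the evaluations $v\mapsto v(q)$ in $F$-measure. When $F$ is non-atomic this is strictly weaker than pointwise convergence; a typewriter-type family (paths indexed by $\theta\in[0,1]$, with $F$ the image of Lebesgue measure and $v_\theta(q_n)$ running through indicators of dyadic intervals as $q_n\downarrow t$) gives an infinitely divisible process that is right-continuous in probability while $\lim_{q\downarrow t}v(q)$ exists for no path in the support of $F$. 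So your $\phi(t,\cdot)$ may simply fail to be defined for $t\notin T_0$. The repair is standard: since $q\mapsto v(q)$ is Cauchy in $F$-measure as $q\downarrow t$, define $\phi(t,\cdot)$ as the limit in $F$-measure of $\phi(q,\cdot)$; equivalently, use the fact that the family of stochastic integrals $\big\{\int f\,d\Lambda\big\}$ is closed under convergence in probability, so that $X_t=\lim_{q\downarrow t}\int\phi(q,\cdot)\,d\Lambda$ (limit in probability) is again of the form $\int\phi(t,\cdot)\,d\Lambda$ a.s. With that fix, and with the routine verifications you defer (the $\sigma$-finiteness of $F$, which follows from the finiteness of the one-dimensional L\'evy measures on $\{|x|>\epsilon\}$, and the measurability of the disintegration kernel), your argument goes through.
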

\begin{proof}
By \cite[Theorem~4.11]{Rosinski_spec}, under assumptions (a) or (b) there exist  an infinitely divisible random measure $\bar \Lambda=(\bar \Lambda(A))_{A\in \V}$ on a countable generated measurable space $(V,\V)$  and deterministic functions $\bar \phi(t,v)$ such that for all $t\geq 0$
\begin{equation}\label{eq1}
X_t=\int_V \bar \phi(t,v)\,\bar\Lambda(dv)\qquad \text{a.s.}
\end{equation}
Moreover, under (a) $\bar\Lambda$ is symmetric and $\bar\Lambda$ has mean zero under (b). 
We extend now $\bar \Lambda$ to an infinitely divisible random measure $\Lambda$ on $\R\times V$ by $\Lambda(A\times B):=\delta_0(A)\bar \Lambda(B)$, $A\in \mathcal{B}(\R)$, $B\in \V$.   Then \eqref{rep-X-62} holds with $\phi(t,u)=\bar\phi(t,v)$,  $u=(s,v)\in\R \times V$, $t\geq 0$.
\end{proof}

\medskip
\noindent
\textbf{Proof of Remark~\ref{cadlag}:} \  
Recall that  $\X=(X_t)_{t\geq 0}$ is a semimartingale relative $\Fi^\Lambda$ given by \eqref{def-X}, where $\Lambda$ satisfies the non-deterministic assumption \eqref{eq-kappa-42}, i.e.,
\begin{equation}\label{eq-kappa-42.1-wer}
\kappa\big(u\in \R\times V\! : \sigma^2(u)=0,\, \rho_u(\R)=0
\big)=0.
\end{equation}
 Then there exists a c\`adl\`ag modification of $\phi$.
More precisely, under \eqref{eq-kappa-42.1-wer} there exists a   mapping $\morf{\tilde \phi}{\R_+\times (\R\times V)}{\R}$ such that for all $u$, $t\mapsto \tilde \phi(t,u)$ is c\`adl\`ag  and for all $t\geq 0$,  $\phi(t,\cdot)=\tilde \phi(t,\cdot)$  $\kappa$-a.e. In fact, for a c\`adl\`ag process $\X$ on the form \eqref{def-X} there exists a function $\morf{ \phi_1}{\R_+\times (\R\times V)}{\R}$ such that $\phi(\cdot,u)$ is c\`adl\`ag and for all $t\geq 0$, $\phi(t,u)=\phi_1(t,u)$  for $\kappa$-a.e.\ $u$ with $\rho_u(\R)>0$, by similar arguments  as in  \citet[Theorem~4.1 and p.~86]{Rosinski_sum_rep}. On the other hand, if  $\X$ is a symmetric Gaussian semimartingale  then  there exists a function $\morf{ \phi_2}{\R_+\times (\R\times V)}{\R}$ such that $\phi(\cdot,u)$ is c\`adl\`ag and for $t\geq 0$, $\phi(t,u)=\phi_2(t,u)$  for $\kappa$-a.e.\ $u$ with $\sigma^2(u)>0$, cf.\ \citet[Theorem~4.6]{Andreas1}.  Hence by the symmetrization argument  used in  Case 3 in the proof of Theorem~\ref{thm1}  we obtain the c\`adl\`ag modification from the above  two  cases. 
\qed

%%%%%%%%%%%%%%%%
\section{Two lemmas}\label{two}
%%%%%%%%%%%%%%%

In  this appendix we collect two results which are more or less
 well-known, but for which  we have not been able to find a reference.

\begin{lemma}\label{lem-fixed-dis-234}
Let  ${\bf Y}=(Y_t)_{t\geq 0}$ be a c\`adl\`ag process  with independent increments with respect to some filtration $\Fi$ and let $J=\{t\geq 0: \P(\Delta Y_t\neq 0)>0\}$ be the set of fixed discontinuities of $\bf Y$. Then there exists totally inaccessible   stopping times $(\tau_k)_{k\in \N}$  such that $\{\Delta \Y\neq 0\}= (\Omega\times J)\cup(\cup_{k\in \N} [\tau_k])$ up to evanescent. 
\end{lemma}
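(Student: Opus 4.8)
The plan is to reduce the statement to two structural features of processes with independent increments: that the fixed-discontinuity set $J$ is countable, and that off $J$ the process is quasi-left-continuous, so that all of its remaining jumps sit on totally inaccessible stopping times. First I would record that for each fixed $t$ the jump $\Delta Y_t$ is independent of $\F_{t-}$: indeed $Y_t-Y_s$ is independent of $\F_s$ for every $s<t$ by hypothesis, and letting $s\uparrow t$ shows that $\Delta Y_t$ is independent of $\bigvee_{s<t}\F_s=\F_{t-}$. The same reasoning gives that for $t_1<t_2<\cdots$ the jumps $\Delta Y_{t_1},\Delta Y_{t_2},\dots$ are mutually independent. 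If $J$ were uncountable, then for some $\epsilon,\delta>0$ and some $T>0$ the set $\{t\le T:\P(\abs{\Delta Y_t}>\epsilon)>\delta\}$ would be infinite; choosing countably many such times and applying the second Borel--Cantelli lemma, legitimate by the independence just noted, would force $\abs{\Delta Y_t}>\epsilon$ for infinitely many $t\le T$ almost surely, contradicting the \ca\ property. Hence $J$ is countable.

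Next, since $\Y$ is \ca\ and adapted, $\{\Delta\Y\neq 0\}$ is a thin optional set and can therefore be exhausted by the graphs of countably many stopping times; each such stopping time splits, on disjoint sets of its stopping-time $\sigma$-algebra, into an accessible and a totally inaccessible part (classical optional-section theory, cf.\ \cite[I, 2.24]{Jacod_S}). Collecting the totally inaccessible pieces, discarding evanescent parts, and keeping only the restrictions on which $\Y$ actually jumps, I obtain a sequence $(\tau_k)_{k\in\N}$ of totally inaccessible stopping times with $\cup_k[\tau_k]\subseteq\{\Delta\Y\neq 0\}$ up to evanescence. It then remains to show that the accessible part of the jump set is contained in $\Omega\times J$ up to evanescence; together with the inclusion just noted this yields the claimed identity $\{\Delta\Y\neq 0\}=(\Omega\times J)\cup(\cup_k[\tau_k])$.

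This last point is the crux, and it is where independence of increments is used essentially, since for a general \ca\ process accessible jumps need not concentrate on fixed times. The accessible part is covered by graphs of predictable times, so it suffices to treat a single predictable time $\pi$; writing $\pi=\pi|_{\{\pi\in J\}}\wedge\pi|_{\{\pi\notin J\}}$, the first restriction takes only the countably many deterministic values in $J$, hence has graph inside $\Omega\times J$, while the second avoids $J$. For the second restriction I would use that $\Y$, having no fixed discontinuity off $J$, is stochastically continuous there, so that the integer-valued jump measure $\mu$ of $\Y$ is an extended Poisson random measure (\cite[II, 1.20]{Jacod_S}, using that $\Y$ has independent increments) whose deterministic compensator $\nu$ satisfies $\nu(\{t\}\times\R)=0$ for $t\notin J$. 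Quasi-left-continuity of $\Y$ off $J$ then gives $\Delta Y_\pi=0$ a.s.\ on $\{\pi<\infty,\ \pi\notin J\}$, so the accessible jumps are absorbed into $\Omega\times J$. I expect the main obstacle to be precisely this quasi-left-continuity off $J$; the cleanest route appears to be through the deterministic compensator of the jump measure rather than a hands-on announcing-sequence computation.
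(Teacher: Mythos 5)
Your proposal is correct and takes essentially the same route as the paper's proof: reduce, via the exhaustion of the thin jump set $\{\Delta \Y\neq 0\}$ by graphs of stopping times and their decomposition into accessible and totally inaccessible parts, to showing that a predictable time $\pi$ carries no jumps on $\{\pi\notin J\}$, then restrict $\pi$ to that set (predictable because $J$ is countable and $\{\pi\in J\}\in\F_{\pi-}$) and kill the remaining jumps using the deterministic compensator of the jump measure of a process with independent increments. The only differences are cosmetic: you supply a proof that $J$ is countable (which the paper uses without proof, and which actually holds for any c\`adl\`ag process by a reverse-Fatou argument, so the independence-based Borel--Cantelli step is not needed), and you phrase the key independent-increments input as the extended-Poisson/deterministic-compensator property of the jump measure rather than citing, as the paper does, that the predictable support of $\{\Delta\Y\neq 0\}$ is $\Omega\times J$.
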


\begin{proof}
When $\Y$ is continuous in probability (i.e.\  $J=\emptyset$), Lemma~\ref{lem-fixed-dis-234}   follows by \cite[II, 5.12 and I, 2.26]{Jacod_S}. The general case may be shown as follows: By the decomposition theorem of stopping times, see  \cite[I, 1.32 and I, 2.22]{Jacod_S}, it is enough to show that 
for any predictable stopping time $S$ we have 
\begin{equation}\label{eq-jump-S-12}
\P(\Delta Y_{S}\neq 0, S\in J^c)=0.
\end{equation}
By the independent increments of $\Y$ and \cite[II, 1.17]{Jacod_S} it follows that the  predictable support of the random set $\{\Delta \Y\neq 0\}$ is  $\Omega\times J$. 
For a given   predictable stopping time $S$ let $A=\{S\notin J\}$ and $S_A=S\1_{A}+\infty\1_{A^c}$. Then $S$ is $\F_{S-}$-measurable since it is a stopping time, cf. \cite[I, 1.14]{Jacod_S}. We have  $A\in \F_{S-}$ since $A^c=\cup_{t\in J} \{S=t\}$ and $J$ is coutable. Thus by \cite[I, 2.10]{Jacod_S}, $S_A$ is a predictable stopping time.  Moreover, $(\Omega\times J)\cap [S_A]=\emptyset$ which implies that $\{\Delta \Y\neq 0\}\cap [S_A]$ is evanescent, see \cite[I, 2.33]{Jacod_S}, which is equivalent to   \eqref{eq-jump-S-12}. 
\end{proof}

\begin{lemma}\label{lem-cad-mod}
Let $\X=(X_t)_{t\geq 0}$ be an infinitely divisible process with independent increments. If $\X$ is c\`adl\`ag in probability then $\X$ has a c\`adl\`ag modification. 
\end{lemma}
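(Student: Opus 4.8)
The plan is to exploit the infinite divisibility to attach to $X$ a family of bounded martingales indexed by a frequency $u$, regularize these by Doob's theorem, and then reconstruct a \ca\ version of $X$ itself by Fourier inversion. First I would replace the natural filtration by its usual augmentation $\Fi$; any modification constructed over $\Fi$ is still a modification of $X$, and $X$ remains adapted. The key structural observation is that, since each $X_t$ is infinitely divisible, its characteristic function $\varphi_t(u):=\E[e^{iuX_t}]$ never vanishes, while right-continuity in probability makes $t\mapsto\varphi_t(u)$ right-continuous with left limits, hence bounded away from $0$ on compacts. Using the independent increments one checks $\E[e^{iuX_t}\mid\F_s]=e^{iuX_s}\varphi_{s,t}(u)$ for $s\le t$, where $\varphi_{s,t}(u)=\E[e^{iu(X_t-X_s)}]=\varphi_t(u)/\varphi_s(u)$; consequently
\[
M^u_t:=\frac{e^{iuX_t}}{\varphi_t(u)},\qquad t\ge0,
\]
is a complex $\Fi$-martingale, bounded on every compact time interval, for each $u\in\R$.

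Next I would apply the classical regularization theorem (Doob's upcrossing inequality applied to $\Re M^u$ and $\Im M^u$) to each $M^u$: for every fixed $u$, almost surely the right and left limits of $s\mapsto e^{iuX_s}$ along the rationals exist at every $t\ge0$, since $\varphi(u)$ is continuous and non-vanishing. A Fubini argument over $(u,\omega)$ then upgrades this to the statement that, almost surely, for Lebesgue-a.e.\ $u\in\R$ the limits $\lim_{s\downarrow t,\,s\in\mathbb{Q}}e^{iuX_s}$ and $\lim_{s\uparrow t,\,s\in\mathbb{Q}}e^{iuX_s}$ exist for every $t$. Because $X$ is right-continuous in probability, $e^{iuX_s}\to e^{iuX_t}$ in probability as $s\downarrow t$, so the almost-sure right limit must coincide with $e^{iuX_t}$ for a.e.\ $u$.

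The reconstruction is the delicate step, and the main obstacle is to rule out explosion of $X_s$ to $\pm\infty$ along the approaching rationals: convergence of $e^{iuX_s}$ for a dense set of $u$ alone does not force convergence of $X_s$ in $\R$. I would resolve this by Gaussian-weighted Fourier inversion. From $\int_\R e^{iuz}e^{-u^2/2}\,du=\sqrt{2\pi}\,e^{-z^2/2}$ and its first-moment counterpart $\int_\R iu\,e^{iuz}e^{-u^2/2}\,du=\sqrt{2\pi}\,z\,e^{-z^2/2}$, dominated convergence applied to the a.e.-$u$ convergence of $e^{iuX_s}$ yields convergence of both $e^{-X_s^2/2}$ and $X_s e^{-X_s^2/2}$ along rationals; since the first limit is nonzero, these together force $X_s$ to converge to a finite limit, which on the right equals $X_t$.

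Finally I would set $\tilde X_t:=\lim_{s\downarrow t,\,s\in\mathbb{Q}}X_s$. By construction $\tilde X$ is right-continuous, it has left limits by applying the same inversion to the left one-sided limits, and it is a modification of $X$ because its right limit agrees with $X_t$ almost surely for each $t$. This produces the desired \ca\ modification. I expect the martingale identity and the regularization to be routine given the non-vanishing of $\varphi$, and the measurability bookkeeping in the Fubini step to be standard; the genuinely substantive point is the inversion argument that simultaneously identifies the limit and excludes escape to infinity.
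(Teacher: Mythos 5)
Your strategy --- the exponential martingales $M^u_t=e^{iuX_t}/\varphi_t(u)$ (well defined because characteristic functions of infinitely divisible laws never vanish), Doob regularization, Fubini in $u$, and Fourier reconstruction --- is exactly the route the paper takes: its proof of this lemma is a one-line reference to \cite[Theorem~15.1]{Kallenberg}, whose argument is the one you outline. The martingale identity, the regularization, the Fubini bookkeeping, and the per-$t$ identification $\tilde X_t=X_t$ a.s.\ (from right-continuity in probability) are all sound. One small point you pass over: c\`adl\`ag plus non-vanishing does \emph{not} by itself make $t\mapsto\varphi_t(u)$ bounded away from zero on compacts; you also need $\varphi_{t-}(u)\neq 0$, which holds because the left limit in probability $X_{t-}$ is again infinitely divisible (a weak limit of infinitely divisible laws), so this is easily repaired.

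The genuine gap is at the step you yourself call substantive, the exclusion of escape to $\pm\infty$. Your two test functions give that $e^{-X_s^2/2}\to c_t$ and $X_se^{-X_s^2/2}\to d_t$ along rationals $s\downarrow t$, for all $t$ simultaneously, a.s.; and if $c_t\neq 0$ these do force finite convergence of $X_s$. But the assertion ``the first limit is nonzero'' is precisely the non-explosion statement to be proved: $c_t=0$ exactly when $X_s^2\to\infty$, and in that case both of your limits exist (both are $0$), so the dichotomy is silent. Nor can you get $c_t>0$ from the identification $\lim_s e^{iuX_s}=e^{iuX_t}$, because that holds a.s.\ \emph{for each fixed} $t$, with a null set depending on $t$, whereas defining $\tilde X$ requires the limit to exist at \emph{all} $t$, including exceptional random times where no fixed-$t$ argument applies. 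The repair stays inside your framework but needs more test functions than the two moments: on the a.s.\ Fubini event the limit $g_t(u)$ has modulus $1$ for a.e.\ $u$; if $|X_{s_n}|\to\infty$ along rationals $s_n\downarrow t$, then for every $b\in\R$ one has $\int_\R e^{iu(X_{s_n}+b)}e^{-u^2/2}\,du=\sqrt{2\pi}\,e^{-(X_{s_n}+b)^2/2}\to 0$, while dominated convergence makes the same integrals converge to the Fourier transform of $g_t(u)e^{-u^2/2}$ at $b$; hence $g_te^{-u^2/2}=0$ a.e., contradicting $|g_t|=1$ a.e. (This is the Riemann--Lebesgue lemma; testing against all modulated Gaussians, equivalently all of $L^1$, is essential --- countably many or two test functions cannot rule out escape, as sequences like $x_n=2\pi n!$ show for rational frequencies.) Alternatively, Ottaviani's maximal inequality for independent increments gives $\sup_{s\in\mathbb{Q}\cap[0,N]}|X_s|<\infty$ a.s.\ directly. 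With that replacement (and the harmless sign correction $\int_\R iu\,e^{iuz}e^{-u^2/2}\,du=-\sqrt{2\pi}\,z\,e^{-z^2/2}$), your proof is complete.
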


Using that the characteristic function of any infinitely divisible random variable is non-zero everywhere the proof of Lemma~\ref{lem-cad-mod}  follows the  lines of  the proof of  \citet[Theorem~15.1]{Kallenberg}.

\bigskip

\noindent 
{\bf Acknowledgment.} 
Jan Rosi\'nski's research was partially supported by a grant \#281440 from the Simons Foundation.

\bibliographystyle{chicago}

\end{document}